\documentclass[11pt]{amsart}

\usepackage[T1]{fontenc}
\usepackage[utf8]{inputenc}
\usepackage{lmodern}
\usepackage{amsmath,amssymb,amsthm,mathtools}
\usepackage{enumitem}
\usepackage[
 pdfauthor={},
 pdftitle={},
 pdfcreator={},
 colorlinks=true,
 linkcolor=blue,
 citecolor=blue,
 urlcolor=blue
]{hyperref}

\theoremstyle{plain}
\newtheorem{theorem}{Theorem}[section]
\newtheorem{proposition}[theorem]{Proposition}
\newtheorem{corollary}[theorem]{Corollary}
\newtheorem{lemma}[theorem]{Lemma}

\theoremstyle{definition}
\newtheorem{definition}[theorem]{Definition}
\newtheorem{example}[theorem]{Example}

\theoremstyle{remark}
\newtheorem{remark}[theorem]{Remark}

\DeclareMathOperator{\Spec}{Spec}
\DeclareMathOperator{\Supp}{Supp}

\DeclareMathOperator{\Ass}{Ass}
\DeclareMathOperator{\Hom}{Hom}
\DeclareMathOperator{\End}{End}

\DeclareMathOperator{\Tr}{Tr}
\DeclareMathOperator{\Max}{Max}
\DeclareMathOperator{\Soc}{Soc}

\newcommand{\ann}{\operatorname{ann}}

\newcommand{\FGId}{\operatorname{FId}}

\newcommand{\GammaSD}{\Gamma_{\mathrm{sd}}}

\numberwithin{equation}{section}

\title[Annihilator Multiplication Modules]
{Annihilator Multiplication Modules: Ring Characterizations and Constructions}

\author[Kim]{Hwankoo Kim}
\address{Division of Computer Engineering, Hoseo University, Asan, Republic of Korea}
\email{hkkim@hoseo.edu}

\author[Ko\c{c}]{Suat Ko\c{c}}
\address{Department of Mathematics, Marmara University, Istanbul, Turkiye}
\email{suat.koc@marmara.edu.tr}

\subjclass[2020]{13C05, 13C13, 13E10, 13A15, 13B30, 05C25}
\keywords{annihilator multiplication module, quasi-Frobenius ring, principal ideal ring, square-zero ring, associated prime, support-disjointness graph, amalgamated module}
\date{September 11, 2026}

\begin{document}

\begin{abstract}
An $A$-module $E$ is annihilator multiplication if the annihilator of each
element equals that of $IE$ for a finitely generated ideal $I$.
We characterize rings for which every faithful module has this property as
the commutative quasi-Frobenius rings, and rings for which every module has
the property as the Artinian principal ideal rings. Both characterizations
reduce to two-generated modules. Over a reduced ring with finitely many
minimal primes, faithful annihilator multiplication modules are exactly the
regular-torsion-free modules, and principal ideals suffice in the definition.
An ascending chain condition on annihilator ideals gives finite detection
of the module annihilator, yielding localization and graph rigidity results.
We also prove equality of associated primes with those of the faithful
quotient and establish projective tensor and trace characterizations.
Over local square-zero rings, the property is equivalent to nonsingularity
of a bilinear multiplication map. A projective dimension argument gives a
sharp length bound, attained by explicit faithful indecomposable
nonprojective modules whose endomorphism rings are computed.
Every cyclic submodule of these examples embeds in the ring, although the
whole module is not torsionless.
Support, hereditary torsion, and amalgamation criteria connect these
structural results with further module constructions.
\end{abstract}

\maketitle

\section{Introduction and basic reductions}

Annihilators give a direct way to measure how a ring acts on a module. For an
$A$-module $E$ and an element $e\in E$, the ideal $\ann_A(e)$ records all ring
elements that kill $e$. The class of annihilator multiplication modules asks
for these elementwise annihilators to be represented by annihilators of
submodules of the special form $IE$, where $I$ is a finitely generated ideal of
the base ring. Thus the condition replaces arbitrary cyclic data inside $E$ by
finitely generated ideal data in $A$.

The concept was introduced in the study of Baer modules
\cite{JTK22} and investigated systematically in \cite{KocAMM}.
A central question is how a condition on individual element annihilators
constrains the ring, without imposing finite generation on the module.
The cyclic embedding criterion of Proposition~\ref{prop:cyclic-embedding}
provides a useful starting point: over the faithful quotient, every cyclic
submodule must embed in some finite free module.

The first main classification, Theorem~\ref{thm:faithful-qf}, removes the
reducedness assumption from \cite[Theorem~5]{KocAMM}.
Every faithful module is annihilator multiplication precisely over a
commutative quasi-Frobenius ring. Requiring the condition for all modules
is strictly stronger: Theorem~\ref{thm:all-modules-pir} identifies the
Artinian principal ideal rings. In each case it suffices to test particular
two-generated modules. The double-annihilator characterization of
quasi-Frobenius rings is classical \cite{Lam99, Storrer69}; the results here
establish the annihilator multiplication characterizations and their
consequences for closure under module constructions.

Two further structural results treat modules individually.
Theorem~\ref{thm:finite-detection-acc} shows that an ascending chain condition
on annihilator ideals of the faithful quotient forces a finitely generated
submodule to have the same annihilator as the entire module.
This gives a localization theorem without finite generation of the ambient
module and the exact Noetherian local-to-global criterion of
Theorem~\ref{thm:noetherian-local-global}.
Theorem~\ref{thm:reduced-finite-min} identifies the faithful modules over
reduced rings with finitely many minimal primes: they are precisely the
modules on which regular elements act injectively. In this setting,
principal ideals suffice to represent all element annihilators.

The associated-prime theorem from the preceding framework is also
strengthened. Ko\c{c} proved $\Ass_A(E)\subseteq\Ass(A)$ for faithful
annihilator multiplication modules, and equality when $E$ contains an
element with zero annihilator \cite[Theorem~6]{KocAMM}.
Theorem~\ref{thm:associated-primes} proves that faithfulness alone suffices,
answering \cite[Question~1]{KocAMM}. We retain faithfulness; it is the
additional existence of an element with zero annihilator that is unnecessary.
For projective constructions, Theorems~\ref{thm:hom-trace-projective}
and \ref{thm:tensor-trace} identify the relevant property with that of an
idempotent trace summand. In particular, tensoring by any finitely
generated projective module preserves the property.

The square-zero results supply examples beyond these projective constructions.
Theorem~\ref{thm:squarezero-bilinear} turns annihilator multiplication into
nonsingularity of a bilinear map from the maximal ideal and the module
modulo its socle. Applying the classical projective dimension theorem
produces a lower bound on the socle dimension.
Theorem~\ref{thm:sharp-indecomposable-family} constructs modules attaining
the bound, proves that they are faithful, indecomposable, and nonprojective,
and computes their endomorphism rings.
Corollary~\ref{cor:cyclic-not-torsionless} further shows that every
cyclic submodule can embed in the ring even when homomorphisms from the
whole module to the ring fail to separate its elements.

Section~\ref{sec:structural} establishes the ring and module classifications.
Section~\ref{sec:support} treats support, projective constructions, and
associated primes. Section~\ref{sec:graph} studies the support-disjointness
graph, the annihilator semilattice, and hereditary torsion theories.
The distinction between annihilators of finitely generated submodules and
annihilators of finitely generated ideal multiples is retained;
Proposition~\ref{prop:lattice-quotient} gives the exact relationship, while
Theorem~\ref{thm:graph-rigidity} and the finite-minimal-prime formulas describe
its graph-theoretic consequences. Section~\ref{sec:amalgamation} treats
amalgamations and the square-zero constructions.

Throughout, rings are commutative with identity, ring homomorphisms preserve
the identity, and modules are unital. The zero module is allowed in
module constructions and satisfies the annihilator multiplication condition.
Whenever prime modules or support-disjointness graphs are considered, the
module is explicitly assumed to be nonzero. For an $A$-module $M$, write
$\Supp_A(M)=\{\mathfrak p\in\Spec(A):M_{\mathfrak p}\ne0\}$ and
$\ann_A(X)=\{a\in A:aX=0\}$ for a subset $X$ of $M$.
For ideals $B,I$ of $A$, put $(B:I)=\{a\in A:aI\subseteq B\}$.
We repeatedly use the elementary identity
\begin{equation}\label{eq:colon-annihilator}
\ann_A(IM)=(\ann_A(M):I).
\end{equation}
In particular, modules with the same annihilator have ideal multiples with
the same annihilator, for every ideal $I$. Subscripts are omitted when the
base ring is clear.

\begin{definition} \cite{KocAMM}
An $A$-module $E$ is called an \emph{annihilator multiplication module} if, for
every $e\in E$, there exists a finitely generated ideal $I$ of $A$ such that
$\ann_A(e)=\ann_A(IE).$
\end{definition}

We begin with three standard reductions that will be used throughout. The first
extends the defining condition from elements to finitely generated submodules;
see \cite[Proposition 1(i)]{KocAMM}.

\begin{lemma}
\label{lem:fg-submodule}
Let $E$ be an annihilator multiplication $A$-module. If $L$ is a finitely
generated submodule of $E$, then there exists a finitely generated ideal $I$ of
$A$ such that $\ann_A(L)=\ann_A(IE).$
\end{lemma}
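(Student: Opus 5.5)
Write $L=Ae_1+\cdots+Ae_n$. Then $\ann_A(L)=\bigcap_{i=1}^n\ann_A(e_i)$. By the definition, for each $i$ there is a finitely generated ideal $I_i$ with $\ann_A(e_i)=\ann_A(I_iE)$. Take $I=I_1+\cdots+I_n$. This ideal is finitely generated, since it is generated by the union of the finite generating sets of the $I_i$.

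The proof then rests on one identity. Since $IE=I_1E+\cdots+I_nE$, an element $a$ kills $IE$ exactly when it kills each $I_iE$. Hence
\[
\ann_A(IE)=\bigcap_{i=1}^n\ann_A(I_iE)=\bigcap_{i=1}^n\ann_A(e_i)=\ann_A(L).
\]
One could also argue through \eqref{eq:colon-annihilator}, using $(\ann_A(E):\sum I_i)=\bigcap_i(\ann_A(E):I_i)$, but the direct computation is simpler.

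There is no real obstacle here. The only point needing care is that annihilators turn sums of submodules into intersections, both on the side of $L$ and on the side of $IE$. That is the step I would write out explicitly. If $L=0$, take $I=0$; then both sides equal $A$.
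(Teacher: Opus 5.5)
Your proof is correct and follows the paper's argument essentially verbatim. You take $I=I_1+\cdots+I_r$, use $IE=\sum_i I_iE$ to turn $\ann_A(IE)$ into $\bigcap_i\ann_A(I_iE)=\bigcap_i\ann_A(x_i)=\ann_A(L)$, and set $I=0$ when $L=0$, exactly as the paper does.
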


\begin{proof}
Write $L=Ax_1+\cdots+Ax_r$ and choose finitely generated ideals $I_i$ with
$\ann_A(x_i)=\ann_A(I_iE)$. The ideal $I=I_1+\cdots+I_r$ is finitely
generated and
\[
\ann_A(L)=\bigcap_{i=1}^r\ann_A(x_i)
=\bigcap_{i=1}^r\ann_A(I_iE)=\ann_A(IE).
\]
For $L=0$, take $I=0$.
\end{proof}

The class is also stable under finite direct powers; see
\cite[Proposition~5]{KocAMM}.

\begin{lemma}
\label{lem:direct-powers}
If $E$ is an annihilator multiplication $A$-module, then $E^n$ is an
annihilator multiplication $A$-module for every integer $n\geq 1$.
\end{lemma}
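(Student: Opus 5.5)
Since an element of $E^n$ is a tuple, we reduce it to the finitely generated submodule of $E$ spanned by its coordinates and then apply Lemma~\ref{lem:fg-submodule}. Let $x=(x_1,\dots,x_n)\in E^n$ and put $L=Ax_1+\cdots+Ax_n\subseteq E$. First we check that
\[
\ann_A(x)=\bigcap_{i=1}^n\ann_A(x_i)=\ann_A(L).
\]
This holds because $a x=0$ in $E^n$ exactly when $ax_i=0$ for every $i$. By Lemma~\ref{lem:fg-submodule} there is a finitely generated ideal $I$ of $A$ with $\ann_A(L)=\ann_A(IE)$.

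Next we compare $IE$ with $I(E^n)$. Multiplication by ideals is computed coordinatewise, so $I(E^n)=(IE)^n$. Its annihilator is therefore
\[
\ann_A\bigl((IE)^n\bigr)=\bigcap_{i=1}^n\ann_A(IE)=\ann_A(IE).
\]
Alternatively, one can use \eqref{eq:colon-annihilator}: $E$ and $E^n$ have the same annihilator, so their ideal multiples do too. Chaining the equalities gives $\ann_A(x)=\ann_A(I E^n)$ with $I$ finitely generated, which is the defining condition for $E^n$.

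There is no real obstacle here. The only point needing care is the identity $I(E^n)=(IE)^n$. The inclusion $\subseteq$ is immediate. For $\supseteq$, note that a tuple with a single nonzero coordinate $\sum a_j e_j$ is $\sum a_j(0,\dots,e_j,\dots,0)$, and then sum such tuples. In any case only the equality of annihilators is actually needed, and \eqref{eq:colon-annihilator} gives that directly. The case $x=0$ is covered by taking $I=0$.
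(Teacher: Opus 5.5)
Your proposal is correct and is essentially the paper's argument: the paper picks $I_i$ with $\ann_A(x_i)=\ann_A(I_iE)$ and sets $I=I_1+\cdots+I_n$, which is just Lemma~\ref{lem:fg-submodule} written out for $L=Ax_1+\cdots+Ax_n$. It then concludes $\ann_A(x)=\ann_A(IE)=\ann_A(IE^n)$, exactly as you do.
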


\begin{proof}
For $x=(x_1,\ldots,x_n)\in E^n$, choose finitely generated ideals $I_i$ with
$\ann_A(x_i)=\ann_A(I_iE)$ and put $I=I_1+\cdots+I_n$.
Then $\ann_A(x)=\ann_A(IE)=\ann_A(I E^n)$, as required.
\end{proof}

The next descent principle will be useful when an annihilator multiplication
module is replaced by a submodule with the same annihilator; see
\cite[Corollary 2(ii)]{KocAMM}.

\begin{lemma}
\label{lem:equal-ann-submodule}
Let $E$ be an annihilator multiplication $A$-module, and let $L$ be a submodule
of $E$ such that $\ann_A(L)=\ann_A(E).$
Then $L$ is an annihilator multiplication $A$-module.
\end{lemma}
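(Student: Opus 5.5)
The plan is to reduce the condition for $L$ to the condition for $E$, using the identity \eqref{eq:colon-annihilator}. Fix $e\in L$. Since $e$ is also an element of $E$ and $E$ is annihilator multiplication, there is a finitely generated ideal $I$ of $A$ with
\[
\ann_A(e)=\ann_A(IE).
\]
The same ideal $I$ should witness the defining condition for $e$ inside $L$. So the task is to show that $\ann_A(IL)=\ann_A(IE)$.

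For that equality, apply \eqref{eq:colon-annihilator} twice:
\[
\ann_A(IL)=(\ann_A(L):I)=(\ann_A(E):I)=\ann_A(IE).
\]
The middle equality is exactly the hypothesis $\ann_A(L)=\ann_A(E)$; this is the remark after \eqref{eq:colon-annihilator} that modules with the same annihilator have ideal multiples with the same annihilator. Combining the two displays gives $\ann_A(e)=\ann_A(IL)$ with $I$ finitely generated, as the definition requires.

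The one point to check is that \eqref{eq:colon-annihilator} holds for an arbitrary module $M$ and an arbitrary ideal $I$. This is immediate: $a(IM)=0$ holds if and only if $(aI)M=0$, which holds if and only if $aI\subseteq\ann_A(M)$. No finiteness of $L$ is needed. The case $L=0$ is also harmless, since then $E=0$ and the zero module satisfies the condition trivially. I therefore expect no genuine obstacle; the argument is essentially a one-line application of \eqref{eq:colon-annihilator}.
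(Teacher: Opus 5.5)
Your proposal is correct and follows the paper's own proof: you take the ideal $I$ witnessing $\ann_A(e)=\ann_A(IE)$ in $E$ and use \eqref{eq:colon-annihilator} together with $\ann_A(L)=\ann_A(E)$ to get $\ann_A(IL)=\ann_A(IE)$. Your side remark on $L=0$ is also accurate, although the general argument already covers that case.
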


\begin{proof}
For $x\in L$, choose a finitely generated ideal $I$ with
$\ann_A(x)=\ann_A(IE)$. Since $\ann_A(E)=\ann_A(L)$,
\eqref{eq:colon-annihilator} gives $\ann_A(IE)=\ann_A(IL)$.
\end{proof}

We shall also use the following elementary change-of-rings observation. It is
included to make the later reduction in the trivial amalgamated case explicit.

\begin{lemma}
\label{lem:quotient-change}
Let $\pi:R\to A$ be a surjective homomorphism of commutative rings, and let $E$
be an $A$-module, viewed as an $R$-module through $\pi$. Then $E$ is an
annihilator multiplication $A$-module if and only if it is an annihilator
multiplication $R$-module.
\end{lemma}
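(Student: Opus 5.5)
The plan is to compare annihilators over $R$ and over $A$ through $\pi$, and to move finitely generated ideals back and forth between the two rings. Put $K=\ker\pi$. Since $R$ acts on $E$ through $\pi$, for every subset $X\subseteq E$ we have
\[
\ann_R(X)=\pi^{-1}\bigl(\ann_A(X)\bigr),
\]
and every such annihilator contains $K$. Also, for an ideal $J$ of $R$ we have $JE=\pi(J)E$ as subsets of $E$: the products $je$ and $\pi(j)e$ coincide. Because $\pi$ is surjective, the ideal $\pi(J)$ of $A$ is finitely generated whenever $J$ is. In the other direction, each finitely generated ideal $I$ of $A$ equals $\pi(J)$ for a finitely generated ideal $J$ of $R$: lift each generator to $R$ and let $J$ be the ideal generated by the lifts. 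Then $JE=IE$.

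Assume first that $E$ is an annihilator multiplication $A$-module, and let $e\in E$. Choose a finitely generated ideal $I$ of $A$ with $\ann_A(e)=\ann_A(IE)$, and take $J$ as above, so that $JE=IE$. Applying $\pi^{-1}$ to both sides gives
\[
\ann_R(e)=\pi^{-1}\bigl(\ann_A(e)\bigr)=\pi^{-1}\bigl(\ann_A(IE)\bigr)=\ann_R(JE).
\]

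Conversely, assume $E$ is an annihilator multiplication $R$-module, and let $e\in E$. Choose a finitely generated ideal $J$ of $R$ with $\ann_R(e)=\ann_R(JE)$, and set $I=\pi(J)$. Then $I$ is finitely generated and $IE=JE$, so $\pi^{-1}(\ann_A(e))=\pi^{-1}(\ann_A(IE))$. Since $\pi$ is surjective, $\pi(\pi^{-1}(B))=B$ for every ideal $B$ of $A$. Applying $\pi$ therefore yields $\ann_A(e)=\ann_A(IE)$.

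No step is a genuine obstacle. The only points that need care are the surjectivity of $\pi$, which is used both to lift generators and to recover $B$ from $\pi^{-1}(B)$, and the equality $JE=\pi(J)E$.
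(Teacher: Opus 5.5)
Your proposal is correct and follows essentially the same route as the paper's proof. Both arguments use the identity $\ann_R(X)=\pi^{-1}(\ann_A(X))$ and the equality $JE=\pi(J)E$. Both lift generators in one direction and push the ideal forward in the other, and both invoke surjectivity to recover $\ann_A(e)=\ann_A(IE)$ from equality of inverse images.
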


\begin{proof}
For every subset $X\subseteq E$ one has $\ann_R(X)=\pi^{-1}(\ann_A(X)).$
Assume first that $E$ is annihilator multiplication over $A$. Given $x\in E$,
choose a finitely generated ideal $I=(a_1,\ldots,a_n)$ of $A$ such that
$\ann_A(x)=\ann_A(IE)$. Let $r_i\in R$ be a lift of $a_i$, and set
$H=(r_1,\ldots,r_n)$. Then $H$ is finitely generated, $HE=IE$, and hence
\[
\ann_R(x)=\pi^{-1}(\ann_A(x))= \pi^{-1}(\ann_A(IE))=\ann_R(HE).
\]
Thus $E$ is annihilator multiplication over $R$.

Conversely, assume that $E$ is annihilator multiplication over $R$. For
$x\in E$, choose a finitely generated ideal $H$ of $R$ with
$\ann_R(x)=\ann_R(HE)$. Put $I=\pi(H)$, which is a finitely generated ideal of
$A$. Since $HE=IE$, we have $\pi^{-1}(\ann_A(x))=\pi^{-1}(\ann_A(IE)).$
The map $\pi$ is surjective, so equality of inverse images implies
$\ann_A(x)=\ann_A(IE)$. Therefore $E$ is annihilator multiplication over $A$.
\end{proof}

\section{Cyclic embeddings and ring-theoretic characterizations}
\label{sec:structural}

Passing to the faithful quotient separates the annihilator multiplication
condition from the particular presentation of the module. This section
uses that reduction to obtain cyclic embedding criteria, finite detection
of the module annihilator, and two different classifications of rings.
The distinction between faithful modules and arbitrary modules is essential.

\subsection{Cyclic embeddings and finite detection}

The relevant embedding condition is imposed on each cyclic submodule
separately; it does not assert that the entire module embeds in a finite
free module.

\begin{proposition}\label{prop:cyclic-embedding}
Let $E\ne0$ be an $A$-module and put $R=A/\ann_A(E)$.
Then $E$ is annihilator multiplication if and only if every cyclic
$R$-submodule of $E$ embeds in $R^n$ for some positive integer $n$.
If $R$ is Noetherian, these conditions are also equivalent to
\[
 \ann_R(x)=\ann_R\bigl(\ann_R(\ann_R(x))\bigr)
 \qquad\text{for every }x\in E.
\]
\end{proposition}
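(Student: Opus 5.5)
First reduce to the faithful case. The annihilator of $E$ contains $\ann_A(E)$, so $E$ is naturally an $R$-module with $R=A/\ann_A(E)$. By Lemma~\ref{lem:quotient-change}, $E$ is annihilator multiplication over $A$ exactly when it is so over $R$. Hence we may assume $E$ is a faithful $R$-module, and everything below is over $R$. For a finitely generated ideal $I=(a_1,\dots,a_n)$, identity \eqref{eq:colon-annihilator} with $\ann_R(E)=0$ gives
\[
\ann_R(IE)=(0:I)=\ann_R(a_1,\dots,a_n).
\]
So the defining condition for $x\in E$ reads: there exist $a_1,\dots,a_n\in R$ with $\ann_R(x)=\ann_R(a_1,\dots,a_n)$.

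For the first equivalence, consider the map $R\to R^n$, $r\mapsto (ra_1,\dots,ra_n)$. Its kernel is $\ann_R(a_1,\dots,a_n)$, and its image is the cyclic submodule generated by $(a_1,\dots,a_n)$. If $\ann_R(x)=\ann_R(a_1,\dots,a_n)$, then $Rx\cong R/\ann_R(x)$ is isomorphic to this image and so embeds in $R^n$. Conversely, suppose $\varphi: Rx\hookrightarrow R^n$ is injective and write $\varphi(x)=(a_1,\dots,a_n)$. Then $rx=0$ if and only if $ra_i=0$ for all $i$, so $\ann_R(x)=\ann_R(a_1,\dots,a_n)$, and we take $I=(a_1,\dots,a_n)$. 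The hypothesis $E\ne0$ is only needed to keep $n\ge1$ meaningful: if $x=0$ we can embed $0$ into $R^1$, or use $I=0$.

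For the Noetherian clause, write $J=\ann_R(x)$. The inclusion $J\subseteq \ann_R(\ann_R(J))$ always holds. If $J=\ann_R(a_1,\dots,a_n)=\ann_R(S)$ with $S=\{a_1,\dots,a_n\}$, then $S\subseteq \ann_R(J)$. Therefore $\ann_R(\ann_R(J))\subseteq \ann_R(S)=J$, which gives equality; this direction does not need the Noetherian hypothesis. For the converse, suppose $J=\ann_R(\ann_R(J))$. Since $R$ is Noetherian, $\ann_R(J)$ is generated by finitely many elements $a_1,\dots,a_n$. Then $J=\ann_R(a_1,\dots,a_n)=\ann_R(IE)$ with $I=(a_1,\dots,a_n)$, using faithfulness.

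There is no serious obstacle here. The step requiring the most care is the bookkeeping in the reduction: we must check that annihilators over $A$ and over $R$ correspond and that ideal multiples $IE$ are unchanged, and Lemma~\ref{lem:quotient-change} handles exactly this. We also note that the displayed condition is an equation of ideals of $R$ rather than of $A$, which is consistent because the whole statement is phrased over $R$.
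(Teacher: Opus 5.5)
Your proposal is correct and follows essentially the same route as the paper: you reduce to the faithful quotient via Lemma~\ref{lem:quotient-change}, rewrite the condition as $\ann_R(x)=\ann_R(a_1,\ldots,a_n)$, and identify it with injectivity of $r\mapsto(ra_1,\ldots,ra_n)$. In the Noetherian clause, your direct inclusion $\ann_R(\ann_R J)\subseteq\ann_R(S)=J$ is just the triple-annihilator identity that the paper cites, and finite generation of $\ann_R J$ gives the converse exactly as in the paper.
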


\begin{proof}
By Lemma~\ref{lem:quotient-change}, we may work over $R$, where $E$ is
faithful. For $J=\ann_R(x)$, the defining condition is
$J=\ann_R(a_1,\ldots,a_n)$ for finitely many $a_i\in R$.
This equality holds precisely when
\[
 R/J\longrightarrow R^n,\qquad r+J\longmapsto(ra_1,\ldots,ra_n),
\]
is an injective homomorphism. Conversely, every homomorphism from $R/J$
to $R^n$ has this form, by taking the image of $1+J$.
The case $x=0$ causes no exception.

For every ideal $K$, one has $K\subseteq\ann_R(\ann_R K)$ and
$\ann_R K=\ann_R(\ann_R(\ann_R K))$.
Consequently every annihilator ideal is fixed by double annihilation.
If $R$ is Noetherian and $J=\ann_R(\ann_R J)$, the ideal $\ann_R J$
is finitely generated and supplies the required representation of $J$.
\end{proof}

The finite-generation issue in the annihilator semilattice is controlled
by an order condition on the faithful quotient. An \emph{annihilator
ideal} of a ring $R$ means an ideal of the form $\ann_R(I)$, where $I$
is an ideal of $R$. The ascending chain condition on annihilator ideals
is weaker than Noetherianity.

\begin{theorem}\label{thm:finite-detection-acc}
Let $E\ne0$ be an annihilator multiplication $A$-module. If
$R=A/\ann_A(E)$ satisfies the ascending chain condition on annihilator
ideals, then there is a finitely generated submodule $F\subseteq E$
such that $\ann_A(F)=\ann_A(E)$. In particular, this conclusion holds
when the faithful quotient $R$ is Noetherian.
\end{theorem}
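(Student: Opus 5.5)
We first pass to the faithful quotient: by Lemma~\ref{lem:quotient-change} we may regard $E$ as an annihilator multiplication $R$-module, and since $\ann_R(X)=\ann_A(X)/\ann_A(E)$ for every subset $X$ of $E$, it suffices to find a finitely generated $F\subseteq E$ with $\ann_R(F)=0$. So from now on $E$ is faithful over $R$, and $R$ satisfies ACC on annihilator ideals.

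The key point is that in this setting the annihilators of finitely generated submodules are annihilator ideals of $R$. By Lemma~\ref{lem:fg-submodule}, for each finitely generated $L\subseteq E$ there is a finitely generated ideal $I$ with $\ann_R(L)=\ann_R(IE)$. Since $E$ is faithful, \eqref{eq:colon-annihilator} gives $\ann_R(IE)=(0:I)=\ann_R(I)$. Hence $\ann_R(L)$ is an annihilator ideal.

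However, the family $\{\ann_R(L)\}$ \emph{decreases} as $L$ grows, while the hypothesis is an ascending chain condition. To convert it, use the order-reversing correspondence $K\mapsto\ann_R(K)$ on annihilator ideals. This correspondence is an involutive bijection, since $\ann\ann\ann=\ann$, so ACC on annihilator ideals is equivalent to DCC on annihilator ideals.

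We therefore consider the nonempty family $\{\ann_R(L): L\subseteq E \text{ finitely generated}\}$ of annihilator ideals. By DCC, equivalently the minimal condition, it has a minimal member $\ann_R(F)$. For any $x\in E$ we have $\ann_R(F+Rx)\subseteq\ann_R(F)$, so minimality forces equality. Thus $\ann_R(F)\subseteq\ann_R(x)$ for all $x$, giving $\ann_R(F)\subseteq\ann_R(E)=0$. The Noetherian case follows immediately, since a Noetherian ring satisfies ACC on all ideals.

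The main subtle step is the ACC/DCC conversion. One must check two things: that $\ann_R(\ann_R(J))=J$ for every annihilator ideal $J$, and that the passage from chains to minimal elements is legitimate. The second uses the standard fact that DCC on a poset is equivalent to every nonempty subfamily having a minimal element, which relies on dependent choice. I expect no further obstacle.
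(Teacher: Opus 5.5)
Your proposal is correct and follows essentially the same route as the paper. Both pass to the faithful quotient, use Lemma~\ref{lem:fg-submodule} with faithfulness to see that annihilators of finitely generated submodules are annihilator ideals, convert ACC to DCC via the order-reversing annihilator involution, and then show that a minimal member $\ann_R(F)$ is contained in every $\ann_R(x)$, hence is zero.
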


\begin{proof}
Work over $R$. Annihilation is an order-reversing involution on the
set of annihilator ideals. Thus the ascending chain condition on that
set also gives the descending chain condition. By
Lemma~\ref{lem:fg-submodule} and faithfulness, each member of
\[
 \{\ann_R(F):F\subseteq E\text{ is finitely generated}\}
\]
is an annihilator ideal. Choose a minimal member $\ann_R(F)$.
For every $x\in E$, the ideal
$\ann_R(F+Rx)=\ann_R(F)\cap\ann_R(x)$ lies in the same collection
and is contained in $\ann_R(F)$. Minimality gives
$\ann_R(F)\subseteq\ann_R(x)$ for all $x$. Hence
$\ann_R(F)=\ann_R(E)=0$. Taking inverse images in $A$ proves the claim.
\end{proof}

The next lemma isolates consequences of finite annihilator detection
that do not themselves require annihilator multiplication.

\begin{lemma}\label{lem:finite-detection-calculus}
Suppose that an $A$-module $E$ contains a finitely generated submodule
$F$ with $\ann_A(F)=\ann_A(E)=B$. For every finitely generated ideal
$I$ and every multiplicatively closed subset $S$ of $A$, one has
\[
 \begin{aligned}
 \ann_A(IF)&=\ann_A(IE)=(B:I),\\
 \Supp_A(IE)&=\Supp_A(IF)=V((B:I)),\\
 \ann_{S^{-1}A}(S^{-1}E)&=S^{-1}B.
 \end{aligned}
\]
\end{lemma}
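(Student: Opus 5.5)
The plan is to prove the three displayed identities in order, each from the single hypothesis $\ann_A(F)=\ann_A(E)=B$ together with the colon identity \eqref{eq:colon-annihilator}. The first line is immediate: \eqref{eq:colon-annihilator} applied to $M=F$ and to $M=E$ gives $\ann_A(IF)=(\ann_A(F):I)=(B:I)$ and $\ann_A(IE)=(\ann_A(E):I)=(B:I)$. No finite generation is needed here.

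For the support line, I will use that $F$ is finitely generated and $I$ is finitely generated. Hence $IF$ is a finitely generated module, generated by the products $a_jf_k$ of generators. For a finitely generated module $N$ one has $\Supp_A(N)=V(\ann_A(N))$, so $\Supp_A(IF)=V((B:I))$.

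The containment $\Supp_A(IF)\subseteq\Supp_A(IE)$ holds because $IF\subseteq IE$ and localization is exact. The reverse containment is the only real point, since $IE$ need not be finitely generated. Take $\mathfrak p\in\Supp_A(IE)$. Then some element $y\in IE$ is nonzero in $(IE)_{\mathfrak p}$, so $\ann_A(y)\subseteq\mathfrak p$. Since $\ann_A(IE)\subseteq\ann_A(y)$, we get $(B:I)\subseteq\mathfrak p$, that is, $\mathfrak p\in V((B:I))=\Supp_A(IF)$. This gives all three sets equal. In fact, for any module $\Supp(N)\subseteq V(\ann N)$, and equality comes from the finitely generated piece $IF$.

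For the localization line, I will take $s^{-1}a\in\ann_{S^{-1}A}(S^{-1}E)$. Applying it to the images of the finitely many generators $f_1,\dots,f_r$ of $F$ gives $t_kaf_k=0$ for some $t_k\in S$. With $t=\prod t_k$ we get $ta\in\ann_A(F)=B$, so $s^{-1}a=(ts)^{-1}(ta)\in S^{-1}B$. Conversely, $S^{-1}B$ clearly kills $S^{-1}E$, since $B$ kills $E$.

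I expect the only step needing care to be the reverse support inclusion for the possibly non-finitely-generated module $IE$, and the argument above settles it.
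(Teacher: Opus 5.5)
Your proposal is correct and follows the paper's proof essentially step by step. You use the colon identity for the first line, $\Supp_A(IF)=V((B:I))$ for the finitely generated module $IF$, and the general inclusion $\Supp_A(IE)\subseteq V(\ann_A(IE))$ for the reverse support containment. For the last line you use a common-denominator argument on the generators of $F$, combined with $BE=0$.
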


\begin{proof}
The first equality follows from \eqref{eq:colon-annihilator}.
Since $IF$ is finitely generated, its support is $V((B:I))$.
The inclusion $IF\subseteq IE$ gives one inclusion of supports, while
$\Supp_A(IE)\subseteq V(\ann_A(IE))=V((B:I))$ gives the other.
Localization of an annihilator commutes with localization for a finitely
generated module: a common denominator can be chosen for its finitely
many generators. Thus
$\ann_{S^{-1}A}(S^{-1}F)=S^{-1}B$.
The inclusions $F\subseteq E$ and $BE=0$ now give both inclusions
in the last equality.
\end{proof}

\begin{theorem}\label{thm:localization-detection}
Let $E$ be an annihilator multiplication $A$-module containing a finitely
generated submodule $F$ with $\ann_A(F)=\ann_A(E)$.
Then $S^{-1}E$ is an annihilator multiplication $S^{-1}A$-module
for every multiplicatively closed subset $S$ of $A$.
In particular, this holds without finite generation of $E$ whenever
$A/\ann_A(E)$ satisfies the ascending chain condition on annihilator ideals.
\end{theorem}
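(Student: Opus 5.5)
We need: for $x/s\in S^{-1}E$, find a finitely generated ideal $J$ of $S^{-1}A$ with $\ann(x/s)=\ann(J\,S^{-1}E)$. Since $s$ is a unit of $S^{-1}A$, we have $\ann_{S^{-1}A}(x/s)=\ann_{S^{-1}A}(x/1)$, so it suffices to treat elements $x/1$. The plan is to take $J=S^{-1}I$, where $I$ is a finitely generated ideal with $\ann_A(x)=\ann_A(IE)$.

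First I compute the left side. The module $Ax$ is cyclic, hence finitely generated, so annihilators localize: $\ann_{S^{-1}A}(S^{-1}(Ax))=S^{-1}\ann_A(x)$. This is the same common-denominator argument as in Lemma~\ref{lem:finite-detection-calculus}. Since $S^{-1}(Ax)=S^{-1}A\cdot(x/1)$, we get
\[
\ann_{S^{-1}A}(x/1)=S^{-1}\ann_A(x)=S^{-1}\ann_A(IE)=S^{-1}(B:I),
\]
with $B=\ann_A(E)$, using \eqref{eq:colon-annihilator}.

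Next I compute the right side. We have $(S^{-1}I)(S^{-1}E)=S^{-1}(IE)$. Applying \eqref{eq:colon-annihilator} over $S^{-1}A$, and then the last line of Lemma~\ref{lem:finite-detection-calculus} (this is where the hypothesis on $F$ is used), gives
\[
\ann_{S^{-1}A}\bigl(S^{-1}I\cdot S^{-1}E\bigr)=\bigl(\ann_{S^{-1}A}(S^{-1}E):S^{-1}I\bigr)=(S^{-1}B:S^{-1}I).
\]
It remains to check that colon ideals localize, $(S^{-1}B:S^{-1}I)=S^{-1}(B:I)$ for finitely generated $I=(a_1,\dots,a_n)$. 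The inclusion $\supseteq$ is clear. For $\subseteq$, suppose $(r/t)a_i\in S^{-1}B$ for each $i$. Then there are $u_i\in S$ with $u_i r a_i\in B$. Putting $u=\prod u_i$ gives $ur\in(B:I)$, so $r/t=ur/(ut)\in S^{-1}(B:I)$. Alternatively, this step can be obtained directly from the first line of Lemma~\ref{lem:finite-detection-calculus}: $(B:I)=\ann_A(IF)$, and $IF$ is finitely generated, so its annihilator localizes.

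The main obstacle is the identification $\ann_{S^{-1}A}(S^{-1}E)=S^{-1}B$. Without finite generation this fails in general, and that is exactly why the hypothesis on $F$ is needed; Lemma~\ref{lem:finite-detection-calculus} supplies it. The final sentence of the statement then follows by Theorem~\ref{thm:finite-detection-acc}, which produces such an $F$ when $E\neq0$; when $E=0$ the conclusion is trivial.
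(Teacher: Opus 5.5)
Your proposal is correct and follows essentially the same route as the paper. You reduce to $x/1$, localize $\ann_A(x)=(B:I)$, commute the colon by the finitely generated ideal $I$ with localization, and use Lemma~\ref{lem:finite-detection-calculus} to get $\ann_{S^{-1}A}(S^{-1}E)=S^{-1}B$, with the final assertion coming from Theorem~\ref{thm:finite-detection-acc}; you also spell out the colon-localization step and the $E=0$ case, which the paper leaves implicit.
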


\begin{proof}
Put $B=\ann_A(E)$. For $x\in E$, choose a finitely generated ideal
$I$ with $\ann_A(x)=(B:I)$.
Annihilators of cyclic modules, and colons by finitely generated ideals,
commute with localization. Together with
Lemma~\ref{lem:finite-detection-calculus}, this gives
\[
 \ann_{S^{-1}A}(x/1)=S^{-1}(B:I)
 =(S^{-1}B:S^{-1}I)
 =\ann_{S^{-1}A}\bigl((S^{-1}I)(S^{-1}E)\bigr).
\]
Multiplication by the unit $1/s$ does not change the annihilator of $x/1$.
The last assertion follows from Theorem~\ref{thm:finite-detection-acc}.
\end{proof}

The preceding theorem extends the finitely generated localization result
of \cite[Proposition~3]{KocAMM}. Over a Noetherian faithful quotient,
finite detection is also exactly the additional requirement in a
local-to-global statement.

\begin{theorem}\label{thm:noetherian-local-global}
Let $E\ne0$ be an $A$-module and suppose that
$R=A/\ann_A(E)$ is Noetherian. Then $E$ is annihilator multiplication
if and only if both of the following conditions hold:
\begin{enumerate}[label=\textup{(\roman*)}]
\item some finitely generated $F\subseteq E$ satisfies
$\ann_A(F)=\ann_A(E)$;
\item $E_{\mathfrak m}$ is annihilator multiplication over
$R_{\mathfrak m}$ for every $\mathfrak m\in\Max(R)$.
\end{enumerate}
In particular, for finitely generated $E$, the property is local at the
maximal ideals of its faithful quotient.
\end{theorem}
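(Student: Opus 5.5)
First reduce to the faithful case. By Lemma~\ref{lem:quotient-change}, $E$ is annihilator multiplication over $A$ if and only if it is so over $R=A/\ann_A(E)$. Condition (i) is also unchanged, since $\ann_R(F)$ is the image of $\ann_A(F)$. So we may assume $A=R$ is Noetherian and $E$ is faithful.

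\emph{Forward direction.} Condition (i) is Theorem~\ref{thm:finite-detection-acc}, because a Noetherian ring satisfies the ascending chain condition on annihilator ideals. Condition (ii) then follows from Theorem~\ref{thm:localization-detection} with $S=R\setminus\mathfrak m$.

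\emph{Converse.} Assume (i) and (ii), and fix $x\in E$. Since $R$ is Noetherian, Proposition~\ref{prop:cyclic-embedding} says it suffices to show $J=\ann_R(\ann_R(J))$, where $J=\ann_R(x)$. The inclusion $J\subseteq\ann_R(\ann_R J)$ always holds, so we need equality. Equality of ideals is local, so it is enough to check it after localizing at each $\mathfrak m\in\Max(R)$.

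Over a Noetherian ring, annihilators of ideals commute with localization, because the ideals involved are finitely generated. This gives
\[
\ann_R(\ann_R J)_{\mathfrak m}=\ann_{R_{\mathfrak m}}\bigl(\ann_{R_{\mathfrak m}}(J_{\mathfrak m})\bigr),
\qquad J_{\mathfrak m}=\ann_{R_{\mathfrak m}}(x/1).
\]
Next, (i) and Lemma~\ref{lem:finite-detection-calculus} show that $E_{\mathfrak m}$ is faithful over $R_{\mathfrak m}$; this is exactly where (i) is used. Hence $R_{\mathfrak m}$ is the faithful quotient of $E_{\mathfrak m}$, and it is Noetherian. Proposition~\ref{prop:cyclic-embedding}, applied to $E_{\mathfrak m}$ and using (ii), gives $J_{\mathfrak m}=\ann(\ann(J_{\mathfrak m}))$ whenever $E_{\mathfrak m}\ne0$. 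If $E_{\mathfrak m}=0$, faithfulness forces $R_{\mathfrak m}=0$, and the equality holds trivially. Therefore the two ideals agree at every maximal ideal, so they are equal.

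\emph{Finitely generated case.} If $E$ is finitely generated, (i) holds with $F=E$, so the property is local at the maximal ideals of $R$.

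\emph{Main obstacle.} The delicate step is confirming that the localized module has faithful quotient exactly $R_{\mathfrak m}$, so that the double-annihilator criterion can be applied locally. Without (i), $\ann(E_{\mathfrak m})$ could be larger than $0$, and the local condition would only control double annihilators over a proper quotient of $R_{\mathfrak m}$.
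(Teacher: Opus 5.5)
Your proposal is correct and follows the paper's proof essentially step for step: necessity comes from Theorems~\ref{thm:finite-detection-acc} and \ref{thm:localization-detection}, and sufficiency uses (i) with Lemma~\ref{lem:finite-detection-calculus} to make each $E_{\mathfrak m}$ faithful, then verifies $J=\ann_R(\ann_R J)$ at every maximal ideal via Proposition~\ref{prop:cyclic-embedding} and the fact that annihilators of finitely generated ideals commute with localization. Your separate case $E_{\mathfrak m}=0$ is harmless but never occurs, since faithfulness of $E_{\mathfrak m}$ over the nonzero local ring $R_{\mathfrak m}$ already forces $E_{\mathfrak m}\ne0$.
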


\begin{proof}
Necessity follows from Theorems~\ref{thm:finite-detection-acc} and
\ref{thm:localization-detection}.
For sufficiency, work over $R$ and let $J=\ann_R(x)$ for $x\in E$.
Condition (i) and Lemma~\ref{lem:finite-detection-calculus} show that
$E_{\mathfrak m}$ is faithful over $R_{\mathfrak m}$.
All ideals of $R$, including $J$ and $\ann_R J$, are finitely generated.
Applying Proposition~\ref{prop:cyclic-embedding} locally and commuting
the two annihilators with localization gives
\[
 J_{\mathfrak m}
 =\ann_{R_{\mathfrak m}}\bigl(\ann_{R_{\mathfrak m}}
         (J_{\mathfrak m})\bigr)
 =\bigl(\ann_R(\ann_R J)\bigr)_{\mathfrak m}.
\]
Equality at every maximal ideal yields $J=\ann_R(\ann_R J)$.
Proposition~\ref{prop:cyclic-embedding} finishes the proof.
\end{proof}

\begin{example}\label{ex:local-global-obstruction}
Condition (i) cannot be omitted, even for $A=\mathbb Z$.
The faithful module $E=\bigoplus_p\mathbb Z/p\mathbb Z$ has
$E_{(q)}\cong\mathbb Z/q\mathbb Z$ for every prime number $q$,
so every maximal localization is cyclic and annihilator multiplication.
Globally, however, $\ann_{\mathbb Z}(IE)$ is either $0$ or $\mathbb Z$
for every ideal $I$, whereas $p\mathbb Z$ occurs as an element
annihilator. Thus $E$ is not annihilator multiplication.
Each finitely generated submodule has finite prime support and a nonzero
annihilator, so no submodule as in (i) exists.
\end{example}

\subsection{Faithful modules and universal module conditions}

Recall that a commutative ring is \emph{quasi-Frobenius} if it is
Artinian and self-injective. Equivalently, it is Artinian and every
ideal $J$ satisfies $J=\ann(\ann J)$; this is a classical
characterization, not a new assertion here; see \cite{Storrer69,Lam99}.
The annihilator multiplication formulation below does not require the
ring to be reduced.

\begin{theorem}\label{thm:faithful-qf}
For a nonzero commutative ring $R$, the following conditions are equivalent:
\begin{enumerate}[label=\textup{(\roman*)}]
\item every faithful $R$-module is annihilator multiplication;
\item every faithful $R$-module generated by at most two elements is
annihilator multiplication;
\item $R\oplus R/J$ is annihilator multiplication for every ideal $J$;
\item every ideal of $R$ is the annihilator of a finitely generated ideal;
\item $R$ is quasi-Frobenius.
\end{enumerate}
\end{theorem}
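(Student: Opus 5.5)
The plan is to run the cycle (i)$\Rightarrow$(ii)$\Rightarrow$(iii)$\Rightarrow$(iv)$\Rightarrow$(v)$\Rightarrow$(i). The implication (i)$\Rightarrow$(ii) is trivial. For (ii)$\Rightarrow$(iii), note that $R\oplus R/J$ is faithful, because it contains $R$, and it is generated by $(1,0)$ and $(0,1+J)$.

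For (iii)$\Rightarrow$(iv), fix an ideal $J$ and apply the defining condition to the element $x=(0,1+J)$, whose annihilator is $J$. This gives a finitely generated ideal $I$ with
\[
J=\ann_R\bigl(I(R\oplus R/J)\bigr)=(0:I)=\ann_R(I),
\]
using \eqref{eq:colon-annihilator} and faithfulness. So $J$ is the annihilator of a finitely generated ideal.

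The step (iv)$\Rightarrow$(v) is the main obstacle: I must extract the Artinian property from (iv) alone.
\begin{enumerate}[label=\textup{(\alph*)}]
\item By (iv), every ideal is an annihilator ideal. Hence $J=\ann(\ann J)$ for all $J$, since annihilator ideals are fixed by double annihilation, as in the proof of Proposition~\ref{prop:cyclic-embedding}. Consequently $J\mapsto\ann J$ is an inclusion-reversing involution on the whole ideal lattice.
\item The ring is Noetherian. Take an ascending chain $J_1\subseteq J_2\subseteq\cdots$ and let $J=\bigcup J_k$. By (iv), $J=\ann(I)$ with $I=(a_1,\dots,a_n)$ finitely generated. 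Then $\ann J=\ann\ann I\supseteq I$, while $\ann J=\bigcap_k\ann J_k$. Each $a_i$ lies in $\ann J$, but I need all $a_i$ inside a single $\ann J_k$, and that requires care.
\item A cleaner route to Noetherianity is to show that every ideal is finitely generated. Apply (iv) to $\ann J$: it equals $\ann(I')$ with $I'$ finitely generated. Then $J=\ann\ann J=\ann\ann I'$. Since $I'$ is itself an ideal, the involution gives $\ann\ann I'=I'$, so $J=I'$ is finitely generated. Hence $R$ is Noetherian.
\item The ring is Artinian. Given a descending chain $J_k$, the ideals $\ann J_k$ form an ascending chain, which stabilizes. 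Applying $\ann$ again, the original chain $J_k=\ann\ann J_k$ stabilizes too.
\end{enumerate}
Once $R$ is Artinian and $J=\ann\ann J$ holds for all ideals, the quoted classical characterization gives that $R$ is quasi-Frobenius.

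For (v)$\Rightarrow$(i), let $E$ be a faithful module over the QF ring $R$, and let $x\in E$. Since $R$ is Noetherian, $\ann_R(\ann_R x)$ is finitely generated, and the double-annihilator property gives
\[
\ann_R(x)=\ann_R\bigl(\ann_R(\ann_R x)\bigr).
\]
The criterion in Proposition~\ref{prop:cyclic-embedding} then applies, since $R=A/\ann(E)$ with $A=R$. Explicitly, take $I=\ann_R(\ann_R x)$; faithfulness gives $\ann(IE)=(0:I)=\ann_R(x)$. The case $E=0$ does not arise, because a faithful module over a nonzero ring is nonzero.
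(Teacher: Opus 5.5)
Your proposal is correct and follows the paper's proof essentially step for step. It uses the same test element $(0,1+J)$, the same trick of applying (iv) to $\ann J$ and double-annihilating to get Noetherianity, the involution to convert ACC into DCC, and the classical double-annihilator characterization; the only difference is that you close the cycle with a direct (v)$\Rightarrow$(i) instead of going through (v)$\Rightarrow$(iv)$\Rightarrow$(i), and your unfinished step (b) can simply be deleted since (c) already does the job.
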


\begin{proof}
The first condition implies the second, which implies the third.
The module in (iii) is faithful. Applying its defining condition to
$(0,1+J)$ proves (iv).

Suppose (iv) holds. Every ideal is an annihilator ideal and therefore
is fixed by double annihilation. Given any ideal $J$, apply (iv) to
$\ann_R J$ to obtain a finitely generated ideal $K$ such that
$\ann_R J=\ann_R K$. Double annihilation gives $J=K$.
Hence $R$ is Noetherian. Since annihilation is now an order-reversing
involution on the entire ideal lattice, the ascending chain condition
also gives the descending chain condition. Thus $R$ is Artinian,
and the classical double-annihilator characterization gives (v).

If (v) holds, then $\ann_R J$ is finitely generated and
$J=\ann_R(\ann_R J)$ for every ideal $J$, proving (iv).
Finally, assume (iv), let $E$ be faithful, and write
$\ann_R(x)=\ann_R I$ with $I$ finitely generated.
Faithfulness yields $\ann_R I=\ann_R(IE)$, proving (i).
\end{proof}

\begin{remark}
For reduced rings, Theorem~\ref{thm:faithful-qf} recovers the finite
product of fields characterization in \cite[Theorem~5]{KocAMM}, because
reduced commutative Artinian rings are finite products of fields.
The additional content is the removal of reducedness and the
identification of the resulting class as the quasi-Frobenius rings.
\end{remark}

An \emph{Artinian principal ideal ring} is a commutative Artinian ring
in which every ideal is principal. Equivalently, it is a finite product
of Artinian local rings whose ideals are the successive powers of a
principal maximal ideal. Requiring the property for all modules,
rather than only the faithful ones, gives this smaller class.

\begin{theorem}\label{thm:all-modules-pir}
For a nonzero commutative ring $R$, the following conditions are equivalent:
\begin{enumerate}[label=\textup{(\roman*)}]
\item every $R$-module is annihilator multiplication;
\item every $R$-module generated by at most two elements is annihilator
multiplication;
\item $R/B\oplus R/J$ is annihilator multiplication whenever $B\subseteq J$
are ideals of $R$;
\item $R/B$ is quasi-Frobenius for every proper ideal $B$ of $R$;
\item $R$ is an Artinian principal ideal ring.
\end{enumerate}
\end{theorem}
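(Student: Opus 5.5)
We prove the cycle (i)$\Rightarrow$(ii)$\Rightarrow$(iii)$\Rightarrow$(iv)$\Rightarrow$(v)$\Rightarrow$(i).

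The implications (i)$\Rightarrow$(ii)$\Rightarrow$(iii) are immediate, since $R/B\oplus R/J$ is generated by two elements.

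For (iii)$\Rightarrow$(iv), fix a proper ideal $B$ and set $\bar R=R/B$. The module $R/B\oplus R/J$ has annihilator $B$ because $B\subseteq J$. Hence it is a faithful $\bar R$-module of the form $\bar R\oplus\bar R/\bar J$. By Lemma~\ref{lem:quotient-change}, the annihilator multiplication property passes from $R$ to $\bar R$. Every ideal of $\bar R$ has the form $\bar J$ for some ideal $J\supseteq B$, so condition (iii) of Theorem~\ref{thm:faithful-qf} holds for $\bar R$. That theorem then gives that $\bar R$ is quasi-Frobenius.

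For (v)$\Rightarrow$(i), let $E$ be any module and put $B=\ann_R(E)$; we may assume $E\neq 0$, so $B$ is proper. The ring $R/B$ is a quotient of an Artinian principal ideal ring, hence again an Artinian principal ideal ring. Once we know $R/B$ is quasi-Frobenius, $E$ is a faithful module over it, so Theorem~\ref{thm:faithful-qf} makes $E$ annihilator multiplication over $R/B$. Lemma~\ref{lem:quotient-change} then lifts this to $R$.

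It therefore suffices to show that every Artinian principal ideal ring is quasi-Frobenius. Write such a ring as a finite product of local rings $S$ with maximal ideal $\mathfrak m=(t)$ and $t^n=0$, $t^{n-1}\neq0$. The ideals of $S$ are the chain $(t^k)$, and one checks $\ann(t^k)=(t^{n-k})$. Hence every ideal is fixed by double annihilation, and the double-annihilator characterization shows $S$ is quasi-Frobenius. A finite product of quasi-Frobenius rings is quasi-Frobenius, since annihilators are computed componentwise.

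For (iv)$\Rightarrow$(v), the main obstacle, we first take $B=0$: this shows $R$ itself is quasi-Frobenius, hence Artinian and a finite product of local Artinian rings. Every quotient of a factor is a quotient of $R$ by a proper ideal and so is quasi-Frobenius; we may therefore assume $(R,\mathfrak m)$ is local. It remains to show that $\mathfrak m$ is principal, after which Nakayama gives that all ideals are powers of $\mathfrak m$.

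Suppose $\dim_k\mathfrak m/\mathfrak m^2\ge2$, where $k=R/\mathfrak m$, and consider $\bar R=R/\mathfrak m^2$. This ring is local with square-zero maximal ideal $\bar{\mathfrak m}$ of dimension at least $2$. Since $\bar{\mathfrak m}^2=0$, we have $\ann(\bar{\mathfrak m})=\bar{\mathfrak m}$, so the socle of $\bar R$ is $\bar{\mathfrak m}$. This contradicts the fact that a local quasi-Frobenius ring has a simple socle; we can see that directly, because $\ann$ reverses the lattice, so the unique minimal ideal must correspond to $\ann(\mathfrak m)$. More concretely, a one-dimensional subspace $L\subsetneq\bar{\mathfrak m}$ gives $\ann(L)=\bar{\mathfrak m}$ and then $\ann(\ann L)=\bar{\mathfrak m}\neq L$. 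This contradicts double annihilation, so $\mathfrak m$ is principal.

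Finally, the global case follows because a finite product of local principal ideal rings is a principal ideal ring.
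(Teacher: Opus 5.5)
Your proof is correct. For (i)$\Rightarrow$(ii)$\Rightarrow$(iii)$\Rightarrow$(iv)$\Rightarrow$(v) it is essentially the paper's argument: you pass to $R/B$ via Lemma~\ref{lem:quotient-change} to reach Theorem~\ref{thm:faithful-qf}(iii), and then use the one-dimensional subspace $L\subsetneq\bar{\mathfrak m}$ in $R/\mathfrak m^2$ together with Nakayama.

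The real difference is in (v)$\Rightarrow$(i). The paper splits modules along the coordinate idempotents and works over an Artinian local principal ideal ring with $\mathfrak m=(\pi)$. There it computes directly: if $\ann_S(E)=(\pi^b)$ and $\ann_S(x)=(\pi^a)$, then $I=(\pi^{b-a})$ gives $\ann_S(IE)=((\pi^b):(\pi^{b-a}))=(\pi^a)$. You instead go through (iv). Quotients of Artinian principal ideal rings are again such rings and are quasi-Frobenius. Then (iv)$\Rightarrow$(i) is immediate, because every nonzero module is faithful over $R/\ann_R(E)$.

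Your route makes transparent that the universal condition is just the faithful condition of Theorem~\ref{thm:faithful-qf} imposed on every proper quotient, and it needs no decomposition of modules. The paper's route is self-contained, exhibits an explicit principal representing ideal, and avoids the classical double-annihilator characterization of quasi-Frobenius rings in this direction.

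You could drop that dependence too. The identity $\ann(t^k)=(t^{n-k})$, taken componentwise, shows directly that every ideal of $R/B$ is the annihilator of a principal ideal. That is condition (iv) of Theorem~\ref{thm:faithful-qf}, and the last step of that theorem's proof then applies without ever naming quasi-Frobenius rings.
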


\begin{proof}
The first two implications are immediate. For fixed $B$, the modules
in (iii) are precisely the tests in Theorem~\ref{thm:faithful-qf}(iii)
over $R/B$. Lemma~\ref{lem:quotient-change} therefore proves (iv).

Assume (iv). Taking $B=0$ shows that $R$ is Artinian.
Decompose it into its finitely many Artinian local factors, and let
$(S,\mathfrak m)$ be one such factor. The ring $S/\mathfrak m^2$ is
a quotient of $R$, hence is quasi-Frobenius.
Put $\mathfrak n=\mathfrak m/\mathfrak m^2$.
In a local ring with square-zero maximal ideal $\mathfrak n$, the
annihilator of every nonzero proper ideal is $\mathfrak n$.
If $\dim_{S/\mathfrak m}\mathfrak n\ge2$, a one-dimensional subspace
$L\subsetneq\mathfrak n$ is an ideal with
$\ann(\ann L)=\ann\mathfrak n=\mathfrak n\ne L$, a contradiction.
Thus $\mathfrak m/\mathfrak m^2$ has dimension at most one.
Nakayama's lemma shows that $\mathfrak m$ is principal (or zero).
If $\mathfrak m=(\pi)$ and $\pi^n=0$, every nonzero element is
$u\pi^j$ for a unit $u$ and some $0\le j<n$: choose its largest
power of $\mathfrak m$ containing it. An ideal is consequently generated
by an element with the least such exponent among its nonzero elements.
Every ideal of $S$ is a power of $(\pi)$, proving (v).

Conversely, it suffices to prove (i) over an Artinian local principal
ideal ring $S$, since modules over a finite product decompose by the
coordinate idempotents, and the defining annihilator equalities hold
coordinatewise. The field case is immediate. Otherwise write
$\mathfrak m=(\pi)$, with $\pi^n=0$ and $\pi^{n-1}\ne0$.
For a nonzero module $E$, put $\ann_S(E)=(\pi^b)$.
For $0\ne x\in E$, write $\ann_S(x)=(\pi^a)$, where
$1\le a\le b\le n$. The ideal $I=(\pi^{b-a})$ satisfies
\[
 \ann_S(IE)=((\pi^b):(\pi^{b-a}))=(\pi^a)=\ann_S(x).
\]
The colon equality follows by comparing the exponents of nonzero
principal ideals. For $x=0$ take $I=0$; the zero module is also allowed.
This proves (i).
\end{proof}

The equivalence between the classical ring conditions in (iv) and (v)
is also consistent with the quotient self-injectivity results discussed
in \cite{Storrer69}. Here quotients include $R$ itself; excluding the
zero ideal would give a different ring-theoretic question.

\begin{corollary}\label{cor:closure-pir}
For a commutative ring $R$, each of the following properties is equivalent
to $R$ being an Artinian principal ideal ring: the class of annihilator
multiplication modules is closed under finite direct sums; it is closed
under factor modules; it is closed under extensions.
\end{corollary}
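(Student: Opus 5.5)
Each of the three closure properties will be compared with condition (i) of Theorem~\ref{thm:all-modules-pir}, using the test modules in its condition (iii). If $R$ is an Artinian principal ideal ring, then every $R$-module is annihilator multiplication by Theorem~\ref{thm:all-modules-pir}, so the class is the whole module category and is trivially closed under direct sums, factor modules and extensions. Everything therefore reduces to the converse direction. The plan is to show that each closure property forces every module of the form $R/B\oplus R/J$ with $B\subseteq J$ to be annihilator multiplication, and then to invoke (iii)$\Rightarrow$(v) of Theorem~\ref{thm:all-modules-pir}.

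The key observation is that every cyclic module $R/B$ is annihilator multiplication. Put $E=R/B$ and $x=r+B$. Then $\ann_R(x)=(B:r)=(B:(r))$, and by \eqref{eq:colon-annihilator} this equals $\ann_R((r)E)$. So the finitely generated ideal $I=(r)$ works.

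From this observation the direct-sum case follows at once: if the class is closed under finite direct sums, then $R/B\oplus R/J$ is annihilator multiplication for all ideals $B\subseteq J$. For extensions, the split exact sequence $0\to R/B\to R/B\oplus R/J\to R/J\to0$ has annihilator multiplication end terms, so closure under extensions yields the same conclusion. For factor modules, the plan is to write $R/B\oplus R/J$ as a quotient of a module already known to be annihilator multiplication. The natural candidate is $R^2$: since $R$ is cyclic, Lemma~\ref{lem:direct-powers} shows that $R^2$ is annihilator multiplication. Moreover $R/B\oplus R/J\cong R^2/(B\oplus J)$, so closure under factor modules gives the required test modules.

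In each case, condition (iii) of Theorem~\ref{thm:all-modules-pir} then holds, and $R$ is an Artinian principal ideal ring. I expect no serious obstacle. The only point needing care is the factor-module case, where the quotient must start from a module already known to lie in the class without assuming any direct-sum closure; using $R^2$ together with Lemma~\ref{lem:direct-powers} handles this. One should also note that the zero ring is excluded or trivial: Theorem~\ref{thm:all-modules-pir} assumes $R$ is nonzero, and the zero ring is degenerate anyway.
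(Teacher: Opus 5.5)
Your proposal is correct and follows essentially the paper's argument. Cyclic modules are annihilator multiplication, closure under direct sums (and under extensions, via split sequences) yields condition (iii) of Theorem~\ref{thm:all-modules-pir}, and the factor-module case goes through quotients of $R^2$. The only cosmetic difference is that you write the test modules directly as $R^2/(B\oplus J)$ to reach condition (iii), whereas the paper notes that every two-generated module is a quotient of $R^2$ and so reaches condition (ii).
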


\begin{proof}
Every cyclic module is annihilator multiplication: in $R/B$, take
$I=(a)$ for the element $a+B$.
Closure under finite direct sums therefore gives
Theorem~\ref{thm:all-modules-pir}(iii).
Closure under factor modules gives its condition (ii), since every
two-generated module is a quotient of the annihilator multiplication
module $R^2$. Closure under extensions implies closure under finite
direct sums by considering split exact sequences.
Conversely, over an Artinian principal ideal ring every module has the
property, so all three closure assertions hold. The zero ring is harmless.
\end{proof}

\begin{example}\label{ex:qf-not-universal}
The ring $R=k[x,y]/(x^2,y^2)$ is quasi-Frobenius, but not every
$R$-module is annihilator multiplication. Indeed, the functional taking
the coefficient of $xy$ gives a nondegenerate multiplication pairing
$R\times R\to k$. For an ideal $I$, its orthogonal complement under
this pairing is $\ann_R I$: if $aI\ne0$, nondegeneracy and the ideal
property supply $b\in R$ and $i\in I$ with a nonzero $xy$ coefficient
in $abi$. Thus double orthogonality proves the double-annihilator
property for all ideals, and $R$ is Artinian.

Consider $E=R/(xy)\oplus R/(x)$, whose annihilator is $(xy)$.
Over its faithful quotient $S=R/(xy)=k[x,y]/(x,y)^2$, the element
$(0,1+(x))$ has annihilator $(x)$.
The annihilators of ideals of $S$ are only $0$, $(x,y)$, and $S$.
Hence $E$ is not annihilator multiplication, even though its two cyclic
summands are. This gives an explicit distinction between
Theorems~\ref{thm:faithful-qf} and \ref{thm:all-modules-pir}.
\end{example}

\begin{corollary}\label{cor:abelian-groups}
An abelian group is an annihilator multiplication $\mathbb Z$-module
if and only if it is torsion-free or has bounded exponent.
\end{corollary}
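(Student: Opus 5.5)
We reduce to the faithful quotient and split according to whether the group $G$ is faithful over $\mathbb Z$. Put $B=\ann_{\mathbb Z}(G)$. If $B=(n)$ with $n\ne0$, then $G$ has bounded exponent. The faithful quotient is then $\mathbb Z/n\mathbb Z$, which is an Artinian principal ideal ring. By Theorem~\ref{thm:all-modules-pir}, every $\mathbb Z/n\mathbb Z$-module is annihilator multiplication, so Lemma~\ref{lem:quotient-change} shows that $G$ is annihilator multiplication over $\mathbb Z$. The zero group fits here too, with $n=1$. This settles one direction for groups of bounded exponent.

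If $G$ is torsion-free and nonzero, every nonzero element has annihilator $0$. Since $\mathbb Z$ is a domain, a torsion-free nonzero group is faithful, so taking $I=\mathbb Z$ gives $\ann(IG)=0$. For $x=0$ we take $I=0$. Hence torsion-free groups are annihilator multiplication.

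For the converse, assume $G$ is annihilator multiplication and that $G$ does not have bounded exponent, so $B=0$ and $G$ is faithful. For each $x\in G$ we have $\ann(x)=\ann(IG)=(0:I)$ by \eqref{eq:colon-annihilator}. In the domain $\mathbb Z$, the colon $(0:I)$ is $\mathbb Z$ when $I=0$ and $0$ otherwise. Thus every element annihilator is $0$ or $\mathbb Z$, so every nonzero element has zero annihilator, and $G$ is torsion-free. This is exactly the mechanism of Example~\ref{ex:local-global-obstruction}. Equivalently, one can invoke Proposition~\ref{prop:cyclic-embedding}: a cyclic subgroup $\mathbb Z/m$ with $m\ne0,1$ cannot embed in $\mathbb Z^n$.

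There is no real obstacle here. The only point needing care is the dichotomy that an abelian group is either of bounded exponent (nonzero annihilator) or faithful (zero annihilator), which holds because every ideal of $\mathbb Z$ is principal. The colon computation in a domain is immediate.
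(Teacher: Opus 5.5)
Your proof is correct and follows essentially the same route as the paper: you split according to whether $\ann_{\mathbb Z}(G)$ is zero, use that a faithful module over the domain $\mathbb Z$ is annihilator multiplication exactly when it is torsion-free (since $\ann_{\mathbb Z}(I)$ is $0$ or $\mathbb Z$), and handle the nonfaithful, bounded-exponent case by passing to the Artinian principal ideal ring $\mathbb Z/n\mathbb Z$ and applying Theorem~\ref{thm:all-modules-pir}. The only nit is that for $n=1$ the quotient is the zero ring, which that theorem excludes, but the zero group is trivially annihilator multiplication, which is how the paper handles it separately.
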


\begin{proof}
The zero group is immediate, so assume $E\ne0$.
A faithful module over a domain is annihilator multiplication exactly
when it is torsion-free: annihilators of nonzero ideals of a domain
are zero, and one takes $I=R$ for a nonzero torsion-free element.
This also recovers \cite[Theorem~4]{KocAMM}.
If $\ann_{\mathbb Z}(E)=n\mathbb Z$ with $n>0$, work over the
Artinian principal ideal ring $\mathbb Z/n\mathbb Z$ and apply
Theorem~\ref{thm:all-modules-pir}.
Conversely, a nonfaithful abelian group has bounded exponent.
\end{proof}

\subsection{Reduced rings with finitely many minimal primes}

For rings with zero divisors, call a module \emph{regular-torsion-free}
if multiplication by every non-zero-divisor of the ring is injective.
This is weaker than requiring every nonzero element of the module to
have zero annihilator. Let $Q(R)$ denote the total quotient ring,
obtained by inverting the non-zero-divisors of $R$.

\begin{theorem}\label{thm:reduced-finite-min}
Let $R$ be a nonzero reduced ring with finitely many minimal primes,
and let $E$ be a faithful $R$-module. Then the following conditions
are equivalent:
\begin{enumerate}[label=\textup{(\roman*)}]
\item $E$ is annihilator multiplication;
\item $E$ is regular-torsion-free;
\item the canonical map $E\to Q(R)\otimes_R E$ is injective.
\end{enumerate}
When these conditions hold, every $x\in E$ has
$\ann_R(x)=\ann_R(aE)$ for a single element $a\in R$.
\end{theorem}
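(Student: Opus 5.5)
Let $\mathfrak p_1,\ldots,\mathfrak p_n$ be the minimal primes of $R$. Since $R$ is reduced with finitely many minimal primes, its zero divisors are exactly $\bigcup_i\mathfrak p_i$. Moreover $Q(R)\cong\prod_i R_{\mathfrak p_i}$ is a finite product of fields, and every annihilator ideal has the form $\ann_R(a)=\bigcap_{i\in T}\mathfrak p_i$ for a set $T$ of indices. I will take this as the background structure.

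The plan is to go around the cycle (ii)$\Leftrightarrow$(iii), (i)$\Rightarrow$(ii), (ii)$\Rightarrow$(i), proving the sharper single-element form in the last step.

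\textbf{(ii)$\Leftrightarrow$(iii).} This is formal. The kernel of $E\to Q(R)\otimes_R E=S^{-1}E$, with $S$ the set of non-zero-divisors, is exactly the set of elements killed by some non-zero-divisor.

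\textbf{(i)$\Rightarrow$(ii).} Let $s$ be a non-zero-divisor and $sx=0$. Write $\ann_R(x)=\ann_R(IE)=\ann_R(I)$, using faithfulness together with \eqref{eq:colon-annihilator}. Then $sI=0$, so $I=0$ because $s$ is regular. Hence $\ann_R(x)=R$ and $x=0$.

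\textbf{(ii)$\Rightarrow$(i), with one element.} Fix $x\ne0$ and put $J=\ann_R(x)$.
\begin{enumerate}[label=\textup{(\alph*)}]
\item $J$ consists of zero divisors; otherwise a regular element kills $x$, contradicting (ii). By prime avoidance, $J\subseteq\mathfrak p_i$ for some $i$.
\item Let $T=\{i:J\subseteq\mathfrak p_i\}$, which is nonempty by (a). The candidate is $a\in\bigcap_{i\notin T}\mathfrak p_i\setminus\bigcup_{i\in T}\mathfrak p_i$. Such an $a$ exists by prime avoidance, because for $j\in T$ the ideal $\bigcap_{i\notin T}\mathfrak p_i$ is not contained in the minimal prime $\mathfrak p_j$.
\item Then $\ann_R(a)=\bigcap_{i\in T}\mathfrak p_i$, and faithfulness gives $\ann_R(aE)=\ann_R(a)$. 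So the target is the equality
\[
\ann_R(x)=\bigcap_{i\in T}\mathfrak p_i .
\]
\end{enumerate}

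\textbf{The inclusion $\bigcap_{i\in T}\mathfrak p_i\subseteq J$.} This is the main obstacle. The idea is to find $b\in J$ with $b\notin\mathfrak p_i$ for every $i\notin T$; such $b$ exists by prime avoidance, since $J\not\subseteq\mathfrak p_i$ for $i\notin T$. Now take $c\in\bigcap_{i\in T}\mathfrak p_i$ and set $s=b+c$. I will try to show $s$ avoids every minimal prime:
\begin{itemize}
\item for $i\notin T$, $b\notin\mathfrak p_i$ and we want $c\in\mathfrak p_i$;
\item for $i\in T$, $c\in\mathfrak p_i$ and we want $b\notin\mathfrak p_i$.
\end{itemize}
Neither requirement holds automatically, so $b$ and $c$ must be adjusted. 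The tool is $bc\in\bigcap_i\mathfrak p_i=0$ when $b,c$ are chosen appropriately; in fact $c\cdot b$ already lies in every $\mathfrak p_i$ for $i\notin T$ provided $c$ is chosen there.

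A cleaner route is to replace $c$ by $cb'$, where $b'\in\bigcap_{i\notin T}\mathfrak p_i\setminus\bigcup_{i\in T}\mathfrak p_i$ (for instance $b'=a$). The key computation is then $cax$:
\begin{enumerate}[label=\textup{(\roman*)}]
\item $ca\in\bigcap_i\mathfrak p_i=0$, since $c$ lies in the primes indexed by $T$ and $a$ in the others. Hence $cax=0$.
\item The element $b+a$ lies in no minimal prime, so it is regular. (For $i\notin T$, $b\notin\mathfrak p_i$ while $a\in\mathfrak p_i$; for $i\in T$, $a\notin\mathfrak p_i$ while $b\in J\subseteq\mathfrak p_i$.)
\item $c(b+a)x=cbx+cax=0$, using $bx=0$.
\item By (ii), regularity of $b+a$ gives $cx=0$, so $c\in J$.
\end{enumerate}

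This yields $J=\bigcap_{i\in T}\mathfrak p_i=\ann_R(aE)$, which is the single-element statement and hence (i).
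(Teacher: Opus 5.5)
Your proof is correct, but it takes a genuinely different route from the paper's.

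\textbf{The paper's route.} The paper proves (iii)$\Rightarrow$(i). It first constructs elements $d_i\in\bigcap_{j\ne i}\mathfrak p_j\setminus\mathfrak p_i$. With these it establishes $Q(R)\cong\prod_i\operatorname{Frac}(R/\mathfrak p_i)$, with idempotents $e_i=d_i/d$. It then identifies $E$ with its image in $Q(R)\otimes_R E$ and sets $S_x=\{i:e_ix\ne0\}$. Since each $e_i(Q(R)\otimes_RE)$ is a vector space over a field, it reads off $\ann_R(x)=\bigcap_{i\in S_x}\mathfrak p_i$. The single representing element is $\sum_{i\in S_x}d_i$.

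\textbf{Your route.} You prove (ii)$\Rightarrow$(i) entirely inside $R$ and $E$. Defining $T$ directly from $J=\ann_R(x)$ makes the inclusion $J\subseteq\bigcap_{i\in T}\mathfrak p_i$ automatic. The reverse inclusion comes from one observation: $b+a$ lies outside every minimal prime, hence is regular, and $c(b+a)x=0$ because $bx=0$ and $ca\in\bigcap_i\mathfrak p_i=0$. Your $T$ is exactly the paper's $S_x$, and your $a$ plays the role of $\sum_{i\in S_x}d_i$.

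\textbf{Comparison.} Your argument is more elementary. It needs only prime avoidance and the fact that regular elements are those outside all minimal primes, with no construction of $Q(R)$ or its idempotents. The paper's argument gives a more structural picture: the index set is the set of factors of $Q(R)$ on which $x$ has a nonzero component. Its elements $d_i$ are also reused later, in Theorem~\ref{thm:finite-min-graph}, to realize every subset of minimal primes.

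\textbf{Presentation fixes.}
\begin{enumerate}
\item Dispose of $x=0$ explicitly by taking $a=0$.
\item Delete the exploratory paragraph about adjusting $b,c$ and replacing $c$ by $cb'$. Your final computation does not modify $c$ at all; it multiplies $c$ by the regular element $b+a$. As written, that paragraph misdescribes the argument that follows.
\end{enumerate}
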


\begin{proof}
If $E$ is annihilator multiplication and $cx=0$ with $c$ a
non-zero-divisor, write $\ann_R(x)=\ann_R I$ with $I$ finitely
generated. Then $cI=0$, so $I=0$ and $x=0$.
This proves (i) implies (ii), and the standard kernel description of
localization gives the equivalence of (ii) and (iii).

Write the minimal primes as $\mathfrak p_1,\ldots,\mathfrak p_s$.
For each $i$, choose
$d_i\in\bigcap_{j\ne i}\mathfrak p_j\setminus\mathfrak p_i$;
products of elements in $\mathfrak p_j\setminus\mathfrak p_i$
provide such a choice, and take $d_1=1$ when $s=1$.
Reducedness gives $d_id_j=0$ for $i\ne j$.
An element of $R$ is a non-zero-divisor exactly when it lies outside
all the $\mathfrak p_i$: one direction follows from the injection
$R\to\prod_i R/\mathfrak p_i$, and an element in $\mathfrak p_i$
is killed by the nonzero $d_i$.
Thus $d=\sum_i d_i$ is a non-zero-divisor.

For clarity, the familiar decomposition of the total quotient ring is
\[
 Q(R)\cong\prod_{i=1}^s\operatorname{Frac}(R/\mathfrak p_i).
\]
Indeed, the map into this product is injective. A fraction $a/b$ in
its $i$th coordinate, with $b\notin\mathfrak p_i$, and zero in the
other coordinates, is represented by
\[
 \frac{a d_i}{b d_i+\sum_{j\ne i}d_j}.
\]
Its denominator is outside every minimal prime, proving surjectivity.
The coordinate idempotents are $e_i=d_i/d$.

Assume (iii), identify $E$ with its image in $Q(R)\otimes_R E$, and
put $S_x=\{i:e_ix\ne0\}$.
Each coordinate is a vector space over $\operatorname{Frac}(R/\mathfrak p_i)$,
so
\[
 \ann_R(x)=\bigcap_{i\in S_x}\mathfrak p_i
 =\ann_R\left(\sum_{i\in S_x}d_i\right).
\]
Faithfulness changes the last expression into
$\ann_R((\sum_{i\in S_x}d_i)E)$.
An empty sum is zero and an empty intersection is $R$, covering $x=0$.
\end{proof}

\begin{example}\label{ex:infinite-min-obstruction}
The finiteness of the minimal-prime set cannot be omitted in general.
Let $R=\prod_{i\ge1}k$, let $J=\bigoplus_{i\ge1}k\varepsilon_i$,
and put $E=R\oplus R/J$.
Every non-zero-divisor of $R$ is a unit, so this faithful module is
regular-torsion-free. However, $\ann_R(0,1+J)=J$, while
$\ann_R J=0$ and $\ann_R(\ann_R J)=R\ne J$.
Proposition~\ref{prop:cyclic-embedding} rules out annihilator multiplication.
The ring is reduced and has infinitely many minimal primes, since its
infinitely many coordinate kernels are distinct maximal ideals in a
von Neumann regular ring and hence are minimal.
\end{example}

\section{Support, projective constructions, and associated primes}
\label{sec:support}

The annihilator multiplication condition describes the support of each
finitely generated submodule using ideal multiples. Finite annihilator
detection, rather than finite generation of the ambient module, is the
hypothesis needed for the support equalities. The projective, trace, and
associated-prime results impose only the finiteness assumptions stated
explicitly below.

\subsection{Support decompositions}

\begin{proposition}
\label{prop:support-decomposition}
Let $E$ be an annihilator multiplication $A$-module containing a
finitely generated submodule $F$ with $\ann_A(F)=\ann_A(E)$. If $L$
is a finitely generated submodule of $E$, then there exists a finitely generated
ideal $I=(a_1,\ldots,a_n)$ of $A$ such that
\[
\Supp_A(L)=\Supp_A(IE)=\bigcup_{i=1}^n \Supp_A(a_iE).
\]
\end{proposition}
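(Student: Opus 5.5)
The plan is to combine Lemma~\ref{lem:fg-submodule} with the support calculus of Lemma~\ref{lem:finite-detection-calculus}. Put $B=\ann_A(E)=\ann_A(F)$. Since $L$ is finitely generated, Lemma~\ref{lem:fg-submodule} supplies a finitely generated ideal $I=(a_1,\ldots,a_n)$ with $\ann_A(L)=\ann_A(IE)=(B:I)$, the last equality by \eqref{eq:colon-annihilator}. This $I$ is the ideal in the statement.

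For the first equality, $L$ is finitely generated, so $\Supp_A(L)=V(\ann_A(L))=V((B:I))$. Lemma~\ref{lem:finite-detection-calculus}, applied to $F$ and $I$, gives $\Supp_A(IE)=V((B:I))$. This is the only place where the finite detection hypothesis is used. It is needed because $IE$ need not be finitely generated, so its support is not automatically the closed set cut out by its annihilator.

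For the second equality, note that $IE=\sum_{i=1}^n a_iE$. Support is additive over finite sums: a sum of submodules localizes to zero exactly when each summand does, since localization is exact and $(\sum N_i)_{\mathfrak p}=\sum (N_i)_{\mathfrak p}$. Hence $\Supp_A(IE)=\bigcup_i\Supp_A(a_iE)$.

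As a consistency check, one can verify this via annihilators. Each $a_iE$ contains the finitely generated $a_iF$ with $\ann_A(a_iF)=\ann_A(a_iE)=(B:a_i)$. Lemma~\ref{lem:finite-detection-calculus} then gives $\Supp_A(a_iE)=V((B:a_i))$. The union of these sets is $V(\bigcap_i (B:a_i))=V((B:I))$, which agrees with the previous paragraph.

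I expect no real obstacle. The only subtle point is the one noted above: replacing $IE$ by the finitely generated $IF$ so that the support equals $V$ of the annihilator. Lemma~\ref{lem:finite-detection-calculus} already handles this.
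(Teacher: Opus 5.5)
Your proposal is correct and follows the paper's proof essentially step by step. Lemma~\ref{lem:fg-submodule} supplies $I$, Lemma~\ref{lem:finite-detection-calculus} identifies $\Supp_A(IE)$ with $V(\ann_A(IE))=\Supp_A(L)$, and localizing $IE=\sum_i a_iE$ gives the finite union; your annihilator-based consistency check is also valid but not needed.
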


\begin{proof}
By Lemma~\ref{lem:fg-submodule}, there is a finitely generated ideal $I$ with
$\ann_A(L)=\ann_A(IE)$. Write $I=(a_1,\ldots,a_n)$. The module $L$ is finitely generated, and
Lemma~\ref{lem:finite-detection-calculus} gives the corresponding support
formula for $IE$. Therefore
\[
\Supp_A(L)=V(\ann_A(L))=V(\ann_A(IE))=\Supp_A(IE).
\]
Moreover $IE=a_1E+\cdots+a_nE$. After localization at a prime ideal
$\mathfrak p$, the module $(IE)_{\mathfrak p}$ is nonzero exactly when
$(a_iE)_{\mathfrak p}$ is nonzero for at least one $i$. Hence
\[
\Supp_A(IE)=\bigcup_{i=1}^n \Supp_A(a_iE),
\]
as required.
\end{proof}

When the support of $L$ is irreducible, the preceding finite union has only one
essential term.

\begin{corollary}
\label{cor:irreducible-support}
Let $E$ be an annihilator multiplication $A$-module containing a
finitely generated submodule $F$ with $\ann_A(F)=\ann_A(E)$, and let
$L$ be a finitely generated submodule of $E$. Suppose that $\Supp_A(L)$ is
irreducible. If $I=(a_1,\ldots,a_n)$ is a finitely generated ideal satisfying
$\ann_A(L)=\ann_A(IE),$
then there exists $j\in\{1,\ldots,n\}$ such that $\Supp_A(L)=\Supp_A(a_jE).$
In particular,
$\sqrt{\ann_A(L)}=\sqrt{\ann_A(a_jE)}.$
\end{corollary}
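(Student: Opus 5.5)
The argument applies Proposition~\ref{prop:support-decomposition} to the given ideal $I$ and then uses irreducibility of $\Supp_A(L)$. The proof of that proposition did not depend on which ideal $I$ was chosen: it only used $\ann_A(L)=\ann_A(IE)$, together with Lemma~\ref{lem:finite-detection-calculus} for the finitely generated ideal $I$. So for the fixed $I=(a_1,\ldots,a_n)$ in the statement we obtain
\[
\Supp_A(L)=\Supp_A(IE)=\bigcup_{i=1}^n\Supp_A(a_iE).
\]
Moreover, each principal ideal $(a_i)$ is finitely generated. Hence Lemma~\ref{lem:finite-detection-calculus} gives $\Supp_A(a_iE)=V((B:a_i))$ with $B=\ann_A(E)$, and in particular $\Supp_A(a_iE)=V(\ann_A(a_iE))$ is closed in $\Spec(A)$.

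We then have $\Supp_A(L)$ written as a finite union of closed subsets $\Supp_A(a_iE)$, each contained in $\Supp_A(L)$. By irreducibility (a finite union of proper closed subsets of an irreducible space cannot be the whole space), some $j$ satisfies $\Supp_A(a_jE)=\Supp_A(L)$. One point to check is the degenerate case: if $L=0$ the support is empty, and conventionally the empty set is not irreducible, so we may assume $L\neq0$. The closedness is the only real input, and it is exactly what finite detection provides via Lemma~\ref{lem:finite-detection-calculus}. Without that lemma, $\Supp_A(a_jE)$ need not be closed.

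For the radical statement, we use the closed-set descriptions already obtained: $\Supp_A(L)=V(\ann_A(L))$ because $L$ is finitely generated, and $\Supp_A(a_jE)=V(\ann_A(a_jE))$. Equality of the vanishing loci gives equality of radicals, since $\sqrt{J}$ is the intersection of the primes in $V(J)$. I expect no serious obstacle; the only step needing care is confirming that the support formula of Lemma~\ref{lem:finite-detection-calculus} applies to each principal ideal $(a_j)$, which holds because the lemma is stated for every finitely generated ideal.
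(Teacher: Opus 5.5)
Your proposal is correct and follows essentially the same route as the paper. The paper also uses the union decomposition from Proposition~\ref{prop:support-decomposition}, the closedness of each $\Supp_A(a_iE)=V(\ann_A(a_iE))$ from Lemma~\ref{lem:finite-detection-calculus}, irreducibility to select $j$, and equality of vanishing loci for the radicals. You also make explicit two points the paper leaves implicit: the decomposition holds for any representing ideal $I$, and the case $L=0$ is excluded.
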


\begin{proof}
Proposition~\ref{prop:support-decomposition} gives
\[
\Supp_A(L)=\bigcup_{i=1}^n \Supp_A(a_iE).
\]
Lemma~\ref{lem:finite-detection-calculus} shows that every
$\Supp_A(a_iE)=V(\ann_A(a_iE))$ is closed in $\Spec(A)$. An irreducible closed set cannot be a finite union of proper closed
subsets. Hence $\Supp_A(L)=\Supp_A(a_jE)$ for some $j$. The annihilator descriptions of these supports give equality of the
displayed radicals.
\end{proof}

The same decomposition gives a useful test for containment in open subsets of
$\Spec(A)$.

\begin{corollary}
\label{cor:local-support-test}
Let $E$ be an annihilator multiplication $A$-module containing a
finitely generated submodule $F$ with $\ann_A(F)=\ann_A(E)$, and let
$L$ be a finitely generated submodule of $E$. Let $U\subseteq\Spec(A)$ be open.
Then the following conditions are equivalent:
\begin{enumerate}[label=\textup{(\roman*)}]
  \item $\Supp_A(L)\subseteq U$;
  \item for some finitely generated ideal $I=(a_1,\ldots,a_n)$ satisfying
  $\ann_A(L)=\ann_A(IE)$, one has
  \[
    \Supp_A(a_iE)\subseteq U\qquad\text{for every }i=1,\ldots,n.
  \]
\end{enumerate}
\end{corollary}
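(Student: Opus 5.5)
The argument is a direct consequence of Proposition~\ref{prop:support-decomposition}, and both implications pass through the identity
\[
\Supp_A(L)=\Supp_A(IE)=\bigcup_{i=1}^n\Supp_A(a_iE),
\]
valid for \emph{any} finitely generated ideal $I=(a_1,\ldots,a_n)$ with $\ann_A(L)=\ann_A(IE)$. The proof of Proposition~\ref{prop:support-decomposition} uses only this annihilator equality, the finite generation of $L$, and Lemma~\ref{lem:finite-detection-calculus}. It does not depend on how $I$ was produced, so the displayed equality holds for every such $I$.

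For (i)$\Rightarrow$(ii), I first invoke Lemma~\ref{lem:fg-submodule} to obtain some finitely generated ideal $I=(a_1,\ldots,a_n)$ with $\ann_A(L)=\ann_A(IE)$. The display then gives $\Supp_A(a_iE)\subseteq\Supp_A(L)\subseteq U$ for each $i$, which is (ii). In fact this inclusion holds for every admissible $I$, so (ii) could equally be stated with ``every'' in place of ``some''.

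For (ii)$\Rightarrow$(i), take the ideal $I$ provided by (ii). I apply the argument of Proposition~\ref{prop:support-decomposition} to this particular $I$. First, $L$ is finitely generated, so $\Supp_A(L)=V(\ann_A(L))$. Next, $V(\ann_A(L))=V(\ann_A(IE))$ by hypothesis. Finally, $V(\ann_A(IE))=\Supp_A(IE)$ by Lemma~\ref{lem:finite-detection-calculus}, which uses the finitely generated $F$. Then $IE=\sum_i a_iE$ gives $\Supp_A(IE)=\bigcup_i\Supp_A(a_iE)\subseteq U$.

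No step is difficult. The only point needing care is noticing that the support equality holds for an arbitrary admissible $I$, not just the one chosen in Proposition~\ref{prop:support-decomposition}. Openness of $U$ is never actually used.
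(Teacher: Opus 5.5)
Your proof is correct and follows the paper's route: the paper's proof is a one-line appeal to the decomposition $\Supp_A(L)=\bigcup_{i=1}^n\Supp_A(a_iE)$ from Proposition~\ref{prop:support-decomposition}. Your explicit check that this decomposition holds for every finitely generated $I$ with $\ann_A(L)=\ann_A(IE)$, and not only for the ideal the proposition constructs, is what the (ii)$\Rightarrow$(i) direction needs and what the paper uses implicitly; you are also right that the openness of $U$ plays no role.
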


\begin{proof}
This is immediate from the equality
\[
\Supp_A(L)=\bigcup_{i=1}^n \Supp_A(a_iE)
\]
in Proposition~\ref{prop:support-decomposition}.
\end{proof}

\subsection{Projective modules and tensor products}

We now turn to projective modules. For a projective $A$-module $P$
and $x\in P$, define its \emph{order ideal} by
$\mathcal O_P(x)=\{\varphi(x):\varphi\in\Hom_A(P,A)\}$.
Even when $P$ is not finitely generated, its dual basis has finite
support at each fixed element.

\begin{theorem}\label{thm:fg-projective}
Every projective $A$-module is annihilator multiplication.
More precisely, for every $x\in P$, the ideal $\mathcal O_P(x)$ is
finitely generated and
$\ann_A(x)=\ann_A(\mathcal O_P(x)P)$.
\end{theorem}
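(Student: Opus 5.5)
Fix a dual basis $\{(p_\lambda,f_\lambda)\}_{\lambda\in\Lambda}$ for $P$, with $p_\lambda\in P$, $f_\lambda\in\Hom_A(P,A)$, and $y=\sum_\lambda f_\lambda(y)p_\lambda$ for all $y\in P$, where only finitely many $f_\lambda(y)$ are nonzero. For the fixed $x\in P$, let $\lambda_1,\ldots,\lambda_n$ be the indices with $f_{\lambda_i}(x)\ne0$, and put $I=(f_{\lambda_1}(x),\ldots,f_{\lambda_n}(x))$. The first step is to show $\mathcal O_P(x)=I$, which gives finite generation. The inclusion $I\subseteq\mathcal O_P(x)$ is immediate. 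For the reverse, any $\varphi\in\Hom_A(P,A)$ satisfies
\[
\varphi(x)=\sum_{i=1}^n f_{\lambda_i}(x)\varphi(p_{\lambda_i})\in I.
\]
Since $\mathcal O_P(x)$ is the image of the $A$-linear evaluation map at $x$, it is an ideal.

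Next I will prove $\ann_A(x)=\ann_A(I)$. If $ax=0$, then $a f_{\lambda_i}(x)=f_{\lambda_i}(ax)=0$ for every $i$, so $a\in\ann_A(I)$. Conversely, if $aI=0$, the dual basis expansion gives $ax=\sum_i a f_{\lambda_i}(x)p_{\lambda_i}=0$. Hence $\ann_A(x)=\ann_A(\mathcal O_P(x))$.

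Finally I need $\ann_A(\mathcal O_P(x))=\ann_A(\mathcal O_P(x)P)$. The inclusion $\ann_A(I)\subseteq\ann_A(IP)$ is trivial. For the other inclusion, suppose $aIP=0$. The element $f_{\lambda_i}(x)p_{\lambda_i}$ lies in $IP$, so applying $f_{\lambda_i}$ to $a f_{\lambda_i}(x)p_{\lambda_i}$ does not by itself isolate the right quantity. Instead I will use $x$ directly. Each summand $f_{\lambda_i}(x)p_{\lambda_i}$ lies in $IP$, so $x\in IP$. Therefore $aIP=0$ gives $ax=0$, and $a\in\ann_A(x)=\ann_A(I)$.

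Combining the three steps gives $\ann_A(x)=\ann_A(\mathcal O_P(x)P)$ with $\mathcal O_P(x)$ finitely generated, so $P$ is annihilator multiplication. The only point needing care is the last inclusion. The observation $x\in\mathcal O_P(x)P$, which is the trace-ideal identity applied elementwise, resolves it without any faithfulness or finite generation hypothesis on $P$. The case $x=0$ gives $\mathcal O_P(x)=0$ and causes no exception.
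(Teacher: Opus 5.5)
Your proof is correct and follows essentially the same route as the paper: a dual basis with finite support at $x$ gives $\mathcal O_P(x)=(f_{\lambda_1}(x),\ldots,f_{\lambda_n}(x))$, and the two inclusions come from $a\varphi(x)=\varphi(ax)$ and from $x\in\mathcal O_P(x)P$. Your intermediate equality $\ann_A(x)=\ann_A(\mathcal O_P(x))$ is a harmless detour, since the paper passes directly from $ax=0$ to $a\mathcal O_P(x)P=0$. You also take the existence of a dual basis for an arbitrary projective module as known, whereas the paper justifies it briefly by writing $P$ as a direct summand of a free module.
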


\begin{proof}
A projective module is a direct summand of a free module, possibly of
infinite rank. Restricting coordinate maps and projecting basis vectors
gives a dual basis $(x_\lambda,\varphi_\lambda)_{\lambda\in\Lambda}$
such that, for every $y\in P$, only finitely many
$\varphi_\lambda(y)$ are nonzero and
$y=\sum_\lambda\varphi_\lambda(y)x_\lambda$; see also \cite{WK24}.
Fix $x$ and let $\Lambda_x$ be its finite support. Then
\[
 \mathcal O_P(x)=(\varphi_\lambda(x):\lambda\in\Lambda_x),
\]
since applying any functional to the dual-basis expression for $x$
gives one inclusion, and the other is immediate.
If $ax=0$, then $a\varphi(x)=\varphi(ax)=0$ for every functional,
so $a\mathcal O_P(x)P=0$. Conversely, $x\in\mathcal O_P(x)P$
by its dual-basis expression, so an element annihilating this ideal
multiple also annihilates $x$.
\end{proof}

\begin{corollary}
Every free module, with no restriction on its rank, is annihilator
multiplication.
\end{corollary}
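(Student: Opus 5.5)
This is immediate from Theorem~\ref{thm:fg-projective}, since every free module is projective. The plan is to make the dual-basis argument explicit in the free case, to emphasize that no finiteness of rank is needed.

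Let $P=A^{(\Lambda)}$ be free with basis $(x_\lambda)_{\lambda\in\Lambda}$, where $\Lambda$ is arbitrary. Let $\varphi_\lambda$ be the coordinate functionals. Fix $x\in P$. Only finitely many coordinates $\varphi_\lambda(x)$ are nonzero, say for $\lambda\in\Lambda_x$. Take
\[
 I=(\varphi_\lambda(x):\lambda\in\Lambda_x),
\]
which is a finitely generated ideal of $A$.

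The proof then consists of two inclusions.

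\begin{itemize}
\item If $ax=0$, then each coordinate $a\varphi_\lambda(x)$ vanishes, so $aI=0$ and hence $a\in\ann_A(IP)$.
\item Conversely, $x=\sum_{\lambda\in\Lambda_x}\varphi_\lambda(x)x_\lambda$ lies in $IP$. So any element annihilating $IP$ kills $x$.
\end{itemize}

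Thus $\ann_A(x)=\ann_A(IP)$, which is the defining condition. There is no real obstacle here. The only point to note is that finiteness of the support of $x$ is what makes $I$ finitely generated, and this holds in a direct sum of arbitrary rank.
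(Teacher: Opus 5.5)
Your proposal is correct and matches the paper's proof: the paper simply observes that every free module is projective and applies Theorem~\ref{thm:fg-projective}. Your explicit coordinate argument is just that theorem's dual-basis proof specialized to the standard basis, so it adds clarity but no new idea.
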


\begin{proof}
Every free module is projective.
\end{proof}

The exact tensor-product criterion will follow from the trace theorem
below; in particular, faithfulness of the projective factor will not be
needed for preservation.

\subsection{Hom modules and traces}

For $A$-modules $M,N$, the \emph{trace of $M$ in $N$} is
\[
\Tr_M(N)=\sum_{f\in\Hom_A(M,N)}f(M).
\]
An element of $A$ annihilates all maps $M\to N$ precisely when it annihilates
their images. Thus, without any finiteness hypotheses,
\begin{equation}\label{eq:hom-trace-annihilator}
\ann_A\bigl(\Hom_A(M,N)\bigr)=\ann_A\bigl(\Tr_M(N)\bigr).
\end{equation}
This equality and \eqref{eq:colon-annihilator} connect their annihilator
multiplication properties.

\begin{proposition}\label{prop:trace-hom}\label{prophom}
Let $M$ be a finitely generated $A$-module and let $N$ be an $A$-module.
If $\Tr_M(N)$ is annihilator multiplication, then $\Hom_A(M,N)$ is
annihilator multiplication.
\end{proposition}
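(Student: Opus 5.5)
The plan is to transport each element annihilator in $\Hom_A(M,N)$ to the annihilator of a finitely generated submodule of $\Tr_M(N)$, apply Lemma~\ref{lem:fg-submodule} there, and then move back using \eqref{eq:hom-trace-annihilator} together with the colon identity \eqref{eq:colon-annihilator}. Write $H=\Hom_A(M,N)$ and $T=\Tr_M(N)$.

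First, fix $f\in H$. An element $a\in A$ satisfies $af=0$ exactly when $af(m)=0$ for all $m\in M$, so
\[
\ann_A(f)=\ann_A\bigl(f(M)\bigr).
\]
Since $M$ is finitely generated, say by $m_1,\ldots,m_r$, the image $f(M)=Af(m_1)+\cdots+Af(m_r)$ is a finitely generated submodule of $N$. By the definition of the trace, $f(M)\subseteq T$. This is the only point where finite generation of $M$ is used, and it is the key step: without it, $f(M)$ need not be finitely generated and Lemma~\ref{lem:fg-submodule} would not apply.

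Next, since $T$ is annihilator multiplication, Lemma~\ref{lem:fg-submodule} gives a finitely generated ideal $I$ with $\ann_A(f(M))=\ann_A(IT)$. By \eqref{eq:hom-trace-annihilator}, $\ann_A(T)=\ann_A(H)$. Applying \eqref{eq:colon-annihilator} twice then yields
\[
\ann_A(IT)=(\ann_A(T):I)=(\ann_A(H):I)=\ann_A(IH).
\]
Combining the displays gives $\ann_A(f)=\ann_A(IH)$ with $I$ finitely generated, which is exactly the defining condition for $H$. The case $f=0$ is covered by taking $I=0$, or directly by Lemma~\ref{lem:fg-submodule} applied to $L=0$.

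I do not expect a real obstacle. The only care needed is to justify that $f(M)$ is finitely generated and contained in $T$, and that annihilators of modules with equal annihilators agree on ideal multiples, which the paper has already noted after \eqref{eq:colon-annihilator}.
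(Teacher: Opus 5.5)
Your proposal is correct and follows essentially the same route as the paper's proof. In both, you identify $\ann_A(f)$ with $\ann_A(f(M))$ for the finitely generated submodule $f(M)\subseteq \Tr_M(N)$, apply Lemma~\ref{lem:fg-submodule}, and then transfer $\ann_A(IT)=\ann_A(IH)$ via \eqref{eq:colon-annihilator} and \eqref{eq:hom-trace-annihilator}.
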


\begin{proof}
Put $T=\Tr_M(N)$ and $H=\Hom_A(M,N)$. For $f\in H$, its image is a
finitely generated submodule of $T$. Lemma~\ref{lem:fg-submodule} supplies
a finitely generated ideal $I$ with
$\ann_A(f)=\ann_A(f(M))=\ann_A(IT)$.
Equations~\eqref{eq:colon-annihilator} and
\eqref{eq:hom-trace-annihilator} give $\ann_A(IT)=\ann_A(IH)$,
which is the required representation.
\end{proof}

We recall the trace-ideal description for finitely generated projective
modules; see also \cite{Lam99}. A proof is included to make the subsequent
argument independent of reference numbering.

\begin{lemma}\label{lem:projective-trace}
Let $P$ be a finitely generated projective $A$-module. There is an idempotent
$e\in A$ such that $\Tr_P(A)=eA$, $P=eP$, and
$\Tr_P(N)=eN$ for every $A$-module $N$.
\end{lemma}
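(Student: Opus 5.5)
I will follow the dual-basis route already used in Theorem~\ref{thm:fg-projective}. Since $P$ is finitely generated projective, I choose a finite dual basis $x_1,\ldots,x_n\in P$ and $\varphi_1,\ldots,\varphi_n\in\Hom_A(P,A)$ with $y=\sum_i\varphi_i(y)x_i$ for all $y\in P$. Put $T=\Tr_P(A)$. From the dual basis, $T$ is generated by the finitely many elements $\varphi_i(x_j)$. Indeed, for any $f\in\Hom_A(P,A)$ and $y\in P$ one has $f(y)=\sum_i\varphi_i(y)f(x_i)$, and $f(x_i)=\sum_j\varphi_j(x_i)f(x_j)$; it is cleaner to argue that $f(x_i)\in T$ and $\varphi_i(y)\in T$, so $T$ is generated by the $\varphi_i(x_j)$ together with the $f(x_i)$. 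In fact $T=\sum_i\mathcal O_P(x_i)$, and each $\mathcal O_P(x_i)$ is finitely generated by Theorem~\ref{thm:fg-projective}, so $T$ is finitely generated.

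The key step is to prove $T^2=T$. For $y\in P$ and $f\in\Hom_A(P,A)$, we have $f(y)=\sum_i\varphi_i(y)f(x_i)$, and both factors in each term lie in $T$. Hence $T\subseteq T^2$, and so $T=T^2$. A finitely generated idempotent ideal is generated by an idempotent. By the determinant trick (Nakayama), there is $t\in T$ with $(1-t)T=0$; then $e=t$ satisfies $e^2=e$ and $T=eA$. This is the main point where care is needed, but it is the standard argument.

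Next, $P=eP$. Each $y=\sum_i\varphi_i(y)x_i$ has coefficients in $T=eA$, so $y\in eP$. For a general $N$, any $g\in\Hom_A(P,N)$ gives $g(y)=\sum_i\varphi_i(y)g(x_i)\in TN=eN$, so $\Tr_P(N)\subseteq eN$. Conversely, write $e=\sum_k f_k(y_k)$ with $f_k\in\Hom_A(P,A)$. For $n\in N$, the maps $p\mapsto f_k(p)n$ are in $\Hom_A(P,N)$ and $en=\sum_k f_k(y_k)n$ lies in their images, so $eN\subseteq\Tr_P(N)$. The case $P=0$ gives $e=0$ and is consistent with this argument.
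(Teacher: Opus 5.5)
Your proof is correct and follows essentially the same route as the paper: a finite dual basis, $T=T^2$ from $f(y)=\sum_i\varphi_i(y)f(x_i)$, the determinant trick to produce an idempotent generator $e$ with $(1-e)T=0$, and the maps $p\mapsto f_k(p)n$ to get $eN\subseteq\Tr_P(N)$. The only blemish is that your first paragraph wanders (the phrase ``together with the $f(x_i)$'' is not a finite generating set) before settling on the valid argument $T=\sum_i\mathcal O_P(x_i)$ with each $\mathcal O_P(x_i)$ finitely generated, so it could be tightened, but nothing is missing.
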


\begin{proof}
The case $P=0$ is immediate with $e=0$. Otherwise choose a finite dual basis
$p_1,\ldots,p_r$ and $\lambda_1,\ldots,\lambda_r\in\Hom_A(P,A)$, so
$p=\sum_i\lambda_i(p)p_i$ for every $p\in P$.
Put $T=\Tr_P(A)$. The dual-basis identities show that
$T=(\lambda_i(p_j):1\leq i,j\leq r)$ and $P=TP$.
Moreover, for $\alpha\in\Hom_A(P,A)$ and $p\in P$,
$\alpha(p)=\sum_i\lambda_i(p)\alpha(p_i)\in T^2$.
Hence $T=T^2$.

For completeness, a finitely generated ideal $T=T^2$ is generated by an
idempotent. Write $T=(t_1,\ldots,t_q)$ and
$t_i=\sum_j b_{ij}t_j$ with $b_{ij}\in T$.
The adjugate identity shows that $d=\det(\delta_{ij}-b_{ij})$ annihilates
$T$. Since $d\equiv1\pmod T$, write $d=1-e$ with $e\in T$.
Then $(1-e)T=0$, so $e^2=e$ and $T=eA$.
Consequently $P=eP$.

It follows that every map $P\to N$ has image in $eN$.
Conversely, write $e=\sum_{j=1}^s\alpha_j(z_j)$ with
$\alpha_j\in\Hom_A(P,A)$ and $z_j\in P$.
For $n\in N$, the maps $p\mapsto\alpha_j(p)n$ show that
$en=\sum_j\alpha_j(z_j)n$ belongs to $\Tr_P(N)$.
Thus $\Tr_P(N)=eN$.
\end{proof}

Projectivity supplies the reverse implication in
Proposition~\ref{prop:trace-hom}.

\begin{theorem}\label{thm:hom-trace-projective}
Let $P$ be a finitely generated projective $A$-module and let $N$ be an
$A$-module. Then $\Hom_A(P,N)$ is annihilator multiplication if and only if
$\Tr_P(N)$ is annihilator multiplication.
\end{theorem}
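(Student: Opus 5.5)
One direction is already available: since $P$ is finitely generated, Proposition~\ref{prop:trace-hom} shows that if $\Tr_P(N)$ is annihilator multiplication then so is $\Hom_A(P,N)$. It therefore remains to prove the converse, and the plan is to realize $\Tr_P(N)$ as a quotient of $\Hom_A(P,N)$ in a way that controls element annihilators. By Lemma~\ref{lem:projective-trace}, choose an idempotent $e$ with $\Tr_P(A)=eA$, $P=eP$ and $\Tr_P(N)=eN$. Put $H=\Hom_A(P,N)$ and $T=eN$. By \eqref{eq:hom-trace-annihilator}, $\ann_A(H)=\ann_A(T)$, so \eqref{eq:colon-annihilator} gives $\ann_A(IH)=\ann_A(IT)$ for every ideal $I$. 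Hence it suffices to show that every element annihilator of $T$ is an element annihilator of some finitely generated submodule of $H$; Lemma~\ref{lem:fg-submodule} then converts this into the required representation $\ann_A(IH)$.

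The key step is as follows. Fix $n\in T$, so $n=en$. Write $e=\sum_{j=1}^s\alpha_j(z_j)$ with $\alpha_j\in\Hom_A(P,A)$ and $z_j\in P$, and consider the maps $f_j\in H$ given by $f_j(p)=\alpha_j(p)n$. Let $L=\sum_j Af_j\subseteq H$. We claim that $\ann_A(L)=\ann_A(n)$.

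For one inclusion, if $an=0$ then $af_j=0$ for every $j$. For the other, if $af_j=0$ for all $j$, evaluate at $z_j$ and sum:
\[
 0=\sum_j a\alpha_j(z_j)n=aen=an.
\]
This proves the claim. Lemma~\ref{lem:fg-submodule} applied to $L\subseteq H$ then yields a finitely generated $I$ with $\ann_A(n)=\ann_A(L)=\ann_A(IH)=\ann_A(IT)$, as required.

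The main obstacle I expect is making sure that the annihilator comparison goes through the trace identity rather than any faithfulness assumption. This is handled by \eqref{eq:hom-trace-annihilator} together with \eqref{eq:colon-annihilator}, which require no finiteness hypotheses. The degenerate case $P=0$ gives $e=0$, so both modules are zero and both are trivially annihilator multiplication.
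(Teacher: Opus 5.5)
Your proof is correct and follows essentially the same route as the paper. Both arguments use the idempotent $e=\sum_j\alpha_j(z_j)$ from Lemma~\ref{lem:projective-trace} and the maps $p\mapsto\alpha_j(p)n$ to build a finitely generated submodule of $\Hom_A(P,N)$ with annihilator $\ann_A(n)$, then transfer via \eqref{eq:hom-trace-annihilator} and \eqref{eq:colon-annihilator}; the only cosmetic difference is that you check $\ann_A(L)=\ann_A(n)$ by evaluating at the $z_j$, whereas the paper notes $\sum_j g_j(P)=An$.
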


\begin{proof}
One implication is Proposition~\ref{prop:trace-hom}. Conversely, suppose
$H=\Hom_A(P,N)$ is annihilator multiplication. By
Lemma~\ref{lem:projective-trace}, write $T=\Tr_P(N)=eN$ and
$e=\sum_{j=1}^s\alpha_j(z_j)$ as in its proof.
For $n\in T$, define $g_j\in H$ by $g_j(p)=\alpha_j(p)n$ and set
$K=Ag_1+\cdots+Ag_s$. Each $g_j(P)$ is contained in $An$, while
$n=en=\sum_j g_j(z_j)$. Therefore
\[
\sum_{j=1}^s g_j(P)=An,
\qquad
\ann_A(K)=\ann_A(n).
\]
Since $K$ is finitely generated, Lemma~\ref{lem:fg-submodule} gives a
finitely generated ideal $I$ with $\ann_A(K)=\ann_A(IH)$.
Using \eqref{eq:hom-trace-annihilator} and \eqref{eq:colon-annihilator},
we obtain $\ann_A(n)=\ann_A(IH)=\ann_A(IT)$.
The zero-trace case is immediate. Hence $T$ is annihilator multiplication.
\end{proof}

The dual of a finitely generated projective module has the same trace
ideal. This yields both preservation and reflection for tensor products.

\begin{theorem}\label{thm:tensor-trace}\label{prop:tensor-faithful-projective}
Let $P$ be a finitely generated projective $A$-module, and write
$\Tr_P(A)=eA$ with $e^2=e$. For every $A$-module $N$,
\[
 P\otimes_A N\text{ is annihilator multiplication}
 \quad\iff\quad eN\text{ is annihilator multiplication}.
\]
In particular, tensoring by $P$ preserves annihilator multiplication.
If $P$ is faithful, it also reflects the property.
\end{theorem}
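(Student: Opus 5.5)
The plan is to identify $P\otimes_A N$ with a Hom module and then apply Theorem~\ref{thm:hom-trace-projective}. Put $P^*=\Hom_A(P,A)$. Since $P$ is finitely generated projective, the canonical map
\[
 P\otimes_A N\longrightarrow \Hom_A(P^*,N),\qquad p\otimes n\longmapsto(\alpha\mapsto\alpha(p)n),
\]
is an isomorphism. This holds for $P=A$, is additive in $P$, and therefore holds for direct summands of finite free modules. Moreover, $P^*$ is again finitely generated projective. The annihilator multiplication property depends only on the module up to isomorphism, so Theorem~\ref{thm:hom-trace-projective} applied to $P^*$ reduces the claim to showing that $P\otimes_A N$ has the property if and only if $\Tr_{P^*}(N)$ does.

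The next step, and the only real point to check, is that $\Tr_{P^*}(A)=\Tr_P(A)$. Take a dual basis $(p_i,\lambda_i)$ for $P$. Then $(\lambda_i,\widehat{p_i})$ is a dual basis for $P^*$, where $\widehat{p}(\alpha)=\alpha(p)$, because $\alpha=\sum_i\alpha(p_i)\lambda_i$. Every functional on $P^*$ is of the form $\widehat{p}$: the map $P\to P^{**}$ is an isomorphism for finitely generated projectives, by the same additivity reduction. Hence
\[
 \Tr_{P^*}(A)=\{\widehat p(\alpha)\}=\{\alpha(p)\}\text{ generates }\Tr_P(A)=eA.
\]
Lemma~\ref{lem:projective-trace}, applied to $P^*$ with the same idempotent $e$, then gives $\Tr_{P^*}(N)=eN$. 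This proves the displayed equivalence.

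For the consequences, suppose first that $N$ is annihilator multiplication. Then $\ann_A(eN)=\ann_A(N)+(1-e)A$. Given $x\in eN$, choose a finitely generated ideal $I$ with $\ann_A(x)=\ann_A(IN)$. Since $x=ex$, it follows that $\ann_A(x)=\ann_A(eIN)=\ann_A(I(eN))$. Hence $eN$ is annihilator multiplication, and tensoring by $P$ preserves the property.

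Now suppose $P$ is faithful. Since $P=eP$, we get $(1-e)P=0$, so faithfulness forces $e=1$. Then $eN=N$, and the property is reflected. The zero module and the case $P=0$ (where $e=0$) are trivial. I expect the only mildly delicate step to be the identification of the trace ideals of $P$ and $P^*$; everything else is formal.
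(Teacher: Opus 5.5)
Your proposal is correct and follows the paper's proof essentially step for step. Both arguments use the identification $P\otimes_A N\cong\Hom_A(P^*,N)$, the equality $\Tr_{P^*}(A)=\Tr_P(A)=eA$ via $P\cong P^{**}$, Theorem~\ref{thm:hom-trace-projective} together with Lemma~\ref{lem:projective-trace}, and the same idempotent arguments for preservation and for reflection when $P$ is faithful. The differences are cosmetic: you justify the tensor--Hom isomorphism by additivity rather than a dual basis, and in the preservation step you use $x=ex$ to obtain $\ann_A(x)=\ann_A(eIN)$, whereas the paper uses $(1-e)IN=0$ to obtain $IN=eIN$.
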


\begin{proof}
Put $P^*=\Hom_A(P,A)$. The bidual identification $P\cong P^{**}$
shows that the values defining $\Tr_{P^*}(A)$ are the same values
$\alpha(p)$ defining $\Tr_P(A)$.
A finite dual basis gives the natural isomorphism
$P\otimes_A N\cong\Hom_A(P^*,N)$, under which
$p\otimes n$ corresponds to $\alpha\mapsto\alpha(p)n$.
Apply Theorem~\ref{thm:hom-trace-projective} to $P^*$ and use
Lemma~\ref{lem:projective-trace} to identify the trace with $eN$.

If $N$ is annihilator multiplication and $x\in eN$, choose a finitely
generated ideal $I$ with $\ann_A(x)=\ann_A(IN)$.
Since $(1-e)x=0$, one has $(1-e)IN=0$, so
$IN=eIN=(eI)(eN)$. This proves that $eN$ is annihilator
multiplication, and hence proves preservation.
Finally, if $P$ is faithful, $P=eP$ implies $1-e\in\ann_A(P)=0$,
so $e=1$, giving reflection.
\end{proof}

\subsection{Associated primes}

For an $A$-module $M$, the set $\Ass_A(M)$ consists of the prime ideals
$\ann_A(x)$ with $0\ne x\in M$; write $\Ass(A)=\Ass_A(A)$.
No Noetherian hypothesis is imposed. In
\cite[Theorem~6(ii)]{KocAMM}, equality of $\Ass_A(E)$ and $\Ass(A)$ is
proved when $E$ is annihilator multiplication and contains an element with
zero annihilator. The following result answers
\cite[Question~1]{KocAMM} by replacing that last condition with faithfulness.

\begin{theorem}\label{thm:associated-primes}
Let $E$ be a faithful annihilator multiplication $A$-module. Then
\[
\Ass_A(E)=\Ass(A).
\]
\end{theorem}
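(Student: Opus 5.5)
The plan is to prove the two inclusions separately. In each, faithfulness converts the defining representation $\ann_A(x)=\ann_A(IE)$ into the ring-level equality $\ann_A(x)=\ann_A(I)$, by \eqref{eq:colon-annihilator} with $\ann_A(E)=0$. Primeness then picks out a single ring element that realizes the prime.

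\emph{The inclusion $\Ass_A(E)\subseteq\Ass(A)$.}
\begin{enumerate}[label=\textup{(\arabic*)}]
\item Let $\mathfrak p=\ann_A(x)$ be prime with $0\ne x\in E$.
\item Choose a finitely generated ideal $I=(b_1,\ldots,b_n)$ with $\mathfrak p=\ann_A(IE)=\ann_A(I)=\bigcap_i\ann_A(b_i)$. Here $I\ne0$, since otherwise $\mathfrak p=A$.
\item The product $\prod_i\ann_A(b_i)$ lies in the intersection, which is $\mathfrak p$. By primeness, some $\ann_A(b_j)\subseteq\mathfrak p$.
\item The reverse containment $\mathfrak p\subseteq\ann_A(b_j)$ holds because $\mathfrak p$ is the intersection. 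Hence $\mathfrak p=\ann_A(b_j)\in\Ass(A)$. This is the easy direction.
\end{enumerate}

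\emph{The inclusion $\Ass(A)\subseteq\Ass_A(E)$.}
\begin{enumerate}[label=\textup{(\arabic*)}]
\item Let $\mathfrak p=\ann_A(a)$ with $0\ne a\in A$. For each $e\in E$, write $\ann_A(e)=\ann_A(J_eE)=(0:J_e)$ with $J_e$ finitely generated.
\item We want a single $e$ with $\ann_A(ae)=\mathfrak p$. We always have $\ann_A(ae)=(\ann_A(e):a)=((0:J_e):a)=(0:aJ_e)\supseteq(0:a)=\mathfrak p$.
\item Suppose we can find $e$ and an element $j\in J_e\setminus\mathfrak p$. Then $\ann_A(ae)\subseteq(0:aj)$.
\item We also have $(0:aj)=\mathfrak p$. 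Indeed, $raj=0$ means $rj\in\ann_A(a)=\mathfrak p$, and $j\notin\mathfrak p$ forces $r\in\mathfrak p$.
\item So $\ann_A(ae)=\mathfrak p$, and $ae\ne0$ because $\mathfrak p$ is proper. Thus $\mathfrak p\in\Ass_A(E)$.
\end{enumerate}

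The main obstacle is producing an $e$ with $J_e\not\subseteq\mathfrak p$. A naive choice of any $e$ with $ae\ne0$ only gives $\ann_A(ae)\supseteq\mathfrak p$. Moreover, $\mathfrak p=\ann_A(aE)=\bigcap_e\ann_A(ae)$ is an infinite intersection that cannot be reduced to a finite one without chain conditions.

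I would resolve this by contradiction, using faithfulness once more. Suppose $J_e\subseteq\mathfrak p$ for every $e\in E$. Then $\ann_A(e)=(0:J_e)\supseteq(0:\mathfrak p)$ for all $e$, so $(0:\mathfrak p)E=0$, and faithfulness gives $(0:\mathfrak p)=0$. But $a\mathfrak p=0$ and $a\ne0$, so $0\ne a\in(0:\mathfrak p)$, a contradiction. Hence some $J_e$ escapes $\mathfrak p$, and the argument above applies.

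No Noetherian hypothesis and no element with zero annihilator is used, which is exactly what answers \cite[Question~1]{KocAMM}.
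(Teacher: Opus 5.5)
Your proof is correct and follows the paper's own argument: the first inclusion is the same prime-avoidance step on $\bigcap_i\ann_A(b_i)$, and the second uses the same mechanism, namely an element of the representing ideal lying outside $\mathfrak p$ forces $\ann_A(ae)=\mathfrak p$. The obstacle you flag is not real: the paper takes any $x$ with $ax\ne0$, which exists by faithfulness. Then $J_x\subseteq\mathfrak p=\ann_A(a)$ would give $a\in\ann_A(J_x)=\ann_A(x)$, so the naive choice already works, and your contradiction argument is just the contrapositive of this observation.
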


\begin{proof}
We include the argument for both inclusions. First let
$\mathfrak p=\ann_A(x)\in\Ass_A(E)$.
Choose a finitely generated ideal $I=(b_1,\ldots,b_r)$ with
$\ann_A(x)=\ann_A(IE)$. Faithfulness gives
\[
\mathfrak p=\ann_A(I)=\bigcap_{i=1}^r\ann_A(b_i).
\]
The product of the ideals $\ann_A(b_i)$ is contained in $\mathfrak p$.
Since $\mathfrak p$ is prime, some $\ann_A(b_j)$ is contained in
$\mathfrak p$, and the displayed intersection gives the reverse inclusion.
Thus $\mathfrak p=\ann_A(b_j)$ and $\mathfrak p\in\Ass(A)$.
This is the inclusion in \cite[Theorem~6(i)]{KocAMM}.

Conversely, let $\mathfrak p=\ann_A(a)\in\Ass(A)$ with $a\ne0$.
Faithfulness gives $aE\ne0$, so choose $x\in E$ with $ax\ne0$.
There is a finitely generated ideal $I$ with
$\ann_A(x)=\ann_A(IE)=\ann_A(I)$.
If $I\subseteq\mathfrak p$, then $aI=0$, giving $ax=0$, a contradiction.
Hence $I\not\subseteq\mathfrak p$.
Clearly $\mathfrak p\subseteq\ann_A(ax)$.
If $cax=0$, then $ca\in\ann_A(x)=\ann_A(I)$, and consequently
$cI\subseteq\ann_A(a)=\mathfrak p$.
Primality and $I\not\subseteq\mathfrak p$ imply $c\in\mathfrak p$.
Thus $\ann_A(ax)=\mathfrak p$, proving $\mathfrak p\in\Ass_A(E)$.
\end{proof}

Passing to the faithful quotient gives a version for arbitrary annihilator
multiplication modules.

\begin{corollary}\label{cor:associated-faithful-quotient}
Let $E$ be an annihilator multiplication $A$-module and set
$B=\ann_A(E)$. Then
\[
\Ass_A(E)=\Ass_A(A/B).
\]
\end{corollary}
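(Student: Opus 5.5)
The plan is to reduce to Theorem~\ref{thm:associated-primes} over the faithful quotient $R=A/B$, and then translate associated primes between $R$ and $A$. By Lemma~\ref{lem:quotient-change}, $E$ is an annihilator multiplication $R$-module, and it is faithful over $R$ by construction. The zero module needs separate handling. If $E=0$, then $B=A$ and both sides are empty, so assume $E\ne0$ and hence $B\ne A$. Theorem~\ref{thm:associated-primes} applied over $R$ then gives $\Ass_R(E)=\Ass(R)=\Ass_R(R)$.

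The key step is the following translation, valid for any $R$-module $M$ regarded as an $A$-module through $\pi:A\to R$. For every $x\in M$ we have $\ann_A(x)=\pi^{-1}(\ann_R(x))$, which is the identity used in the proof of Lemma~\ref{lem:quotient-change}. A prime of $A$ of the form $\ann_A(x)$ contains $B$, because $BM=0$. The inverse image under $\pi$ of an ideal $\mathfrak q$ of $R$ is prime if and only if $\mathfrak q$ is prime, since $A/\pi^{-1}(\mathfrak q)\cong R/\mathfrak q$. Moreover $\pi^{-1}$ is a bijection between ideals of $R$ and ideals of $A$ containing $B$. Together these facts show that
\[
\Ass_A(M)=\{\pi^{-1}(\mathfrak q):\mathfrak q\in\Ass_R(M)\}.
\]

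Apply this once with $M=E$ and once with $M=R=A/B$, where the $A$-module structure on $R$ is the usual one on $A/B$. Since $\Ass_R(E)=\Ass_R(R)$, the two sets of inverse images agree, so $\Ass_A(E)=\Ass_A(A/B)$. No step seems to be a real obstacle. The only points needing care are confirming that annihilators of elements pull back along $\pi$, and tracking the degenerate case $E=0$ (where $R$ is the zero ring and Theorem~\ref{thm:associated-primes} would not apply as stated).
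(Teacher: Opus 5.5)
Your proposal is correct and follows the paper's proof: handle $E=0$ separately, use Lemma~\ref{lem:quotient-change} to make $E$ a faithful annihilator multiplication $A/B$-module, apply Theorem~\ref{thm:associated-primes} over $A/B$, and pull associated primes back along $A\to A/B$ via $\ann_A(x)=\pi^{-1}(\ann_R(x))$. Your explicit check that $\pi^{-1}$ preserves and reflects primality is the detail the paper leaves implicit.
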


\begin{proof}
The zero-module case is immediate. Otherwise,
Lemma~\ref{lem:quotient-change} makes $E$ a faithful annihilator
multiplication $A/B$-module. Apply Theorem~\ref{thm:associated-primes}
over $A/B$ and take inverse images of the associated primes under
$A\to A/B$. For any $A/B$-module, element annihilators over $A$ are exactly
these inverse images.
\end{proof}

\section{Annihilator classes, graphs, and torsion}
\label{sec:graph}

This section studies three related descriptions of annihilator data:
a support-disjointness graph, a quotient semilattice, and the part of a
Gabriel filter detected by finitely generated submodules. These descriptions
record different information; in particular, graph adjacency does not recover
all annihilator ideals.

\subsection{The support-disjointness graph}

\begin{definition}
Let $E$ be a nonzero $A$-module. The \emph{support-disjointness graph} of $E$, denoted
by $\GammaSD(E)$, is the graph whose vertices are the annihilator classes
\[
[L]=\{K\subseteq E:K\text{ is finitely generated and } \ann_A(K)=\ann_A(L)\},
\]
where $L$ ranges over the nonzero finitely generated submodules of $E$. Two
distinct vertices $[L]$ and $[K]$ are adjacent when
\[
\Supp_A(L)\cap\Supp_A(K)=\varnothing.
\]
Equivalently,
\[
[L]\sim[K] \quad\iff\quad \ann_A(L)+\ann_A(K)=A.
\]
\end{definition}

The graph is simple and undirected. A vertex is \emph{isolated} if it has
no neighbours, and \emph{universal} if it is adjacent to every other vertex.
A graph is \emph{edgeless} if it has no edges and \emph{complete} if every
two distinct vertices are adjacent. A graph is \emph{connected} if any two
vertices can be joined by a finite path. The complete graph on one vertex
is denoted by $K_1$ and is connected by convention.

For annihilator multiplication modules, adjacency can be checked after replacing
submodules by finitely generated ideal multiples of the ambient module.

\begin{proposition}
\label{prop:graph-test}
Let $E$ be a nonzero annihilator multiplication $A$-module. Let $L$
and $K$ be nonzero finitely generated submodules of $E$. Choose finitely
generated ideals $I$ and $J$ of $A$ such that
\[
\ann_A(L)=\ann_A(IE),\qquad \ann_A(K)=\ann_A(JE).
\]
Then the corresponding vertices $[L]$ and $[K]$ of $\GammaSD(E)$ are adjacent
if and only if
\[
\ann_A(IE)+\ann_A(JE)=A.
\]
\end{proposition}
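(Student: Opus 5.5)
This is essentially a substitution argument, and I would carry it out in three steps. First, by the definition of $\GammaSD(E)$, adjacency of $[L]$ and $[K]$ is defined through the support condition, which is stated as equivalent to $\ann_A(L)+\ann_A(K)=A$. Second, the chosen ideals give $\ann_A(L)=\ann_A(IE)$ and $\ann_A(K)=\ann_A(JE)$. Third, substituting these into the sum gives $\ann_A(L)+\ann_A(K)=\ann_A(IE)+\ann_A(JE)$, so one equals $A$ exactly when the other does.

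The step that needs justification is the equivalence built into the definition, between disjointness of supports and comaximality of annihilators. I would verify it directly, since $L$ and $K$ are finitely generated. For finitely generated modules, $\Supp_A(L)=V(\ann_A(L))$ and $\Supp_A(K)=V(\ann_A(K))$. Their intersection is then
\[
V(\ann_A(L))\cap V(\ann_A(K))=V\bigl(\ann_A(L)+\ann_A(K)\bigr).
\]
This set is empty exactly when the sum is not contained in any prime ideal, that is, when it equals $A$. No finite-detection hypothesis on $E$ is needed here, because only the supports of the finitely generated modules $L$ and $K$ enter. This is worth remarking, since $IE$ itself need not be finitely generated.

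Distinctness of the vertices is a side point to address. If $[L]=[K]$, then $\ann_A(L)=\ann_A(K)$, and this common ideal is proper because $L\ne0$, so the sum is proper. Hence the condition $\ann_A(IE)+\ann_A(JE)=A$ automatically fails for equal vertices. This is consistent with the graph having no loops, so no extra hypothesis is needed.

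I expect no real obstacle. The only care needed is to use finite generation of $L$ and $K$, and not of $IE$ or $JE$, when passing between supports and annihilators.
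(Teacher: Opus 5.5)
Your proposal is correct and follows the paper's argument: the supports of the finitely generated modules $L$ and $K$ are $V(\ann_A(L))$ and $V(\ann_A(K))$, so their disjointness is comaximality of the annihilators, and you then substitute the chosen representations. Your remark that equal vertices automatically fail the condition, because $\ann_A(L)$ is proper, is a small point the paper leaves implicit.
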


\begin{proof}
Since $L$ and $K$ are finitely generated,
\[
\Supp_A(L)=V(\ann_A(L)),\qquad \Supp_A(K)=V(\ann_A(K)).
\]
Thus $\Supp_A(L)\cap\Supp_A(K)=\varnothing$ if and only if
\[
V(\ann_A(L)+\ann_A(K))=\varnothing,
\]
which is equivalent to $\ann_A(L)+\ann_A(K)=A$. Substituting the chosen
annihilator representations gives the result.
\end{proof}

\begin{theorem}
\label{thm:graph-edgeless}
Let $E$ be a nonzero $A$-module. Suppose
that the set
\[
\{\ann_A(L):L\subseteq E\text{ is a nonzero finitely generated submodule}\}
\]
is totally ordered by inclusion. Then $\GammaSD(E)$ has no edges.
\end{theorem}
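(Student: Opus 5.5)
Suppose, for a contradiction, that two distinct vertices $[L]$ and $[K]$ of $\GammaSD(E)$ are adjacent. By the definition of the graph, adjacency means $\ann_A(L)+\ann_A(K)=A$. Here $L$ and $K$ are nonzero finitely generated submodules, and distinctness of the vertices means $\ann_A(L)\neq\ann_A(K)$. No annihilator multiplication hypothesis is needed; only the total ordering is used.

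By the total ordering hypothesis, we may assume after relabelling that $\ann_A(L)\subseteq\ann_A(K)$. Then
\[
A=\ann_A(L)+\ann_A(K)=\ann_A(K),
\]
so $1\in\ann_A(K)$ and hence $K=0$. This contradicts the fact that $K$ is a nonzero submodule. Equivalently, in support language, $\Supp_A(K)\subseteq\Supp_A(L)$ for finitely generated modules, so disjointness forces $\Supp_A(K)=\varnothing$ and thus $K=0$. Therefore no edge exists.

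There is no real obstacle in this argument. The only points to check are that the adjacency criterion via annihilator sums is the one stated in the definition, and that nonzero vertices have proper annihilators. Both are immediate, and the distinctness of the vertices is never actually needed.
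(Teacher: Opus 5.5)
Your proposal is correct and is essentially the paper's argument. The paper also uses that nonzero submodules have proper annihilators and that, by the total order, the sum of the two annihilators equals one of them and so is still proper. Your contradiction framing and the support-language restatement are only cosmetic differences.
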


\begin{proof}
Let $[L]$ and $[K]$ be distinct vertices. Since $L$ and $K$ are nonzero, both
$\ann_A(L)$ and $\ann_A(K)$ are proper ideals. By the total-order hypothesis,
one of these annihilators is contained in the other. Hence their sum is equal
to one of them and is still proper. The definition of $\GammaSD(E)$ then
shows that $[L]$ and $[K]$ are not adjacent.
\end{proof}

\begin{corollary}
Under the hypotheses of Theorem~\ref{thm:graph-edgeless}, the complementary
support-intersection graph, with the same vertices and with edges corresponding
to nonempty intersections of supports, is complete.
\end{corollary}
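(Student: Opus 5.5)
The plan is to make the corollary an immediate consequence of Theorem~\ref{thm:graph-edgeless}: the complementary support-intersection graph is obtained from $\GammaSD(E)$ by complementing its edge set on the same vertex set. First I would set up the two graphs side by side. Both have as vertices the annihilator classes $[L]$, where $L$ ranges over the nonzero finitely generated submodules of $E$. For two distinct vertices $[L]$ and $[K]$ there are then two mutually exclusive possibilities: either $\Supp_A(L)\cap\Supp_A(K)=\varnothing$, or this intersection is nonempty.

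Before doing so, I would check that the intersection condition depends only on the classes and not on the chosen representatives. Since $L$ and $K$ are finitely generated, $\Supp_A(L)=V(\ann_A(L))$, and the same holds for $K$. Hence the intersection equals
\[
V\bigl(\ann_A(L)+\ann_A(K)\bigr),
\]
which depends only on $[L]$ and $[K]$. So the complementary graph is well defined, and a pair of distinct vertices is an edge in exactly one of the two graphs.

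Next I would invoke Theorem~\ref{thm:graph-edgeless}. Under the total-order hypothesis, $\GammaSD(E)$ has no edges. Therefore every pair of distinct vertices has nonempty support intersection. Equivalently, by the direct argument, $\ann_A(L)+\ann_A(K)$ is one of the two proper ideals, so $V$ of the sum is nonempty. This means every such pair is adjacent in the complementary graph, which is therefore complete.

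The only subtlety I expect is this well-definedness check together with the degenerate case of a single vertex. When there is only one vertex, the complementary graph is $K_1$, which is complete by the paper's convention. There is no real obstacle beyond these points.
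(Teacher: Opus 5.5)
Your proposal is correct and matches the paper's proof. The paper simply observes that the support-intersection graph is the complement of $\GammaSD(E)$ on the same vertex set, so the edgelessness from Theorem~\ref{thm:graph-edgeless} gives completeness. Your well-definedness check via $\Supp_A(L)=V(\ann_A(L))$ and your remark on the one-vertex case are harmless extra details that the paper leaves implicit.
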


\begin{proof}
The support-intersection graph is the complement of $\GammaSD(E)$ on the same
vertex set.
\end{proof}

\subsection{Prime modules and isolated vertices}

A nonzero $A$-module $E$ is a \emph{prime module} if
$ax=0$ with $a\in A$ and $0\ne x\in E$ implies $aE=0$.
Equivalently, $\ann_A(K)=\ann_A(E)$ for every nonzero submodule $K$ of $E$.
This is the condition that the zero submodule be prime.

\begin{theorem}\label{thm:graph-prime}\label{thm1}
Let $E$ be a nonzero $A$-module.
\begin{enumerate}[label=\textup{(\roman*)}]
\item $\GammaSD(E)=K_1$ if and only if $E$ is a prime module.
\item If $E$ is finitely generated, then $\GammaSD(E)$ is connected if and
only if $E$ is a prime module. In that case $\GammaSD(E)=K_1$.
\end{enumerate}
\end{theorem}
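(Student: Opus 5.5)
For (i), I will identify the vertex set of $\GammaSD(E)$ with the set of ideals $\ann_A(L)$, where $L$ runs over the nonzero finitely generated submodules of $E$. Distinct vertices have distinct annihilators, so $\GammaSD(E)=K_1$ means exactly that this set has one element.

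If $E$ is prime, every nonzero submodule $K$ satisfies $\ann_A(K)=\ann_A(E)$. In particular all nonzero finitely generated submodules share one annihilator, and the graph is $K_1$; it is nonempty because $E\ne0$.

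Conversely, suppose the graph is $K_1$, and let $ax=0$ with $x\ne0$. For any $y\in E$, the submodules $Ax$ and $Ax+Ay$ are nonzero and finitely generated, so they lie in the same class. Hence $\ann_A(Ax+Ay)=\ann_A(x)\ni a$, which gives $ay=0$. Therefore $aE=0$ and $E$ is prime. This is a direct check.

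For (ii), one direction follows from (i): prime gives $K_1$, which is connected by convention. For the converse, assume $E$ is finitely generated. The key observation is that the class $[E]$ is a vertex with annihilator $B=\ann_A(E)$. For every finitely generated nonzero $L$, we have $B\subseteq\ann_A(L)$, and both ideals are proper because $L\ne0$. Thus $\ann_A(E)+\ann_A(L)=\ann_A(L)\ne A$, so $[E]$ is an isolated vertex.

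A connected graph with an isolated vertex has only that vertex. So the graph is $K_1$, and (i) shows $E$ is prime. The concluding sentence "In that case $\GammaSD(E)=K_1$" is then immediate.

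I expect no serious obstacle. The only point needing care is the connectedness convention: $K_1$ must count as connected, which the paper stipulates, and the graph must have at least one vertex, which holds since $E\neq 0$. I will also check that the adjacency criterion via $\ann_A(L)+\ann_A(K)=A$ is what is used for the isolation argument. It is given in the definition as equivalent to support-disjointness for finitely generated submodules, since $\Supp_A(L)=V(\ann_A(L))$.
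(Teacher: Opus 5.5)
Your proposal is correct and follows essentially the paper's argument. In (i) you compare $Ax$ with $Ax+Ay$, whereas the paper intersects the common annihilator over all nonzero cyclic submodules; this is only a cosmetic difference. In (ii) both arguments observe that $[E]$ is an isolated vertex, so connectedness forces $K_1$.
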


\begin{proof}
If $\GammaSD(E)=K_1$, then all nonzero cyclic submodules have the same
annihilator. Its intersection over all nonzero elements of $E$ is
$\ann_A(E)$, so every nonzero element has annihilator $\ann_A(E)$.
The same is then true for every nonzero submodule, proving that $E$ is prime.
The converse follows directly from the definition.

For the second assertion, suppose $E$ is finitely generated.
The vertex $[E]$ is isolated: for any nonzero finitely generated
$K\subseteq E$, one has
$\ann_A(E)+\ann_A(K)=\ann_A(K)\ne A$.
Thus connectedness forces $\GammaSD(E)=K_1$, and the first assertion applies.
Conversely, a prime module gives $K_1$, which is connected.
\end{proof}

\begin{remark}\label{rem:finite-annihilator-detection}
The proof of Theorem~\ref{thm:graph-prime}(ii) only requires a finitely
generated submodule $F\subseteq E$ with $\ann_A(F)=\ann_A(E)$:
the vertex $[F]$ is then isolated. This weaker condition also appears in
Proposition~\ref{prop:lattice-quotient}.
\end{remark}

The finite-generation hypothesis cannot be omitted from the connectedness
statement, even though the vertices themselves are defined using finitely
generated submodules.

\begin{example}\label{ex:connected-infinite-torsion}
Let $\mathcal P$ be the set of prime numbers and consider the $\mathbb Z$-module
$E=\bigoplus_{p\in\mathcal P}\mathbb Z/p\mathbb Z$.
Then $\ann_{\mathbb Z}(E)=0$, whereas the summand $\mathbb Z/2\mathbb Z$
has annihilator $2\mathbb Z$, so $E$ is not prime.
A finitely generated submodule has nonzero coordinates at only finitely many
primes. If $S$ is that finite nonempty set, its annihilator is
$(\prod_{p\in S}p)\mathbb Z$.
Hence the vertices are indexed by the finite nonempty subsets of $\mathcal P$,
with adjacency corresponding to disjointness.
Given two such sets $S,T$, choose a prime $q\notin S\cup T$.
The vertex indexed by $\{q\}$ is adjacent to both. The graph is therefore
connected. Its diameter, the supremum of shortest-path lengths, is exactly
two: distinct intersecting sets give nonadjacent vertices.
This example concerns arbitrary modules; it is not an annihilator
multiplication module, since faithfulness would force
$\ann_{\mathbb Z}(IE)=\ann_{\mathbb Z}(I)$, which is either $0$ or
$\mathbb Z$, whereas $2\mathbb Z$ occurs as an element annihilator.
Example~\ref{ex:semilattice-gap} gives the same connectedness phenomenon
within faithful annihilator multiplication modules.
\end{example}

The existence of sums of finitely generated submodules imposes a strong
restriction on universal vertices.

\begin{proposition}\label{prop:graph-universal}
Let $E$ be a nonzero $A$-module.
\begin{enumerate}[label=\textup{(\roman*)}]
\item If $\GammaSD(E)$ has a universal vertex, then $\GammaSD(E)=K_1$.
\item $\GammaSD(E)$ is complete if and only if $\GammaSD(E)=K_1$, and
these conditions hold if and only if $E$ is a prime module.
\end{enumerate}
\end{proposition}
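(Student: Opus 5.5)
The plan is to prove (i) by combining a universal vertex with sums of finitely generated submodules. Then (ii) follows from (i) and Theorem~\ref{thm:graph-prime}(i).

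For (i), suppose $[L]$ is universal and, for contradiction, that some vertex $[K]\neq[L]$ exists. Consider the nonzero finitely generated submodule $L+K$, whose annihilator is $\ann_A(L)\cap\ann_A(K)$. There are two cases.

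\emph{Case $[L+K]\neq[L]$.} Universality gives $\ann_A(L)+\ann_A(L+K)=A$. But $\ann_A(L+K)\subseteq\ann_A(L)$, so this sum is just $\ann_A(L)$. That ideal is proper because $L\neq0$, a contradiction.

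\emph{Case $[L+K]=[L]$.} Then $\ann_A(L)\subseteq\ann_A(K)$. Since $[K]\neq[L]$, universality gives $\ann_A(L)+\ann_A(K)=A$. The sum equals $\ann_A(K)$, which is proper because $K\neq0$, again a contradiction.

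So the only vertex is $[L]$, and $\GammaSD(E)=K_1$. The case split on whether $[L+K]$ coincides with $[L]$ is the only delicate point: adjacency is only defined between distinct vertices, so the containment argument has to be applied to whichever pair of vertices is actually distinct.

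For (ii), suppose $\GammaSD(E)$ is complete. If it has at least two vertices, every vertex is universal, so (i) forces $K_1$, a contradiction. If it has one vertex, it is $K_1$; it is nonempty since $E\neq0$. Conversely, $K_1$ is complete. The equivalence of $\GammaSD(E)=K_1$ with $E$ being prime is exactly Theorem~\ref{thm:graph-prime}(i). I expect no real obstacle beyond the bookkeeping in (i).
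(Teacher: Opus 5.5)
Your proof is correct and follows the paper's argument essentially verbatim. The paper also passes to the finitely generated sum $L+N$ and notes that universality forces $[L+N]=[L]$, which is your first case. It then deduces $\ann_A(L)\subseteq\ann_A(N)$ and obtains $\ann_A(N)=A$ from adjacency, a contradiction. Part (ii) is derived exactly as you do, from (i) together with Theorem~\ref{thm:graph-prime}(i).
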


\begin{proof}
Let $[L]$ be universal and suppose $[N]\ne[L]$ is another vertex.
Since $L+N$ is finitely generated and
$\ann_A(L+N)+\ann_A(L)=\ann_A(L)\ne A$,
universality forces $[L+N]=[L]$.
Thus $\ann_A(L)\subseteq\ann_A(N)$.
But universality also gives
$A=\ann_A(L)+\ann_A(N)=\ann_A(N)$, contradicting $N\ne0$.
This proves the first assertion. Every vertex of a nonempty complete graph
is universal, and the second assertion follows from
Theorem~\ref{thm:graph-prime}(i).
\end{proof}

For a proper ideal $I$ of $A$, write
$J(I)=\bigcap_{\mathfrak m\in\Max(A),\ I\subseteq\mathfrak m}\mathfrak m$.
Thus $J(I)/I$ is the Jacobson radical of $A/I$.
For a nonzero module $E$, also put
\[
\Sigma_E=\Supp_A(E)\cap\Max(A),\qquad
\mathfrak j_E=\bigcap_{\mathfrak m\in\Sigma_E}\mathfrak m.
\]
The set $\Sigma_E$ is nonempty: a maximal ideal containing the annihilator
of any nonzero cyclic submodule belongs to it.

\begin{proposition}\label{prop:graph-isolated}\label{pisolated}
Let $E$ be a nonzero $A$-module and let $L\subseteq E$ be a nonzero finitely
generated submodule.
\begin{enumerate}[label=\textup{(\roman*)}]
\item If $\ann_A(L)\subseteq\mathfrak j_E$, then $[L]$ is isolated in
$\GammaSD(E)$. In particular, this holds if
$J(\ann_A(L))=J(\ann_A(E))$.
\item Suppose $\Sigma_E\subseteq\Ass_A(E)$. Then $[L]$ is isolated if and
only if $\ann_A(L)\subseteq\mathfrak j_E$.
If, in addition, $E$ is finitely generated, these conditions are equivalent
to $J(\ann_A(L))=J(\ann_A(E))$.
\end{enumerate}
\end{proposition}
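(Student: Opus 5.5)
The plan is to translate isolation into a statement about maximal ideals and supports. By definition, $[L]$ is isolated when no nonzero finitely generated $K\subseteq E$ with $[K]\ne[L]$ satisfies $\ann_A(L)+\ann_A(K)=A$.

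For (i), assume $\ann_A(L)\subseteq\mathfrak j_E$ and let $K$ be nonzero finitely generated. Then $\ann_A(K)$ is proper, so it lies in some maximal ideal $\mathfrak m$. This $\mathfrak m$ contains the annihilator of a nonzero cyclic submodule of $K$, so $\mathfrak m\in\Supp_A(E)\cap\Max(A)=\Sigma_E$. Hence $\ann_A(L)\subseteq\mathfrak j_E\subseteq\mathfrak m$, the sum $\ann_A(L)+\ann_A(K)$ lies in $\mathfrak m$, and $[L]$ has no neighbour.

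For the ``in particular'' clause, I will show $\mathfrak j_E\supseteq J(\ann_A(E))$ directly. Every $\mathfrak m\in\Sigma_E$ satisfies $E_{\mathfrak m}\ne0$, hence $\ann_A(E)\subseteq\mathfrak m$. So $\mathfrak j_E$ is an intersection over a subset of the maximal ideals defining $J(\ann_A(E))$, which gives the inclusion. Then $\ann_A(L)\subseteq J(\ann_A(L))=J(\ann_A(E))\subseteq\mathfrak j_E$.

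For (ii), assume $\Sigma_E\subseteq\Ass_A(E)$ and that $[L]$ is isolated, and suppose $\ann_A(L)\not\subseteq\mathfrak m$ for some $\mathfrak m\in\Sigma_E$. Write $\mathfrak m=\ann_A(x)$ with $x\ne0$ and put $K=Ax$. Then $\ann_A(L)+\ann_A(K)=\ann_A(L)+\mathfrak m=A$ by maximality. Moreover $[K]\ne[L]$, since otherwise $\ann_A(L)=\mathfrak m$, contradicting the assumption. So $[K]$ is a neighbour of $[L]$, a contradiction. Hence $\ann_A(L)\subseteq\mathfrak m$ for every $\mathfrak m\in\Sigma_E$, i.e.\ $\ann_A(L)\subseteq\mathfrak j_E$. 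The converse direction is part (i).

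For the finitely generated addendum, first note that $\Supp_A(E)=V(\ann_A(E))$ when $E$ is finitely generated. So $\Sigma_E$ is exactly the set of maximal ideals containing $\ann_A(E)$, and $\mathfrak j_E=J(\ann_A(E))$.
\begin{itemize}
\item If $J(\ann_A(L))=J(\ann_A(E))$, part (i) gives $\ann_A(L)\subseteq\mathfrak j_E$.
\item Conversely, suppose $\ann_A(L)\subseteq\mathfrak j_E=J(\ann_A(E))$. Every maximal ideal containing $\ann_A(L)$ also contains $\ann_A(E)$, since $\ann_A(E)\subseteq\ann_A(L)$. Every maximal ideal containing $\ann_A(E)$ contains $J(\ann_A(E))\supseteq\ann_A(L)$. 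So the two families of maximal ideals coincide, and therefore $J(\ann_A(L))=J(\ann_A(E))$.
\end{itemize}

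The main obstacle is the step in (ii) that builds a neighbour. It needs a cyclic submodule whose annihilator is exactly a maximal ideal in $\Sigma_E$, which is precisely what the hypothesis $\Sigma_E\subseteq\Ass_A(E)$ provides. It also requires checking that this neighbour is a distinct vertex.
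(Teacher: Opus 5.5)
Your proof is correct and follows essentially the same route as the paper's. In (i) you find a maximal ideal of $\Sigma_E$ containing $\ann_A(K)$, and for the ``in particular'' clause you use the inclusion $J(\ann_A(E))\subseteq\mathfrak j_E$. For the converse in (ii) you use a neighbour $[Ax]$ with $\ann_A(x)=\mathfrak m$, and in the finitely generated case you use $\mathfrak j_E=J(\ann_A(E))$.

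The only step you leave unjustified is why $\mathfrak m\supseteq\ann_A(K)$ contains the annihilator of some nonzero cyclic submodule of $K$. It holds because $\ann_A(K)$ is a finite intersection of the generators' annihilators and $\mathfrak m$ is prime. The paper instead gets $\mathfrak m\in\Sigma_E$ directly from $\Supp_A(K)=V(\ann_A(K))$ for finitely generated $K$.
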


\begin{proof}
Suppose $\ann_A(L)\subseteq\mathfrak j_E$ and let $[K]$ be a vertex.
Choose a maximal ideal $\mathfrak m$ containing $\ann_A(K)$.
Finite generation gives $K_{\mathfrak m}\ne0$, hence $E_{\mathfrak m}\ne0$.
Thus $\mathfrak m\in\Sigma_E$ and
$\ann_A(L)+\ann_A(K)\subseteq\mathfrak m\ne A$.
This proves isolation. Since
$\Sigma_E\subseteq V(\ann_A(E))\cap\Max(A)$, one has
$J(\ann_A(E))\subseteq\mathfrak j_E$.
Therefore $J(\ann_A(L))=J(\ann_A(E))$ implies
$\ann_A(L)\subseteq\mathfrak j_E$ without assuming that $E$ is finitely
generated.

Now assume $\Sigma_E\subseteq\Ass_A(E)$ and $[L]$ is isolated.
If $\ann_A(L)\not\subseteq\mathfrak m$ for some $\mathfrak m\in\Sigma_E$,
choose $0\ne x\in E$ with $\ann_A(x)=\mathfrak m$.
Then $\ann_A(L)+\ann_A(x)=A$, so $[L]$ and $[Ax]$ are distinct adjacent
vertices, a contradiction. This proves the converse.

Finally, if $E$ is finitely generated, then
$\Sigma_E=V(\ann_A(E))\cap\Max(A)$ and
$\mathfrak j_E=J(\ann_A(E))$.
Since $\ann_A(E)\subseteq\ann_A(L)$, the inclusion
$\ann_A(L)\subseteq J(\ann_A(E))$ holds precisely when these two
annihilators are contained in the same maximal ideals, equivalently when
their $J$-ideals are equal.
\end{proof}

The associated-prime hypothesis in the converse cannot be omitted in general.

\begin{example}\label{ex:isolated-hypothesis}
Let $A=k[X,Y]$, where $k$ is algebraically closed, and put $E=A/(XY)$.
The module is cyclic and hence annihilator multiplication: for
$x=\overline f$, the ideal $I=(f)$ satisfies $IE=Ax$.
Since $(XY)=(X)\cap(Y)$, the annihilator of each nonzero element of $E$
is one of $(XY),(X),(Y)$. These three ideals all occur, and their finite
intersections are again among them. Thus they are exactly the annihilators
of nonzero finitely generated submodules, and
$\Ass_A(E)=\{(X),(Y)\}$.
All three annihilators are contained in the maximal ideal $(X,Y)$, so
$\GammaSD(E)$ is edgeless.

Take $L=A\overline X$, for which $\ann_A(L)=(Y)$.
The maximal ideal $(X,Y-1)$ belongs to $\Sigma_E$ but does not contain $(Y)$.
Consequently $[L]$ is isolated although
$\ann_A(L)\not\subseteq\mathfrak j_E$.
More precisely, intersecting the point maximal ideals on the two coordinate
axes gives $\mathfrak j_E=(XY)=J(\ann_A(E))$:
a polynomial vanishing at all points of both axes is divisible by both
$X$ and $Y$. Since $(X)$ and $(Y)$ are not maximal,
$\Sigma_E\not\subseteq\Ass_A(E)$, as required.
\end{example}

Applying the isolated-vertex criterion to every vertex gives an edgelessness
test.

\begin{corollary}\label{cor:graph-edgeless-support}
Let $E$ be a nonzero $A$-module with $\Sigma_E\subseteq\Ass_A(E)$.
Then $\GammaSD(E)$ is edgeless if and only if
$\ann_A(L)\subseteq\mathfrak j_E$ for every nonzero finitely generated
submodule $L\subseteq E$.
If $E$ is finitely generated, this is equivalent to
$J(\ann_A(L))=J(\ann_A(E))$ for every such $L$.
\end{corollary}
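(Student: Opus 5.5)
The plan is to apply Proposition~\ref{prop:graph-isolated} vertex by vertex. A graph is edgeless exactly when every vertex is isolated, and the vertices of $\GammaSD(E)$ are the classes $[L]$ with $L$ ranging over the nonzero finitely generated submodules of $E$. So the first assertion should follow from the equivalence in Proposition~\ref{prop:graph-isolated}(ii), which under the standing hypothesis $\Sigma_E\subseteq\Ass_A(E)$ says that $[L]$ is isolated if and only if $\ann_A(L)\subseteq\mathfrak j_E$.

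The one point to check is that the condition depends only on the vertex and not on the representative $L$. This is immediate: two representatives of the same class have the same annihilator by definition, so the condition $\ann_A(L)\subseteq\mathfrak j_E$ is well defined on vertices. Quantifying over all $L$ then amounts to quantifying over all vertices.

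For the finitely generated case, Proposition~\ref{prop:graph-isolated}(ii) further gives that, when $E$ is finitely generated, $\ann_A(L)\subseteq\mathfrak j_E$ is equivalent to $J(\ann_A(L))=J(\ann_A(E))$. Applying this for each $L$ yields the second equivalence. Alternatively, one can see it directly: $\mathfrak j_E=J(\ann_A(E))$ because $\Supp_A(E)=V(\ann_A(E))$ for finitely generated $E$, and $\ann_A(E)\subseteq\ann_A(L)$.

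I expect no real obstacle, since everything is already contained in the proposition. The only care needed is that the first equivalence uses only $\Sigma_E\subseteq\Ass_A(E)$, and finite generation enters only in the second.
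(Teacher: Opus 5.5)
Your proposal is correct and follows the paper's own argument: edgelessness means every vertex is isolated, and you apply Proposition~\ref{prop:graph-isolated}(ii) to each vertex, including its finitely generated clause. Your remark that the condition depends only on the annihilator class is a harmless extra check.
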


\begin{proof}
A graph is edgeless precisely when all its vertices are isolated.
Apply Proposition~\ref{prop:graph-isolated}.
\end{proof}

\subsection{The annihilator semilattice}

Let $\FGId(A)$ be the set of finitely generated ideals of $A$, including
$0$. Define
\[
\begin{aligned}
\mathcal A_E^{\mathrm{fg}}
 &=\{\ann_A(L):L\subseteq E\text{ is finitely generated}\},\\
\mathcal B_E^{\mathrm{fg}}
 &=\{\ann_A(IE):I\in\FGId(A)\}.
\end{aligned}
\]
The zero submodule is included in $\mathcal A_E^{\mathrm{fg}}$.
The two collections must be distinguished because $IE$ need not be finitely
generated when $I$ is finitely generated. A \emph{semilattice} here means a
set with an associative, commutative, idempotent binary operation; the
operations under consideration are ideal sum and ideal intersection.

\begin{proposition}\label{prop:lattice-quotient}
Let $E$ be an annihilator multiplication $A$-module and define
$I\sim_E J$ on $\FGId(A)$ by $\ann_A(IE)=\ann_A(JE)$.
Then ideal sum induces a well-defined operation on $\FGId(A)/{\sim_E}$,
and the map
\[
\Phi:\FGId(A)/{\sim_E}\longrightarrow\mathcal B_E^{\mathrm{fg}},
\qquad [I]\longmapsto\ann_A(IE),
\]
is a bijection satisfying
$\Phi([I+J])=\Phi([I])\cap\Phi([J])$.
Moreover, $\mathcal A_E^{\mathrm{fg}}\subseteq\mathcal B_E^{\mathrm{fg}}$,
and the following conditions are equivalent:
\begin{enumerate}[label=\textup{(\roman*)}]
\item $\mathcal A_E^{\mathrm{fg}}=\mathcal B_E^{\mathrm{fg}}$;
\item there exists a finitely generated submodule $F\subseteq E$ such that
$\ann_A(F)=\ann_A(E)$.
\end{enumerate}
In particular, if $E$ is finitely generated, $\Phi$ is a bijection onto
$\mathcal A_E^{\mathrm{fg}}$.
\end{proposition}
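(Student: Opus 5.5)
The plan is to handle the semilattice statement first; it holds for any module and uses only \eqref{eq:colon-annihilator}. Put $B=\ann_A(E)$, so $\ann_A(IE)=(B:I)$. The relation $\sim_E$ is the kernel of $I\mapsto\ann_A(IE)$, so $\Phi$ is well defined and injective. It is surjective by the definition of $\mathcal B_E^{\mathrm{fg}}$. Since $(I+J)E=IE+JE$, we have
\[
\ann_A((I+J)E)=\ann_A(IE)\cap\ann_A(JE).
\]
This identity shows that the class of $I+J$ depends only on the classes of $I$ and $J$. Hence ideal sum is well defined on $\FGId(A)/{\sim_E}$, and it satisfies $\Phi([I+J])=\Phi([I])\cap\Phi([J])$. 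Associativity, commutativity and idempotence are inherited from ideal sum.

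The inclusion $\mathcal A_E^{\mathrm{fg}}\subseteq\mathcal B_E^{\mathrm{fg}}$ is exactly Lemma~\ref{lem:fg-submodule}, with $I=0$ covering the zero submodule. This is the only place where the annihilator multiplication hypothesis is used.

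For (i)$\Rightarrow$(ii), note that $B=\ann_A(AE)$ with $A=(1)\in\FGId(A)$. So $B\in\mathcal B_E^{\mathrm{fg}}=\mathcal A_E^{\mathrm{fg}}$, which gives a finitely generated $F$ with $\ann_A(F)=B$.

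For (ii)$\Rightarrow$(i), I will prove $\mathcal B_E^{\mathrm{fg}}\subseteq\mathcal A_E^{\mathrm{fg}}$. Take $I=(a_1,\ldots,a_n)$ finitely generated. Then $IF$ is generated by the finitely many elements $a_ix_j$, where the $x_j$ generate $F$, so $IF$ is a finitely generated submodule of $E$. Lemma~\ref{lem:finite-detection-calculus} gives $\ann_A(IF)=(B:I)=\ann_A(IE)$, and this lies in $\mathcal A_E^{\mathrm{fg}}$. The one point needing care is that $IF$ is finitely generated, and that is immediate.

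For the final assertion, when $E$ is finitely generated take $F=E$ in (ii). Then (i) holds, so $\Phi$ maps bijectively onto $\mathcal B_E^{\mathrm{fg}}=\mathcal A_E^{\mathrm{fg}}$.

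No step is a real obstacle. The only subtlety is to keep $IE$ separate from a finitely generated submodule, and to take $IF$ rather than $IE$ as the witness in (ii)$\Rightarrow$(i).
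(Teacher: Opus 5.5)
Your proof is correct and follows the paper's argument essentially step for step. You use the identity $\ann_A((I+J)E)=\ann_A(IE)\cap\ann_A(JE)$ for the semilattice part, Lemma~\ref{lem:fg-submodule} for $\mathcal A_E^{\mathrm{fg}}\subseteq\mathcal B_E^{\mathrm{fg}}$, and $I=A$ for (i)$\Rightarrow$(ii). For the converse you take the finitely generated witness $IF$, with $\ann_A(IF)=(\ann_A(E):I)=\ann_A(IE)$, and you also spell out the final assertion (take $F=E$), which the paper leaves implicit.
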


\begin{proof}
The identity
\[
\ann_A((I+J)E)=\ann_A(IE)\cap\ann_A(JE)
\]
shows that $\sim_E$ is compatible with ideal sum, and proves the asserted
compatibility of $\Phi$ with the operations.
The map is well-defined, injective, and surjective onto
$\mathcal B_E^{\mathrm{fg}}$ by construction.
Lemma~\ref{lem:fg-submodule} gives
$\mathcal A_E^{\mathrm{fg}}\subseteq\mathcal B_E^{\mathrm{fg}}$.

If the two collections coincide, take $I=A$ to obtain a finitely generated
$F\subseteq E$ with $\ann_A(F)=\ann_A(E)$.
Conversely, suppose such $F$ exists. For each finitely generated ideal $I$,
the submodule $IF\subseteq E$ is finitely generated and
\[
\ann_A(IF)=(\ann_A(F):I)=(\ann_A(E):I)=\ann_A(IE).
\]
Hence $\mathcal B_E^{\mathrm{fg}}\subseteq\mathcal A_E^{\mathrm{fg}}$.
\end{proof}

The following example shows why an unconditional bijection onto
$\mathcal A_E^{\mathrm{fg}}$ is not available.

\begin{example}\label{ex:semilattice-gap}
Let $k$ be a field, put $A=\prod_{i\geq1}k$, and let $\varepsilon_i\in A$
be the $i$th coordinate idempotent. Consider the ideal
$E=\bigoplus_{i\geq1}k\varepsilon_i$ as an $A$-module.
For a finite set $S$ of positive integers, write
$e_S=\sum_{i\in S}\varepsilon_i$.
If $x\in E$ has nonzero coordinates exactly at $S$, then
$Ax=e_SE$. Thus $E$ is annihilator multiplication, with the principal ideal
$I=Ae_S$ representing $\ann_A(x)$.
The module is faithful, since an element annihilating every
$\varepsilon_i$ has all coordinates zero.

Every finitely generated submodule $L\subseteq E$ has its coordinates in a
finite set $S$, and, taking $S$ to be the union of the nonzero coordinates
of generators, one has
$\ann_A(L)=(1-e_S)A\ne0$.
Consequently $\ann_A(E)=0$ lies in $\mathcal B_E^{\mathrm{fg}}$ but not
in $\mathcal A_E^{\mathrm{fg}}$.
In particular, the proposed value $\Phi([A])=0$ cannot belong to the latter
collection.

The nonzero annihilator classes are indexed by finite nonempty sets $S$,
and two vertices are adjacent precisely when their index sets are disjoint.
A singleton outside the union of two given finite sets supplies a path of
length two. Thus $\GammaSD(E)$ is connected and has diameter two, while
$E$ is not prime. This also shows that the finiteness assumption in
Theorem~\ref{thm:graph-prime}(ii) cannot be dropped even for faithful
annihilator multiplication modules.
\end{example}

\subsection{Graph rigidity and finite minimal-prime formulas}

Finite annihilator detection makes the graph independent of the particular
faithful annihilator multiplication module. Only its faithful quotient
ring remains.

\begin{theorem}\label{thm:graph-rigidity}
Let $E$ be a nonzero annihilator multiplication $A$-module.
Put $R=A/\ann_A(E)$ and suppose that its annihilator ideals satisfy
the ascending chain condition. Then their identification gives a
canonical graph isomorphism
\[
 \GammaSD(E)\cong\GammaSD(R).
\]
Here the graph on the right is computed for the regular $R$-module $R$.
Consequently $\GammaSD(E)$ is connected if and only if $R$ is a domain;
in this case the graph has one vertex.
\end{theorem}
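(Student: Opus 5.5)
The plan is to reduce everything to the faithful quotient and then compare the two vertex sets as sets of ideals of $R$. Put $B=\ann_A(E)$. For any subset $X\subseteq E$, $\ann_A(X)$ is the inverse image of $\ann_R(X)$ under $A\to R$. Every ideal occurring as a vertex annihilator contains $B$. Passing to images in $R$ is therefore a bijection on such ideals, and it preserves and reflects comaximality, because $I+J=A$ iff $(I+J)/B=R$ when $B\subseteq I\cap J$. So $\GammaSD(E)$ computed over $A$ equals the graph computed over $R$. By Lemma~\ref{lem:quotient-change}, we may assume from now on that $A=R$ and that $E$ is a faithful annihilator multiplication $R$-module.

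\textbf{Step 1: the vertex set of $\GammaSD(E)$.} Each vertex is determined by its annihilator, so the vertices of $\GammaSD(E)$ are in bijection with
\[
\mathcal A_E^{\mathrm{fg}}\setminus\{R\},
\]
since $\ann_R(L)=R$ iff $L=0$. By Theorem~\ref{thm:finite-detection-acc}, the ascending chain condition on annihilator ideals of $R$ gives a finitely generated $F\subseteq E$ with $\ann_R(F)=0$. Proposition~\ref{prop:lattice-quotient} then yields $\mathcal A_E^{\mathrm{fg}}=\mathcal B_E^{\mathrm{fg}}$. Faithfulness and \eqref{eq:colon-annihilator} give $\ann_R(IE)=(0:I)=\ann_R(I)$. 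Hence
\[
\mathcal A_E^{\mathrm{fg}}=\{\ann_R(I): I\in\FGId(R)\}.
\]

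\textbf{Step 2: the vertex set of $\GammaSD(R)$.} The finitely generated submodules of the regular module $R$ are exactly the finitely generated ideals. Therefore $\mathcal A_R^{\mathrm{fg}}$ is the same set $\{\ann_R(I):I\in\FGId(R)\}$. So both graphs have vertex sets identified with the same collection of proper ideals of $R$. Adjacency in each graph is comaximality of these ideals, by the equivalent form in the definition. Sending $[L]$ to the class in $R$ with the same annihilator is thus a canonical graph isomorphism.

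\textbf{Step 3: connectedness.} Here $R\ne0$ because $E\ne0$. The regular module $R$ is finitely generated, so Theorem~\ref{thm:graph-prime}(ii) applies to it. Hence $\GammaSD(R)$ is connected iff $R$ is a prime $R$-module, and in that case the graph is $K_1$. Since $\ann_R(R)=0$, $R$ is a prime module iff $ax=0$ with $x\ne0$ forces $a=0$, that is, iff $R$ is a domain. Transporting this along the isomorphism gives the claim for $\GammaSD(E)$.

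The main obstacle is Step 1, namely showing that the annihilators of finitely generated submodules of $E$ exhaust all $\ann_R(I)$, including $0$. This is exactly where the ascending chain condition enters, through Theorem~\ref{thm:finite-detection-acc} and Proposition~\ref{prop:lattice-quotient}. Example~\ref{ex:semilattice-gap} shows that the identification fails without it.
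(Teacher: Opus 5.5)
Your proposal is correct and follows essentially the same route as the paper. You pass to the faithful quotient and use Theorem~\ref{thm:finite-detection-acc} with Proposition~\ref{prop:lattice-quotient} to identify both vertex sets with $\{\ann_R(I):I\in\FGId(R)\}\setminus\{R\}$, adjacency being comaximality, and then apply Theorem~\ref{thm:graph-prime}(ii) to the regular module $R$. The only differences are cosmetic: you state the faithfulness identity $\ann_R(IE)=\ann_R(I)$ explicitly, and you reduce from $A$ to $R$ at the start rather than at the end.
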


\begin{proof}
Pass to $R$. Theorem~\ref{thm:finite-detection-acc} supplies a finitely
generated faithful submodule $F\subseteq E$.
Proposition~\ref{prop:lattice-quotient} gives
\[
 \{\ann_R(L):L\subseteq E\text{ is finitely generated}\}
 =\{\ann_R(I):I\subseteq R\text{ is finitely generated}\}.
\]
A member equals $R$ exactly when its represented submodule is zero.
After deleting this member, these are the two vertex sets, and in each
case adjacency is comaximality of annihilators.
Taking inverse images under $A\to R$ preserves comaximality, so this
also proves the assertion over $A$.
The regular module $R$ is finitely generated, and it is a prime module
exactly when $R$ is a domain. Apply Theorem~\ref{thm:graph-prime}.
\end{proof}

We first recall the graph-theoretic terminology used below.
Let $G$ be a finite simple graph with at least one vertex, and denote
its vertex set and edge set by $V(G)$ and $E(G)$, respectively.
The \emph{degree} of a vertex $v\in V(G)$, denoted by $\deg_G(v)$,
is the number of vertices adjacent to $v$; we write $\deg(v)$ when
the graph is clear from the context. A vertex of degree zero is
called \emph{isolated}.
A \emph{clique} in $G$ is a nonempty set of vertices any two distinct
members of which are adjacent. The \emph{clique number} $\omega(G)$
is the maximum cardinality of a clique in $G$.
A \emph{proper vertex coloring} of $G$ is an assignment of colors
to its vertices such that adjacent vertices receive different colors.
The \emph{chromatic number} $\chi(G)$ is the minimum number of colors
needed for a proper vertex coloring of $G$.
Finally, $|E(G)|$ denotes the number of edges of $G$.

For a reduced ring with finitely many minimal primes, the next theorem
determines the support-disjointness graph and these invariants from
the pairwise comaximality of the minimal primes.

\begin{theorem}\label{thm:finite-min-graph}
Let $R$ be reduced with distinct minimal primes
$\mathfrak p_1,\ldots,\mathfrak p_s$, and let $E$ be a faithful
annihilator multiplication $R$-module. Then $\GammaSD(E)$ has exactly
$2^s-1$ vertices, indexed by the nonempty subsets $S\subseteq\{1,\ldots,s\}$
through the annihilator $\bigcap_{i\in S}\mathfrak p_i$.
Distinct $S,T$ are adjacent exactly when
\[
 \mathfrak p_i+\mathfrak p_j=R
 \quad\text{for every }i\in S\text{ and }j\in T.
\]
Writing
$C(S)=\{j:\mathfrak p_i+\mathfrak p_j=R\text{ for every }i\in S\}$,
one has $\deg(S)=2^{|C(S)|}-1$.
Let $H$ be the graph on $\{1,\ldots,s\}$ whose edges are the pairs
of comaximal minimal primes. Then
\[
 \omega(\GammaSD(E))=\omega(H),\qquad
 \chi(\GammaSD(E))=\chi(H).
\]
\end{theorem}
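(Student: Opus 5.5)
Since $R$ is reduced with finitely many minimal primes, it satisfies the ascending chain condition on annihilator ideals. The plan is to use this to reduce everything to the regular module $R$ via Theorem~\ref{thm:graph-rigidity}, and then to compute directly with the elements $d_i$ from the proof of Theorem~\ref{thm:reduced-finite-min}.

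\emph{Step 1: annihilator ideals.} For any ideal $I$ of $R$ one has $\ann_R(I)=\bigcap_{i:\,I\not\subseteq\mathfrak p_i}\mathfrak p_i$. Indeed, if $I\not\subseteq\mathfrak p_i$, then $aI=0$ forces $a\in\mathfrak p_i$. Conversely, an element lying in every $\mathfrak p_i$ with $I\not\subseteq\mathfrak p_i$ multiplies $I$ into the intersection of all minimal primes, which is the nilradical and hence $0$.

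So every annihilator ideal is of the form $P_S=\bigcap_{i\in S}\mathfrak p_i$, and there are finitely many of them, which gives the ascending chain condition. Theorem~\ref{thm:graph-rigidity} then identifies $\GammaSD(E)$ with $\GammaSD(R)$, whose vertices are the proper ideals $\ann_R(I)$ for finitely generated $I\ne0$.

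Each $P_S$ with $S\neq\varnothing$ is realized: $\ann_R(d_S)=P_S$, where $d_S=\sum_{i\in S}d_i$. This is the computation at the end of the proof of Theorem~\ref{thm:reduced-finite-min}, using $d_S\notin\mathfrak p_i$ exactly for $i\in S$.

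The $P_S$ are pairwise distinct. By prime avoidance in reverse, $\bigcap_{i\in S}\mathfrak p_i\subseteq\mathfrak p_j$ forces some $\mathfrak p_i\subseteq\mathfrak p_j$, hence $i=j$ by minimality. So $P_S\subseteq P_T$ iff $T\subseteq S$. Also $P_S\neq R$ for $S\ne\varnothing$. This yields exactly $2^s-1$ vertices.

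\emph{Step 2: adjacency.} Vertices $S$ and $T$ are adjacent iff $P_S+P_T=R$. Since $V(P_S)=\bigcup_{i\in S}V(\mathfrak p_i)$, we get $V(P_S+P_T)=\bigcup_{i\in S,j\in T}V(\mathfrak p_i+\mathfrak p_j)$. This is empty iff every $\mathfrak p_i+\mathfrak p_j=R$.

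For distinct $S,T$, adjacency forces $S\cap T=\varnothing$, since $\mathfrak p_i+\mathfrak p_i\ne R$. This is consistent with the condition. The neighbours of $S$ are then exactly the nonempty subsets of $C(S)$. Note $C(S)\cap S=\varnothing$, so no such subset equals $S$. Hence $\deg(S)=2^{|C(S)|}-1$.

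\emph{Step 3: clique and chromatic numbers.}

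For the clique number: singletons span a copy of $H$ inside $\GammaSD(E)$, so $\omega(H)\le\omega(\GammaSD(E))$. Conversely, a clique $S_1,\dots,S_r$ consists of pairwise disjoint sets. Choosing $i_k\in S_k$ gives $r$ distinct indices that are pairwise comaximal, i.e.\ a clique in $H$.

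For the chromatic number: the inequality $\chi(H)\le\chi(\GammaSD(E))$ holds by the same embedding. For the reverse, given a proper coloring $c$ of $H$, color each $S$ by $c(\min S)$. If $S$ and $T$ are adjacent, then $\min S$ and $\min T$ are adjacent in $H$, so their colors differ.

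The only genuinely delicate point is Step 1: showing that the vertex sets match exactly, with the $P_S$ distinct and no other annihilators occurring. That is where reducedness, the use of the nilradical, and the minimality of the primes are essential.
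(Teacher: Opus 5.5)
Your proof is correct and follows essentially the same route as the paper. Both arguments compute $\ann_R I=\bigcap_{I\not\subseteq\mathfrak p_i}\mathfrak p_i$, use finiteness of the annihilator ideals to invoke Theorem~\ref{thm:graph-rigidity}, realize each $P_S$ by $\sum_{i\in S}d_i$, read off adjacency from $V(\cdot)$, and get $\omega$ and $\chi$ from the induced copy of $H$ together with a choice map $S\mapsto i(S)$. The only cosmetic difference is how you show the $P_S$ are distinct: you argue that a prime containing a finite intersection of primes contains one of them, whereas the paper observes that $d_i\notin P_S$ exactly when $i\in S$.
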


\begin{proof}
Choose $d_i$ as in Theorem~\ref{thm:reduced-finite-min}.
For every ideal $I$ of $R$, reducedness gives
\[
 \ann_R I=\bigcap_{\{i:I\not\subseteq\mathfrak p_i\}}\mathfrak p_i.
\]
Indeed, a product $aI$ is zero exactly when it is zero modulo every
minimal prime, and $R/\mathfrak p_i$ is a domain.
There are therefore finitely many annihilator ideals, so
Theorems~\ref{thm:finite-detection-acc} and \ref{thm:graph-rigidity}
apply. Every subset $S$ is realized by the principal ideal generated
by $\sum_{i\in S}d_i$.
Different subsets give different intersections: $d_i$ is outside the
intersection exactly when $i$ belongs to the indexing subset.
The empty subset gives $R$ and corresponds to the zero submodule;
all the other subsets give vertices.

The closed set of $\bigcap_{i\in S}\mathfrak p_i$ is
$\bigcup_{i\in S}V(\mathfrak p_i)$.
Two such sets are disjoint exactly when all cross pairs of minimal
primes are comaximal, proving the adjacency criterion.
The neighbors of $S$ are precisely the nonempty subsets of $C(S)$.
Since $S\cap C(S)=\varnothing$, none of them equals $S$, and the
degree formula follows.

The singleton subsets induce a copy of $H$.
Conversely, choose one index $i(S)\in S$ for every nonempty $S$.
The map $S\mapsto i(S)$ sends adjacent vertices to adjacent vertices
of $H$. Pulling back a coloring of $H$ gives a coloring of
$\GammaSD(E)$, and the map is injective on every clique.
Together with the induced copy of $H$, this proves both equalities.
\end{proof}

\begin{corollary}\label{cor:comaximal-min-graph}
Under the hypotheses of Theorem~\ref{thm:finite-min-graph}, suppose
that the minimal primes are pairwise comaximal. Then adjacency is
set-disjointness, and
\[
 \deg(S)=2^{s-|S|}-1,\qquad
 |E(\GammaSD(E))|=\frac{3^s-2^{s+1}+1}{2},\qquad
 \omega(\GammaSD(E))=\chi(\GammaSD(E))=s.
\]
The vertex corresponding to the full set is isolated.
\end{corollary}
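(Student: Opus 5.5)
This corollary is a specialization of Theorem~\ref{thm:finite-min-graph}, so I will read each claim off that theorem. Pairwise comaximality means $\mathfrak p_i+\mathfrak p_j=R$ for $i\ne j$. A minimal prime is never comaximal with itself, since $\mathfrak p_i+\mathfrak p_i=\mathfrak p_i\ne R$. Hence the adjacency condition of Theorem~\ref{thm:finite-min-graph}, that every cross pair $i\in S$, $j\in T$ be comaximal, holds exactly when $S\cap T=\varnothing$. By the same reasoning $C(S)=\{1,\ldots,s\}\setminus S$. The degree formula of the theorem then gives $\deg(S)=2^{s-|S|}-1$. For $S=\{1,\ldots,s\}$ this is $2^0-1=0$, so the full-set vertex is isolated.

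For the edge count, I will count ordered pairs $(S,T)$ of disjoint nonempty subsets and then halve. Disjointness already forces $S\ne T$, since both are nonempty. Assigning each index to $S$, to $T$, or to neither gives $3^s$ ordered pairs of disjoint subsets. Removing the pairs with $S=\varnothing$ ($2^s$ of them) and those with $T=\varnothing$ (another $2^s$), then adding back the single pair with both empty, leaves $3^s-2^{s+1}+1$. Halving gives the stated $|E(\GammaSD(E))|$. As a cross-check, I could instead sum $\tfrac12\sum_S(2^{s-|S|}-1)$ using the binomial theorem and confirm the same value.

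For $\omega$ and $\chi$, the graph $H$ of Theorem~\ref{thm:finite-min-graph} is now the complete graph on $s$ vertices. Therefore $\omega(H)=\chi(H)=s$, and the theorem transfers both values to $\GammaSD(E)$. The only step that needs care is the exclusion $i=j$ in the adjacency translation: it is what makes adjacency coincide with disjointness rather than holding for all pairs. Everything else is bookkeeping.
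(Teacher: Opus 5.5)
Your proposal is correct and follows essentially the same route as the paper: identify $C(S)$ with the complement of $S$ and $H$ with the complete graph on $s$ vertices, then count ordered pairs of nonempty disjoint subsets via the $3^s$ assignment and inclusion--exclusion before halving. Your explicit remark that $\mathfrak p_i+\mathfrak p_i=\mathfrak p_i\ne R$ supplies the step the paper leaves implicit in translating adjacency into set-disjointness.
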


\begin{proof}
Here $C(S)$ is the complement of $S$ and $H$ is complete.
For the edge count, assign each index to $S$, to $T$, or to neither;
there are $3^s$ ordered disjoint pairs, allowing empty sets.
Deleting pairs with $S=\varnothing$ or $T=\varnothing$ leaves
$3^s-2^{s+1}+1$ ordered pairs of nonempty disjoint sets.
Each edge is counted twice.
\end{proof}

\subsection{Hereditary torsion theories}

A torsion theory $\tau=(\mathcal T,\mathcal F)$ on
$A\operatorname{-Mod}$ consists of classes with
$\Hom_A(T,F)=0$ for $T\in\mathcal T$ and $F\in\mathcal F$, such that every
module $M$ has an exact sequence $0\to T\to M\to F\to0$ with
$T\in\mathcal T$ and $F\in\mathcal F$.
It is \emph{hereditary} if $\mathcal T$ is closed under submodules.
Modules in $\mathcal T$ are called $\tau$-torsion. Fix such a theory and write
\[
\mathfrak F_\tau=\{J\subseteq A:A/J\in\mathcal T\}
\]
for its Gabriel filter, a collection of ideals closed under upward inclusion
and finite intersections. These closure properties also follow directly:
$A/K$ is a quotient of $A/J$ when $J\subseteq K$, and
$A/(J_1\cap J_2)$ embeds in $A/J_1\oplus A/J_2$.
The $\tau$-torsion submodule of an $A$-module $M$ is
\[
t_\tau(M)=\{x\in M:\ann_A(x)\in\mathfrak F_\tau\}.
\]

\begin{theorem}
\label{thm:torsion-test}
Let $E$ be an annihilator multiplication $A$-module, and let $L$ be a finitely
generated submodule of $E$. Choose a finitely generated ideal $I$ of $A$ such
that $\ann_A(L)=\ann_A(IE).$
Then the following conditions are equivalent:
\begin{enumerate}[label=\textup{(\roman*)}]
  \item $L$ is $\tau$-torsion;
  \item $\ann_A(L)\in\mathfrak F_\tau$;
  \item $\ann_A(IE)\in\mathfrak F_\tau$.
\end{enumerate}
If, in addition, some finitely generated $F\subseteq E$ satisfies
$\ann_A(F)=\ann_A(E)$, then these conditions are also equivalent to
$IE\in\mathcal T$.
\end{theorem}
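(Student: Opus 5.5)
The plan has three parts: show that (i) and (ii) are equivalent for any finitely generated module, note that (ii) and (iii) are equivalent because the ideals coincide, and treat the added equivalence with $IE\in\mathcal T$ separately under the finite-detection hypothesis.

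\textbf{(i) $\Leftrightarrow$ (ii).} Write $L=Ax_1+\cdots+Ax_r$. Then $\ann_A(L)=\bigcap_i\ann_A(x_i)$.

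If $L$ is $\tau$-torsion, heredity makes each cyclic submodule $Ax_i\cong A/\ann_A(x_i)$ torsion. So each $\ann_A(x_i)$ lies in $\mathfrak F_\tau$, and closure under finite intersections gives $\ann_A(L)\in\mathfrak F_\tau$.

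Conversely, suppose $\ann_A(L)\in\mathfrak F_\tau$. Upward closure puts every $\ann_A(x)$, for $x\in L$, in $\mathfrak F_\tau$, since $\ann_A(L)\subseteq\ann_A(x)$. Hence $L\subseteq t_\tau(L)$. Alternatively, $L$ embeds in $(A/\ann_A(L))^r$ via $a\mapsto$ coordinates, and a hereditary torsion class is closed under finite direct sums and submodules. Either way $L$ is torsion.

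\textbf{(ii) $\Leftrightarrow$ (iii).} This is immediate, because $\ann_A(L)=\ann_A(IE)$ by the choice of $I$.

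\textbf{The additional equivalence with $IE\in\mathcal T$.} Assume $F\subseteq E$ is finitely generated with $\ann_A(F)=\ann_A(E)$. The key point is that $IF$ is a finitely generated submodule of $IE$, and Lemma~\ref{lem:finite-detection-calculus} gives $\ann_A(IF)=\ann_A(IE)$.

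If $IE\in\mathcal T$, then its submodule $IF$ is torsion by heredity. The finitely generated case above then gives $\ann_A(IE)=\ann_A(IF)\in\mathfrak F_\tau$, which is (iii).

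Conversely, assume (iii). Every $y\in IE$ satisfies $\ann_A(IE)\subseteq\ann_A(y)$, so $\ann_A(y)\in\mathfrak F_\tau$ by upward closure. Thus $IE=t_\tau(IE)$, which is torsion.

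\textbf{Main obstacle.} The main step needing care is justifying that a module all of whose elements have annihilators in $\mathfrak F_\tau$ is $\tau$-torsion. I would argue as follows. Each cyclic submodule $Ay\cong A/\ann_A(y)$ is torsion. A torsion class is closed under quotients and direct sums, so the image of $\bigoplus_y Ay\to M$, which is all of $M$, is torsion.

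Without $F$, the direction (iii) $\Rightarrow$ $IE\in\mathcal T$ still follows by the same argument. The direction from $IE\in\mathcal T$ back to (iii) is the one that needs finite detection, because $\ann_A(IE)$ might not be realized by any finitely generated submodule of $IE$.
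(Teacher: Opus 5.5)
Your proposal is correct and follows the paper's proof essentially step for step: generators plus filter closure for (i)$\Leftrightarrow$(ii), the defining equality for (ii)$\Leftrightarrow$(iii), and the finitely generated submodule $IF$ with $\ann_A(IF)=\ann_A(IE)$ for the final assertion, where you also usefully spell out why a module whose element annihilators all lie in $\mathfrak F_\tau$ is $\tau$-torsion. One slip is in your ``alternatively'' remark: $L$ need not embed in $(A/\ann_A(L))^r$ (take $L=\mathbb Z/2\mathbb Z\oplus\mathbb Z$ over $\mathbb Z$), but it is a quotient of that module via its generators, and this gives the same conclusion because torsion classes are closed under finite direct sums and quotients.
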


\begin{proof}
Write $L=Ax_1+\cdots+Ax_n$. Then
\[
\ann_A(L)=\bigcap_{i=1}^n\ann_A(x_i).
\]
The module $L$ is $\tau$-torsion if and only if each generator $x_i$ is
$\tau$-torsion, equivalently if $\ann_A(x_i)\in\mathfrak F_\tau$ for all $i$.
Since the Gabriel filter is closed under finite intersections and upward
inclusion, this is equivalent to
\[
\ann_A(L)=\bigcap_{i=1}^n\ann_A(x_i)\in\mathfrak F_\tau.
\]
Thus (i) and (ii) are equivalent. The equality
$\ann_A(L)=\ann_A(IE)$ gives the equivalence of (ii) and (iii).

For the final assertion, $IF$ is finitely generated and
$\ann_A(IF)=\ann_A(IE)$ by Lemma~\ref{lem:finite-detection-calculus}.
If $IE$ is $\tau$-torsion, its submodule $IF$ is $\tau$-torsion,
so $\ann_A(IE)=\ann_A(IF)\in\mathfrak F_\tau$.
Conversely, if $\ann_A(IE)$ belongs to the filter, then so does the
annihilator of every element of $IE$, by upward closure.
Hence $IE$ is $\tau$-torsion.
\end{proof}

\begin{corollary}
\label{cor:elementwise-torsion}
Let $E$ be an annihilator multiplication $A$-module. For $e\in E$, choose a
finitely generated ideal $I$ of $A$ such that $\ann_A(e)=\ann_A(IE).$
Then
\[
e\in t_\tau(E) \quad\iff\quad \ann_A(IE)\in\mathfrak F_\tau.
\]
If some finitely generated $F\subseteq E$ has
$\ann_A(F)=\ann_A(E)$, then
\[
e\in t_\tau(E) \quad\iff\quad IE\in\mathcal T.
\]
\end{corollary}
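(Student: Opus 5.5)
This corollary is the special case $L=Ae$ of Theorem~\ref{thm:torsion-test}, so the plan is to apply that theorem to the cyclic submodule generated by $e$. First, $e\in t_\tau(E)$ means, by definition of the torsion submodule, that $\ann_A(e)\in\mathfrak F_\tau$. Since $\ann_A(Ae)=\ann_A(e)$, this is condition (ii) of Theorem~\ref{thm:torsion-test} for $L=Ae$. The same theorem says (ii) is equivalent to (iii), namely $\ann_A(IE)\in\mathfrak F_\tau$. Here $I$ is any finitely generated ideal with $\ann_A(Ae)=\ann_A(IE)$, and the chosen $I$ qualifies because $\ann_A(e)=\ann_A(IE)$ by hypothesis. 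This gives the first equivalence. One could equally bypass the theorem: just substitute $\ann_A(e)=\ann_A(IE)$ into the definition of $t_\tau(E)$.

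For the second equivalence, assume some finitely generated $F\subseteq E$ satisfies $\ann_A(F)=\ann_A(E)$. The final assertion of Theorem~\ref{thm:torsion-test}, applied again to $L=Ae$ with the same $I$, says the equivalent conditions (i)--(iii) are also equivalent to $IE\in\mathcal T$. Chaining this with the first part gives $e\in t_\tau(E)$ if and only if $IE\in\mathcal T$.

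No step is a genuine obstacle; the only point to check is that the equivalence in Theorem~\ref{thm:torsion-test} holds for the particular ideal $I$ chosen in the corollary, not merely for some ideal. It does, because the theorem is stated for an arbitrary choice of $I$ satisfying the annihilator equality. The case $e=0$ needs no separate treatment: then $\ann_A(e)=A$, which lies in $\mathfrak F_\tau$ since the zero module is torsion, and the argument covers it uniformly.
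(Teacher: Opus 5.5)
Your proposal is correct and matches the paper's proof, which likewise just applies Theorem~\ref{thm:torsion-test} to the cyclic submodule $Ae$. You also point out, correctly, that the theorem holds for any choice of $I$ satisfying the annihilator equality, and that the first equivalence follows directly from the definition of $t_\tau(E)$.
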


\begin{proof}
Apply Theorem~\ref{thm:torsion-test} to the cyclic submodule $Ae$.
\end{proof}

Specializing to the hereditary torsion theory determined by a multiplicatively
closed subset gives the usual localization criterion.

\begin{corollary}
\label{cor:s-torsion}
Let $S\subseteq A$ be a multiplicatively closed subset, and let
\[
T_S(E)=\{e\in E:se=0\text{ for some }s\in S\}
\]
be the usual $S$-torsion submodule of $E$. Let $L$ be a finitely generated
submodule of an annihilator multiplication $A$-module $E$, and choose a finitely
generated ideal $I$ of $A$ such that $\ann_A(L)=\ann_A(IE)$. Then the following
conditions are equivalent:
\begin{enumerate}[label=\textup{(\roman*)}]
  \item $L$ is $S$-torsion;
  \item $S^{-1}L=0$;
  \item $\ann_A(L)\cap S\neq\varnothing$;
  \item $\ann_A(IE)\cap S\neq\varnothing$.
\end{enumerate}
If, in addition, some finitely generated $F\subseteq E$ satisfies
$\ann_A(F)=\ann_A(E)$, these conditions are also equivalent to
$S^{-1}(IE)=0$.
\end{corollary}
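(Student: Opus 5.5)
This corollary is the special case of Theorem~\ref{thm:torsion-test} for the hereditary torsion theory $\tau_S$ whose torsion class consists of the $S$-torsion modules. Its Gabriel filter is
\[
\mathfrak F_S=\{J\subseteq A: J\cap S\ne\varnothing\}.
\]
The first step is to confirm this description. The $S$-torsion modules form a class closed under submodules, quotients, extensions and direct sums, so $\tau_S$ is a hereditary torsion theory. Moreover, $A/J$ is $S$-torsion exactly when some $s\in S$ kills $1+J$, that is, exactly when $J\cap S\ne\varnothing$. It follows that $t_{\tau_S}(M)=T_S(M)$.

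With this in hand, the equivalences follow directly from Theorem~\ref{thm:torsion-test}:
\begin{enumerate}[label=\textup{(\roman*)}]
\item Condition (i) is ``$L$ is $\tau_S$-torsion''.
\item Condition (iii) is ``$\ann_A(L)\in\mathfrak F_S$''.
\item Condition (iv) is ``$\ann_A(IE)\in\mathfrak F_S$''.
\end{enumerate}
The theorem gives (i)$\iff$(iii)$\iff$(iv).

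For condition (ii), $S^{-1}L=0$ holds exactly when every element of $L$ is killed by some element of $S$, which is condition (i). One could also argue directly: for $L=Ax_1+\cdots+Ax_n$ with $s_ix_i=0$, the product $s_1\cdots s_n$ lies in $\ann_A(L)\cap S$.

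For the final assertion, assume a finitely generated $F\subseteq E$ with $\ann_A(F)=\ann_A(E)$ exists. Theorem~\ref{thm:torsion-test} then gives (iv)$\iff IE\in\mathcal T_S$. A module is $S$-torsion exactly when its localization at $S$ vanishes, since both conditions say that each element is killed by some element of $S$. Hence $IE\in\mathcal T_S$ is equivalent to $S^{-1}(IE)=0$.

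The only step that needs any care is identifying $\mathfrak F_S$, in particular that $\mathfrak F_S$ is closed under finite intersections. This holds because $s_1s_2\in J_1\cap J_2$ whenever $s_1\in J_1\cap S$ and $s_2\in J_2\cap S$. The rest is direct substitution into Theorem~\ref{thm:torsion-test}.
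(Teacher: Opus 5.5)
Your proposal is correct and follows essentially the same route as the paper: identify the Gabriel filter $\mathfrak F_S=\{J\subseteq A: J\cap S\neq\varnothing\}$, then read off the equivalences of (i)--(iv) from Theorem~\ref{thm:torsion-test}, using that a module is $S$-torsion exactly when its localization at $S$ vanishes. The only difference is in the final assertion: you cite the last clause of Theorem~\ref{thm:torsion-test}, whereas the paper repeats that argument directly by passing to the finitely generated submodule $IF$, which satisfies $\ann_A(IF)=\ann_A(IE)$.
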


\begin{proof}
The hereditary torsion theory associated to $S$ has Gabriel filter
\[
    \mathfrak F_S=\{J\subseteq A:J\cap S\neq\varnothing\}.
\]
For finitely generated $L$, one has
\[
L\text{ is }S\text{-torsion} \quad\iff\quad S^{-1}L=0 \quad\iff\quad \ann_A(L)\cap S\neq\varnothing.
\]
Using $\ann_A(L)=\ann_A(IE)$ gives the equivalence with (iv).
Under the additional hypothesis, $IF$ is finitely generated and has
the same annihilator as $IE$. If $S^{-1}(IE)=0$, then
$S^{-1}(IF)=0$, so this common annihilator meets $S$.
Conversely, an element in $\ann_A(IE)\cap S$ kills $IE$ and becomes
a unit upon localization. This proves the final assertion.
\end{proof}

The torsion finitely generated submodules are therefore exactly the annihilator
classes lying in the Gabriel filter.

\begin{proposition}
\label{prop:torsion-annihilator-classes}
Let $E$ be an annihilator multiplication $A$-module. For a hereditary
torsion theory $\tau$, set
\[
\mathcal A_{E,\tau}^{\operatorname{fg}}
=\{\ann_A(L):L\subseteq E\text{ is finitely generated and }L\in\mathcal T\}.
\]
Then
$\mathcal A_{E,\tau}^{\operatorname{fg}}=\mathcal A_E^{\operatorname{fg}}\cap\mathfrak F_\tau.$
Moreover, $\mathcal A_{E,\tau}^{\operatorname{fg}}$ is closed under finite
intersections and is upward closed inside $\mathcal A_E^{\operatorname{fg}}$.
\end{proposition}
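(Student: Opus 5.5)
The equality $\mathcal A_{E,\tau}^{\operatorname{fg}}=\mathcal A_E^{\operatorname{fg}}\cap\mathfrak F_\tau$ follows directly from the equivalence of (i) and (ii) in Theorem~\ref{thm:torsion-test}. That equivalence does not depend on the chosen ideal $I$, and for a finitely generated $L$ it uses only that $\mathfrak F_\tau$ is closed under finite intersections and upward inclusion. For the inclusion ``$\subseteq$'', take $\ann_A(L)$ with $L$ finitely generated and $\tau$-torsion. It lies in $\mathcal A_E^{\operatorname{fg}}$ by definition, and in $\mathfrak F_\tau$ by Theorem~\ref{thm:torsion-test}. For ``$\supseteq$'', an element of the intersection has the form $\ann_A(L)$ with $L$ finitely generated and $\ann_A(L)\in\mathfrak F_\tau$, so the same theorem shows that $L\in\mathcal T$. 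The zero submodule causes no trouble: $\ann_A(0)=A\in\mathfrak F_\tau$, since $A/A=0$ is torsion, and $0\in\mathcal T$.

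Closure under finite intersections follows by taking sums of submodules. Given $\ann_A(L_1)$ and $\ann_A(L_2)$ in the collection, the module $L_1+L_2$ is finitely generated and satisfies $\ann_A(L_1+L_2)=\ann_A(L_1)\cap\ann_A(L_2)$. It is $\tau$-torsion because it is a quotient of $L_1\oplus L_2$, and a torsion class is closed under direct sums and quotients. Alternatively, one can note that this intersection lies in both $\mathcal A_E^{\operatorname{fg}}$ and $\mathfrak F_\tau$, since the filter is closed under finite intersections, and then apply the equality just proved. The empty intersection is $A$, realized by $L=0$.

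For upward closure inside $\mathcal A_E^{\operatorname{fg}}$, suppose $\ann_A(L)\in\mathcal A_{E,\tau}^{\operatorname{fg}}$, and let $\ann_A(K)\in\mathcal A_E^{\operatorname{fg}}$ with $\ann_A(L)\subseteq\ann_A(K)$. Upward closure of $\mathfrak F_\tau$ gives $\ann_A(K)\in\mathfrak F_\tau$, and the equality then places $\ann_A(K)$ in $\mathcal A_{E,\tau}^{\operatorname{fg}}$.

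No step is a real obstacle. The only point needing care is to rely on the (i)$\iff$(ii) part of Theorem~\ref{thm:torsion-test} and not on its final assertion, because the latter requires finite annihilator detection, which is not assumed here. The annihilator multiplication hypothesis is in fact not needed for this proposition.
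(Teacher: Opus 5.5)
Your proposal is correct and follows the paper's proof. The equality comes from the (i)$\iff$(ii) part of Theorem~\ref{thm:torsion-test}, upward closure comes from that of $\mathfrak F_\tau$, and intersection closure comes from $\ann_A(L_1+L_2)=\ann_A(L_1)\cap\ann_A(L_2)$; your ``alternative'' argument there is exactly the paper's, and your remark that the annihilator multiplication hypothesis is never actually used is accurate.
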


\begin{proof}
By Theorem~\ref{thm:torsion-test}, a finitely generated submodule $L$ is
$\tau$-torsion if and only if $\ann_A(L)\in\mathfrak F_\tau$. This proves the
displayed equality.

Since $\mathfrak F_\tau$ is upward closed, the same is true for
$\mathcal A_{E,\tau}^{\operatorname{fg}}$ inside
$\mathcal A_E^{\operatorname{fg}}$. If
$J_1=\ann_A(L_1)$ and $J_2=\ann_A(L_2)$ belong to
$\mathcal A_{E,\tau}^{\operatorname{fg}}$, then
\[
J_1\cap J_2=\ann_A(L_1)\cap\ann_A(L_2)=\ann_A(L_1+L_2).
\]
The submodule $L_1+L_2$ is finitely generated, and
$J_1\cap J_2\in\mathfrak F_\tau$ because Gabriel filters are closed under finite
intersections. Therefore
$J_1\cap J_2\in\mathcal A_{E,\tau}^{\operatorname{fg}}$.
\end{proof}

\begin{remark}
By Theorem~\ref{thm:finite-detection-acc}, the additional hypotheses
in these torsion statements hold for every annihilator multiplication
module whose faithful quotient satisfies the ascending chain condition
on annihilator ideals, in particular when that quotient is Noetherian.
\end{remark}

\section{Amalgamations and square-zero constructions}
\label{sec:amalgamation}

We first treat annihilator multiplication modules over amalgamated rings,
and then give structural square-zero results and explicit nonprojective
constructions. Let $f:A\to B$ be a ring homomorphism, let $J$ be an ideal of $B$,
let $M$ be an $A$-module, and let $N$ be a $B$-module.
Regard $N$ as an $A$-module through $f$, and let
$\varphi:M\to N$ be an $A$-module homomorphism.
Following El Khalfaoui, Mahdou, Sahandi, and Shirmohammadi
\cite{ElKhalfaouiEtAl2021}, the \emph{amalgamation of $M$ and $N$
along $J$ with respect to $\varphi$} is
\[
M\bowtie^\varphi JN
=
\{(m,\varphi(m)+n):m\in M,\ n\in JN\},
\]
viewed as an $A\bowtie^fJ$-module under coordinatewise addition
and scalar multiplication.
The coordinatewise action makes it an $R$-module: if $c=f(a)+j$, then
$c(\varphi(m)+n)-\varphi(am)=j\varphi(m)+cn\in JN$.
Put
\[
C=f(A)+J, \qquad X=\varphi(M)+JN.
\]
Then $C$ is a subring of $B$ and $X$ is a $C$-module, since
$(f(a)+j)\varphi(m)=\varphi(am)+j\varphi(m)\in X$ and $C(JN)\subseteq JN$.
There are natural
surjective homomorphisms
\[
\rho_A:R\longrightarrow A, \qquad \rho_A(a,f(a)+j)=a,
\]
and
\[
\rho_C:R\longrightarrow C, \qquad \rho_C(a,f(a)+j)=f(a)+j.
\]

\subsection{Coordinate annihilators}

The following computation expresses annihilators in the amalgamated module as
intersections of annihilators coming from the two coordinates.

\begin{lemma}
\label{lem:amalgamated-annihilator}
Let
\[
x=(m,\varphi(m)+n)\in M\bowtie^\varphi JN, \qquad n\in JN.
\]
Then
\[
\ann_R(x) =\rho_A^{-1}\bigl(\ann_A(m)\bigr) \cap \rho_C^{-1}\bigl(\ann_C(\varphi(m)+n)\bigr).
\]
Moreover, if $H$ is an ideal of $R$, then
\[
\ann_R(HE) =\rho_A^{-1}\bigl(\ann_A(\rho_A(H)M)\bigr) \cap \rho_C^{-1}\bigl(\ann_C(\rho_C(H)X)\bigr).
\]
\end{lemma}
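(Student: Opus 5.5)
The proof is a direct computation. Here $R=A\bowtie^fJ$ and $E=M\bowtie^\varphi JN$, which the section uses implicitly. An element of $R$ is $r=(a,f(a)+j)$ with $\rho_A(r)=a$ and $\rho_C(r)=f(a)+j$, and the action on $E$ is coordinatewise. The plan is to read off both annihilator formulas one coordinate at a time.

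For the first formula, take $x=(m,\varphi(m)+n)$. We have
\[
rx=\bigl(am,\,(f(a)+j)(\varphi(m)+n)\bigr),
\]
so $rx=0$ exactly when $am=0$ and $(f(a)+j)(\varphi(m)+n)=0$. The first condition says $\rho_A(r)\in\ann_A(m)$. For the second, note that $\varphi(m)+n$ lies in $X$ and $f(a)+j$ lies in $C$. Its annihilator computed in $C$ is therefore $\ann_C(\varphi(m)+n)$, and the second condition says $\rho_C(r)\in\ann_C(\varphi(m)+n)$. Intersecting the two preimages gives the formula.

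For the second formula, I would first compute the two coordinate projections of $HE$. Let $\pi_1\colon E\to M$ and $\pi_2\colon E\to X$ be the coordinate maps; both are well defined and surjective because every $m$ occurs with $n=0$. Since $r\cdot y=(\rho_A(r)\pi_1(y),\rho_C(r)\pi_2(y))$, the projections of $HE$ are
\[
\pi_1(HE)=\rho_A(H)\,\pi_1(E)=\rho_A(H)M,\qquad \pi_2(HE)=\rho_C(H)\,\pi_2(E)=\rho_C(H)X.
\]
This uses that sums of products $h\,y$ project to sums of products $\rho(h)\pi(y)$, and that $\pi_1,\pi_2$ are surjective.

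An element $s\in R$ kills a subset $Z\subseteq E$ iff $\rho_A(s)$ kills $\pi_1(Z)$ and $\rho_C(s)$ kills $\pi_2(Z)$. This holds because $sz=0$ is checked coordinatewise, element by element. Applying it with $Z=HE$ and using the projections above gives the stated intersection of preimages.

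The only step needing care is the identification of the projections. The product $HE$ consists of finite sums $\sum h_iy_i$, and each $h_iy_i$ couples the two coordinates through the same $h_i$. For annihilators, though, only the projection sets matter, not the coupled pairs. So it suffices to note that $\rho_A(H)\pi_1(E)$ is generated by the elements $\rho_A(h)\pi_1(y)$, each of which is $\pi_1(hy)$, and likewise for the second coordinate.
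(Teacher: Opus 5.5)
Your proposal is correct and follows essentially the same route as the paper: the first formula comes from a coordinatewise check of $rx=0$, and the second from identifying the two coordinate projections of $HE$ as $\rho_A(H)M$ and $\rho_C(H)X$. One small correction: surjectivity of $\pi_2$ onto $X$ holds because $(m,\varphi(m)+n)\in E$ for every $m\in M$ and $n\in JN$, which is the definition $X=\varphi(M)+JN$, not because $m$ occurs with $n=0$; the argument is unaffected.
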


\begin{proof}
Let $r=(a,f(a)+j)\in R$. Then $rx=0$ if and only if both coordinates vanish,
that is,
\[
am=0 \qquad\text{and}\qquad (f(a)+j)(\varphi(m)+n)=0.
\]
This is equivalent to
\[
a\in\ann_A(m) \qquad\text{and}\qquad f(a)+j\in\ann_C(\varphi(m)+n),
\]
which proves the formula for $\ann_R(x)$.

For the second formula, the first projection of $HE$ is $\rho_A(H)M$, and the
second projection is $\rho_C(H)X$. Hence $r\in R$ annihilates $HE$ if and only
if $\rho_A(r)$ annihilates $\rho_A(H)M$ and $\rho_C(r)$ annihilates
$\rho_C(H)X$. This gives the displayed equality.
\end{proof}

\begin{theorem}
\label{thm:amalgamated-characterization}
The amalgamated module $E=M\bowtie^\varphi JN$ is an annihilator multiplication
$R$-module if and only if, for every $m\in M$ and every $n\in JN$, there exists
a finitely generated ideal $H$ of $R$ such that
\[
\begin{aligned}
&\rho_A^{-1}\bigl(\ann_A(m)\bigr) \cap \rho_C^{-1}\bigl(\ann_C(\varphi(m)+n)\bigr) \\
&\qquad =\rho_A^{-1}\bigl(\ann_A(\rho_A(H)M)\bigr) \cap \rho_C^{-1}\bigl(\ann_C(\rho_C(H)X)\bigr).
\end{aligned}
\]
\end{theorem}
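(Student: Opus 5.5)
This is essentially a direct unwinding of the definition using Lemma~\ref{lem:amalgamated-annihilator}. Here $R=A\bowtie^fJ$ and $E=M\bowtie^\varphi JN$, and every element of $E$ has the form $x=(m,\varphi(m)+n)$ with $m\in M$ and $n\in JN$. By definition, $E$ is annihilator multiplication over $R$ exactly when each such $x$ admits a finitely generated ideal $H$ of $R$ with $\ann_R(x)=\ann_R(HE)$. So the plan is to rewrite both sides of this equality through the coordinate formulas and observe that the displayed condition is the same statement.

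The first step is to verify that the parametrization of elements by pairs $(m,n)$ is surjective onto $E$. This is immediate from the definition of the amalgamation. Different pairs may give the same $x$; that is harmless, since the condition is imposed for every pair.

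The second step is to apply the first formula of Lemma~\ref{lem:amalgamated-annihilator} to $x$, which gives the left-hand side of the display. The third step is to apply its second formula to $HE$, which gives the right-hand side. For this second formula, the one point to check is that the projections of $HE$ are exactly $\rho_A(H)M$ and $\rho_C(H)X$. This holds because the projections of $E$ onto its coordinates are $M$ and $X=\varphi(M)+JN$ respectively, and the scalar action is coordinatewise through $\rho_A$ and $\rho_C$. Since the lemma already records this, I would simply cite it.

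With both formulas in hand, the two directions are read off at once. If $E$ is annihilator multiplication, the ideal $H$ supplied for $x$ satisfies the displayed identity. Conversely, an $H$ satisfying the display for the pair $(m,n)$ gives $\ann_R(x)=\ann_R(HE)$. There is no real obstacle. The only care needed is bookkeeping: the intersection on each side is over preimages in $R$, not in $A$ or $C$, so no surjectivity or lifting argument is needed, unlike in Lemma~\ref{lem:quotient-change}.
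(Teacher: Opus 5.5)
Your proposal is correct and follows the paper's proof exactly. Write each element as $x=(m,\varphi(m)+n)$, then use the two formulas of Lemma~\ref{lem:amalgamated-annihilator} to translate $\ann_R(x)=\ann_R(HE)$ into the displayed condition. One small correction: the pair $(m,n)$ is in fact uniquely determined by $x$, since $m$ is the first coordinate and $n$ is then recovered from the second, so your caveat about different pairs giving the same $x$ is unnecessary, though harmless.
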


\begin{proof}
By definition, $E$ is an annihilator multiplication $R$-module if and only if,
for every $x=(m,\varphi(m)+n)\in E$, there exists a finitely generated ideal
$H$ of $R$ such that $\ann_R(x)=\ann_R(HE)$. Lemma~\ref{lem:amalgamated-annihilator}
translates this equality into the displayed condition.
\end{proof}

Equivalently, for each $x=(m,\varphi(m)+n)\in E$, the criterion asks for
finitely many $h_i=(a_i,c_i)\in R$, where $c_i=f(a_i)+j_i$, such that
\[
\ann_R(x)=
\left\{(a,c)\in R:\begin{array}{l}
 aa_iM=0\text{ and }cc_iX=0\\
 \text{for every }i=1,\ldots,t
\end{array}\right\}.
\]
Here $H=(h_1,\ldots,h_t)$. The two projected ideals are obtained from this
single ideal $H$; the criterion does not assert that they may be chosen
independently. It is an exact coordinate reformulation of the defining
condition, rather than a componentwise transfer theorem in full generality.

\subsection{Product and trivial-amalgamation cases}

The characterization recovers the expected componentwise statement in the
product case (\cite[Proposition 2]{KocAMM}).

\begin{corollary}
\label{cor:product-case}
Assume that $J=B$. Then $A\bowtie^fJ=A\times B$ and
$M\bowtie^\varphi JN=M\times N$. In this case, $M\bowtie^\varphi JN$ is an
annihilator multiplication $(A\times B)$-module if and only if $M$ is an
annihilator multiplication $A$-module and $N$ is an annihilator multiplication
$B$-module.
\end{corollary}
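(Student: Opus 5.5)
First identify the objects. When $J=B$ the amalgamated ring is $A\bowtie^fB=\{(a,f(a)+j):a\in A,\ j\in B\}$. Since $f(a)+j$ runs over all of $B$ as $j$ does, this is all of $A\times B$. In the same way $JN=N$, so $M\bowtie^\varphi N=\{(m,\varphi(m)+n)\}$ is all of $M\times N$, with the coordinatewise action. Here $C=f(A)+B=B$, $X=\varphi(M)+N=N$, and $\rho_A,\rho_C$ are the two coordinate projections of $R=A\times B$. The argument then reduces to the observation that ideals of a product are products of ideals. The approach I would take avoids the general criterion of Theorem~\ref{thm:amalgamated-characterization} and uses idempotents directly.

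The key facts, for $x=(m,y)\in M\times N$ and a finitely generated ideal $H=I\times K$ of $R$ (every finitely generated ideal has this form, with $I$ and $K$ finitely generated), are
\[
\ann_R(x)=\ann_A(m)\times\ann_B(y),\qquad \ann_R(HE)=\ann_A(IM)\times\ann_B(KN).
\]
Both are the case $J=B$ of Lemma~\ref{lem:amalgamated-annihilator}, since $\rho_A^{-1}(P)\cap\rho_C^{-1}(Q)=P\times Q$. Conversely, a product $I\times K$ of finitely generated ideals is finitely generated, for example by the elements $(a_i,0)$ and $(0,b_j)$.

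For sufficiency, take $(m,y)$. Choose finitely generated $I$ with $\ann_A(m)=\ann_A(IM)$ and finitely generated $K$ with $\ann_B(y)=\ann_B(KN)$, and set $H=I\times K$. For necessity, given $m\in M$, apply the hypothesis to $x=(m,0)$. This gives a finitely generated $H=I\times K$ with $\ann_A(m)\times B=\ann_A(IM)\times\ann_B(KN)$. Comparing first coordinates gives $\ann_A(m)=\ann_A(IM)$ with $I=\rho_A(H)$ finitely generated. The second factor is handled symmetrically using $(0,y)$.

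No real obstacle is expected. The only point needing care is that equality of product ideals $P\times Q=P'\times Q'$ in $A\times B$ forces $P=P'$ and $Q=Q'$, which holds because both factors can be recovered by projection. The zero-module conventions cause no issue, since the zero element is covered by taking $H=0$.
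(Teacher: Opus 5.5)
Your proposal is correct and follows essentially the same route as the paper. Both identify $R=A\times B$ and $E=M\times N$, use that finitely generated ideals of the product are exactly $I\times K$ with $I,K$ finitely generated and that annihilators split coordinatewise, and then take $H=I\times K$ for sufficiency and test $(m,0)$ and $(0,n)$ for necessity. The only cosmetic difference is that you obtain the coordinate annihilator formulas as the case $J=B$ of Lemma~\ref{lem:amalgamated-annihilator}, whereas the paper states them directly.
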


\begin{proof}
If $J=B$, then every element of $A\times B$ has the form $(a,f(a)+j)$ for some
$a\in A$ and $j\in B$. Thus $A\bowtie^fJ=A\times B$. Since $JN=N$, one also
has $M\bowtie^\varphi JN=M\times N$. For $(m,n)\in M\times N$,
\[
\ann_{A\times B}(m,n)=\ann_A(m)\times\ann_B(n).
\]
Every ideal of $A\times B$ is $I\times K$, by multiplication with the
coordinate idempotents, and it is finitely generated precisely when both
$I$ and $K$ are finitely generated. For such ideals,
\[
\ann_{A\times B}((I\times K)(M\times N))=\ann_A(IM)\times\ann_B(KN).
\]
If $M$ and $N$ have the property, choose representing ideals $I$ and $K$
for $m$ and $n$; their product gives the required representation.
Conversely, apply the property of $M\times N$ to $(m,0)$ and compare first
coordinates, and then to $(0,n)$ and compare second coordinates.
\end{proof}

The opposite extreme is the case where the amalgamated part acts trivially on
the second coordinate.

\begin{corollary}
\label{cor:trivial-amalgamated-part}
Assume that $JN=0$. Then
\[
M\bowtie^\varphi JN=\{(m,\varphi(m)):m\in M\}.
\]
Moreover, $M\bowtie^\varphi JN$ is an annihilator multiplication
$A\bowtie^fJ$-module if and only if $M$ is an annihilator multiplication
$A$-module.
\end{corollary}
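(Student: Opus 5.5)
The first assertion is immediate from the definition. If $JN=0$, then every element of $M\bowtie^\varphi JN$ has the form $(m,\varphi(m)+n)$ with $n\in JN=0$, so the module is the graph $\{(m,\varphi(m)):m\in M\}$.

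For the equivalence, I plan to reduce to Lemma~\ref{lem:quotient-change} through the surjection $\rho_A:R\to A$, where $R=A\bowtie^fJ$. The key step is to show that $\ker\rho_A=\{(0,j):j\in J\}$ annihilates $E=M\bowtie^\varphi JN$. For $(0,j)\in R$ and $(m,\varphi(m))\in E$,
\[
(0,j)(m,\varphi(m))=(0,j\varphi(m)).
\]
The second coordinate lies in $J\varphi(M)$, and we need $J\varphi(M)=0$. This does not follow from $JN=0$ as written unless $J\varphi(M)\subseteq JN$, which does hold because $\varphi(M)\subseteq N$. Hence $j\varphi(m)\in JN=0$. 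So $\ker\rho_A\subseteq\ann_R(E)$, and $E$ is naturally a module over $R/\ker\rho_A\cong A$.

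Next I identify this induced $A$-module with $M$. The map $\psi:M\to E$, $m\mapsto(m,\varphi(m))$, is a bijection onto $E$ and is additive. For $r=(a,f(a)+j)\in R$,
\[
r\,\psi(m)=(am,(f(a)+j)\varphi(m))=(am,\varphi(am)+j\varphi(m))=(am,\varphi(am))=\psi(\rho_A(r)m),
\]
using $A$-linearity of $\varphi$ (the action of $a$ on $N$ is through $f$) and $j\varphi(m)=0$. Thus $\psi$ is an isomorphism of $R$-modules from $M$, viewed as an $R$-module via $\rho_A$, onto $E$.

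Finally, Lemma~\ref{lem:quotient-change} applied to $\pi=\rho_A$ shows that $M$ is annihilator multiplication over $A$ if and only if it is annihilator multiplication over $R$ via $\rho_A$. Since the property is invariant under $R$-module isomorphism, this is equivalent to $E$ being annihilator multiplication over $R$. The argument could alternatively run through Theorem~\ref{thm:amalgamated-characterization}: there $X=\varphi(M)$ and the $C$-coordinate conditions are implied by the $A$-coordinate ones. However, the quotient reduction avoids that bookkeeping.

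The only step needing care is verifying that $J$ kills $\varphi(M)$, i.e. that $J\varphi(M)\subseteq JN$. This holds since $\varphi(M)\subseteq N$. Everything else is routine.
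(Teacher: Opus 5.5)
Your proposal is correct and follows essentially the same approach as the paper. You identify $E$ with $M$ (viewed as an $R$-module through $\rho_A$) via the graph map, which is the inverse of the paper's $\theta$, using $J\varphi(M)\subseteq JN=0$, and then conclude with Lemma~\ref{lem:quotient-change} applied to the surjection $\rho_A$.
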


\begin{proof}
Since $JN=0$, we have $J\varphi(M)\subseteq JN=0$. Therefore
\[
M\bowtie^\varphi JN=\{(m,\varphi(m)):m\in M\}.
\]
The map
\[
\theta:M\bowtie^\varphi JN\longrightarrow M, \qquad \theta(m,\varphi(m))=m,
\]
is an isomorphism of modules when $M$ is viewed as an
$A\bowtie^fJ$-module through the projection
$\rho_A:A\bowtie^fJ\to A$. Indeed,
\[
(a,f(a)+j)(m,\varphi(m))=(am,(f(a)+j)\varphi(m))=(am,\varphi(am)),
\]
because $j\varphi(m)=0$. The kernel of $\rho_A$ annihilates
$M\bowtie^\varphi JN$, so Lemma~\ref{lem:quotient-change} shows that the
annihilator multiplication property over $A\bowtie^fJ$ is equivalent to the same
property over $A$.
\end{proof}

\begin{example}
Let $k$ be a field, let $C=k[\varepsilon]/(\varepsilon^2)$ and write again $\varepsilon$ for the residue class of $\varepsilon$ in $C$. Set $J=(\varepsilon)$. Put $A=k$ and let
$\alpha:A\longrightarrow C$ be the natural inclusion sending $a$ to its residue class as a constant
polynomial. Then
\[
R=A\bowtie^\alpha J=\{(a,a+\lambda\varepsilon):a,\lambda\in k\}.
\]
Thus $R$ is naturally isomorphic to $C$, but it is written as an
amalgamated algebra along the nonzero square-zero ideal $J$.

Let $M=A=k$, let $N=C$, and define
\[
\varphi:M\longrightarrow N,\qquad \varphi(a)=a.
\]
Then $JN=J=(\varepsilon)$, and the corresponding amalgamated module is
\[
E=M\bowtie^\varphi JN = \{(a,a+\lambda\varepsilon):a,\lambda\in k\}.
\]
Hence $E=R$ as an $R$-module. In particular, $E$ is a cyclic multiplication
$R$-module, and hence it is an annihilator multiplication module.

We now verify directly the criterion in
Theorem~\ref{thm:amalgamated-characterization}. Let $m\in M=k$ and let
$n=\lambda\varepsilon\in JN$. Then
\[
    \varphi(m)+n=m+\lambda\varepsilon\in C.
\]
We must find a finitely generated ideal $H$ of $R$ such that
\[
\begin{aligned}
&\rho_A^{-1}\bigl(\ann_A(m)\bigr)
 \cap
 \rho_C^{-1}\bigl(\ann_C(m+\lambda\varepsilon)\bigr) \\
&\qquad =
\rho_A^{-1}\bigl(\ann_A(\rho_A(H)M)\bigr)
\cap
\rho_C^{-1}\bigl(\ann_C(\rho_C(H)N)\bigr).
\end{aligned}
\]
Take the principal ideal $H=R(m,m+\lambda\varepsilon).$
Then $\rho_A(H)$ is the ideal of $A=k$ generated by $m$, and $\rho_C(H)$ is
the ideal of $C$ generated by $m+\lambda\varepsilon$. Therefore
\[
    \rho_A(H)M=mA
    \quad\text{and}\quad
    \rho_C(H)N=C(m+\lambda\varepsilon).
\]
Hence
\[
    \ann_A(\rho_A(H)M)=\ann_A(m)
\]
and
\[
    \ann_C(\rho_C(H)N)
    =
    \ann_C\bigl(C(m+\lambda\varepsilon)\bigr)
    =
    \ann_C(m+\lambda\varepsilon).
\]
Substituting these two equalities gives exactly
\[
\begin{aligned}
&\rho_A^{-1}\bigl(\ann_A(m)\bigr)
 \cap
 \rho_C^{-1}\bigl(\ann_C(m+\lambda\varepsilon)\bigr) \\
&\qquad =
\rho_A^{-1}\bigl(\ann_A(\rho_A(H)M)\bigr)
\cap
\rho_C^{-1}\bigl(\ann_C(\rho_C(H)N)\bigr).
\end{aligned}
\]
Thus the condition in Theorem~\ref{thm:amalgamated-characterization} holds.
Therefore $E=M\bowtie^\varphi JN$ is an annihilator multiplication $R$-module.

For completeness, the annihilators in $C=k[\varepsilon]/(\varepsilon^2)$ are
as follows:
\[
\ann_C(m+\lambda\varepsilon)
=
\begin{cases}
0, & m\ne 0,\\
(\varepsilon), & m=0\text{ and }\lambda\ne 0,\\
C, & m=0\text{ and }\lambda=0.
\end{cases}
\]
Thus the theorem is illustrated by a nonreduced amalgamated algebra with
nonzero square-zero amalgamating ideal. Although $E$ is isomorphic to the free cyclic $R$-module $R$, the example makes
the criterion completely explicit.
\end{example}

\subsection{Square-zero rings and bilinear multiplication}
\label{subsec:squarezero}

The previous coordinate criterion becomes a structural classification
for faithful modules over local rings with square-zero maximal ideal.
For an $R$-module $E$ over a local ring $(R,\mathfrak m,k)$, its
\emph{socle} is $\Soc_R(E)=(0:_E\mathfrak m)$.
A bilinear map $\beta:U\times V\to W$ of vector spaces will be called
\emph{nonsingular} if $\beta(u,v)\ne0$ whenever $u\ne0$ and $v\ne0$.

\begin{theorem}\label{thm:squarezero-bilinear}
Let $(R,\mathfrak m,k)$ be a local ring with
$0\ne\mathfrak m$ and $\mathfrak m^2=0$, and let $E$ be faithful.
Put $W=\Soc_R(E)$ and $T=E/W$.
Then $T$ and $W$ are nonzero $k$-vector spaces, and multiplication
induces a well-defined $k$-bilinear map
\[
 \beta:\mathfrak m\times T\longrightarrow W,
 \qquad (v,x+W)\longmapsto vx.
\]
The following conditions are equivalent:
\begin{enumerate}[label=\textup{(\roman*)}]
\item $E$ is annihilator multiplication;
\item every nonzero element of $E$ has annihilator $0$ or $\mathfrak m$;
\item $\beta$ is nonsingular.
\end{enumerate}
If they hold, $\GammaSD(E)$ consists of exactly two isolated vertices.
\end{theorem}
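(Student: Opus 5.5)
First I will record the basic structure. Since $\mathfrak m^2=0$, we have $\mathfrak mE\subseteq W$, so $\mathfrak m$ kills both $W$ and $T$, and both are $k$-vector spaces. The map $\beta$ is well defined because $vW=0$ for $v\in\mathfrak m$, and it is bilinear over $k=R/\mathfrak m$ because $\mathfrak m\cdot\mathfrak m=0$. To see that $W\ne0$, pick $0\ne v\in\mathfrak m$; faithfulness gives $x$ with $vx\ne0$, and $vx\in W$. The same $x$ has $x\notin W$, so $T\ne0$.

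Next I will pin down the ideal lattice of $R$. Every proper nonzero ideal is a $k$-subspace of $\mathfrak m$. For an ideal $I$, faithfulness and \eqref{eq:colon-annihilator} give $\ann_R(IE)=\ann_R(I)$. If $I=R$ this is $0$; if $0\ne I\subseteq\mathfrak m$ it is $\mathfrak m$, since $\mathfrak m I=0$ and a unit cannot kill $I$; and if $I=0$ it is $R$. Thus $\mathcal B_E^{\mathrm{fg}}=\{0,\mathfrak m,R\}$. Every element of $E$ has annihilator $R$ (only for $x=0$), some ideal inside $\mathfrak m$ (for $x\in W\setminus0$, which gives exactly $\mathfrak m$), or a proper ideal $\ann(x)\subsetneq\mathfrak m$ (for $x\notin W$). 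This proves (i)$\Leftrightarrow$(ii) directly from the definition: $E$ is annihilator multiplication exactly when each element annihilator lies in $\{0,\mathfrak m,R\}$.

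For (ii)$\Leftrightarrow$(iii), take $x\notin W$. Then $\ann(x)\subseteq\mathfrak m$, and $\ann(x)=\{v\in\mathfrak m:\beta(v,\bar x)=0\}$. Since $\ann(x)\ne\mathfrak m$ when $x\notin W$, condition (ii) says that this kernel is $0$ for every nonzero $\bar x\in T$. That is precisely nonsingularity of $\beta$. The only point needing care is that a unit never annihilates a nonzero $x$, so $\ann(x)\subseteq\mathfrak m$ automatically.

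Finally, for the graph statement, the vertices are the annihilators of nonzero finitely generated submodules. Under (ii), each such annihilator is an intersection of copies of $0$ and $\mathfrak m$, hence equals $0$ or $\mathfrak m$. Both values occur: $\mathfrak m$ from a nonzero socle element, and $0$ from any $x\notin W$. Adjacency requires comaximality, but $0+\mathfrak m=\mathfrak m\ne R$. So there are exactly two vertices and no edge. I expect no real obstacle; the main care point is the bookkeeping showing $\mathcal B_E^{\mathrm{fg}}=\{0,\mathfrak m,R\}$ via faithfulness.
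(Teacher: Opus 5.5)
Your proof is correct and follows essentially the same route as the paper. Faithfulness reduces the annihilators $\ann_R(IE)$ to $\{0,\mathfrak m,R\}$, which gives (i)$\Leftrightarrow$(ii); the identification $\ann_R(x)=\ker\beta(-,x+W)$ for $x\notin W$ gives (ii)$\Leftrightarrow$(iii); and the graph assertion follows because $\{0,\mathfrak m\}$ is closed under finite intersections, both values occur, and $0+\mathfrak m\ne R$.
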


\begin{proof}
Since $\mathfrak m^2=0$, one has $\mathfrak m E\subseteq W$, so
$T$ and $W$ are killed by $\mathfrak m$.
Faithfulness and $\mathfrak m\ne0$ give $\mathfrak mE\ne0$,
whence $W\ne0$ and $T\ne0$.
Multiplication is unchanged on replacing $x$ by $x+w$ with $w\in W$,
which proves the assertions about $\beta$.

For every nonzero proper ideal $I\subseteq R$,
$\mathfrak m I=0$ and no unit kills $I$, so $\ann_R I=\mathfrak m$.
Thus annihilators of finitely generated ideals are exactly
$R$, $0$, and $\mathfrak m$; all three occur, since a nonzero
principal ideal in $\mathfrak m$ realizes the last one.
Faithfulness now proves the equivalence of (i) and (ii).
A nonzero element of $W$ has annihilator $\mathfrak m$.
For $x\notin W$, every annihilator of $x$ lies in $\mathfrak m$,
and
$\ann_R(x)=\ker(\beta(-,x+W))$.
This proves the equivalence with (iii).

Both annihilators $0$ and $\mathfrak m$ occur, using respectively an
element outside $W$ and a nonzero element of $W$.
Finite intersections introduce no further annihilators for nonzero
finitely generated submodules, and $0+\mathfrak m\ne R$.
\end{proof}

The next estimate applies the classical projective dimension theorem
to the multiplication map. Its linear-algebra content is the usual
Hopf dimension bound for a nonsingular bilinear map; we give the
argument to make the application explicit. We use only the fact that
projective subvarieties of $\mathbb P^N$ with dimensions summing to at
least $N$ must meet; see \cite[Chapter~I, Section~7]{Hartshorne77}.

\begin{theorem}\label{thm:squarezero-bound}
In Theorem~\ref{thm:squarezero-bilinear}, assume that $k$ is
algebraically closed, that $E$ is annihilator multiplication, and that
\[
 r=\dim_k\mathfrak m,\qquad t=\dim_k(E/\Soc_R(E)),\qquad
 w=\dim_k\Soc_R(E)
\]
are finite. Then $w\ge r+t-1$, and consequently
$\ell_R(E)\ge r+2t-1$.
\end{theorem}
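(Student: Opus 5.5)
By Theorem~\ref{thm:squarezero-bilinear}, the hypothesis that $E$ is annihilator multiplication means that the bilinear map $\beta:\mathfrak m\times T\to W$ is nonsingular, where $T=E/\Soc_R(E)$ and $W=\Soc_R(E)$. The plan is to turn nonsingularity into an intersection statement in a projective space and then apply the dimension theorem cited just before the statement.

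First, fix bases, so that $\mathfrak m\cong k^r$, $T\cong k^t$ and $W\cong k^w$. The bilinear map $\beta$ induces a linear map
\[
 \tilde\beta:\mathfrak m\otimes_k T\longrightarrow W,
\]
defined on a space of dimension $rt$. Its kernel $K$ is a linear subspace of dimension at least $rt-w$. Nonsingularity of $\beta$ says exactly that $K$ contains no nonzero pure tensor $v\otimes x$. This step is immediate.

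Second, translate into projective geometry. Consider the Segre embedding
\[
 \mathbb P^{r-1}\times\mathbb P^{t-1}\hookrightarrow\mathbb P^{rt-1}.
\]
Its image is a closed subvariety of dimension $r+t-2$, and it is exactly the set of classes of nonzero pure tensors. The space $K$, if nonzero, gives a linear subvariety $\mathbb P(K)$ of dimension $\dim K-1$. Nonsingularity says these two subvarieties are disjoint. By the projective dimension theorem (\cite[Chapter~I, Section~7]{Hartshorne77}), disjointness forces
\[
 (r+t-2)+(\dim K-1)<rt-1,
\]
that is, $\dim K\le rt-r-t+1$. If $K=0$, this inequality holds anyway, because $rt-r-t+1=(r-1)(t-1)\ge0$. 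Combining with $\dim K\ge rt-w$ gives
\[
 rt-w\le rt-r-t+1,
\]
hence $w\ge r+t-1$.

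Third, the length bound. Since $\mathfrak m^2=0$, both $T$ and $W$ are killed by $\mathfrak m$, so each is a $k$-vector space and its length equals its $k$-dimension. The exact sequence $0\to W\to E\to T\to 0$ then gives
\[
 \ell_R(E)=w+t\ge r+2t-1.
\]

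Theorem~\ref{thm:squarezero-bilinear} already gives $r,t,w\ge1$, so the projective spaces involved are nonempty. The main point needing care is the application of the dimension theorem. I must check that $\mathbb P(K)$ and the Segre variety are closed subvarieties of the same $\mathbb P^{rt-1}$ whose dimensions are computed correctly. This is where algebraic closedness of $k$ is used: the set-theoretic intersection is nonempty over an algebraically closed field, and disjointness on $k$-points then contradicts it.
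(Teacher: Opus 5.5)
Your proposal is correct and follows the paper's proof essentially step for step. The kernel $K$ of the induced map $\mathfrak m\otimes_k T\to W$ misses the Segre variety, the projective dimension theorem bounds $\dim_k K$, and the sequence $0\to W\to E\to T\to 0$ gives the length bound; the only difference is that you fold the case $K=0$ into the inequality $\dim_k K\le (r-1)(t-1)$ instead of treating it separately.
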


\begin{proof}
The linear map $b:\mathfrak m\otimes_k T\to W$ induced by $\beta$
has no nonzero decomposable tensor in its kernel $K$.
If $K=0$, then $w\ge rt\ge r+t-1$.
Otherwise $\mathbb P(K)$ is disjoint from the Segre variety
\[
 \mathbb P^{r-1}\times\mathbb P^{t-1}
 \subseteq\mathbb P(\mathfrak m\otimes_k T)=\mathbb P^{rt-1},
\]
whose dimension is $r+t-2$.
The projective dimension theorem gives
$(\dim_kK-1)+(r+t-2)<rt-1$.
Thus $\operatorname{rank}(b)=rt-\dim_kK\ge r+t-1$, and $w$ is
at least this rank. Finally the exact sequence $0\to W\to E\to T\to0$
gives $\ell_R(E)=w+t$.
\end{proof}

Polynomial multiplication supplies examples attaining this bound.
The accompanying endomorphism calculation proves indecomposability,
so these are not simply direct sums of free or cyclic constructions.
The polynomial bilinear map itself is classical; the point here is its
annihilator multiplication realization and the sharp module properties.

\begin{theorem}\label{thm:sharp-indecomposable-family}
Let $k$ be any field, let $r,t\ge2$, and put
\[
 V=k[z]_{<r},\qquad T=k[z]_{<t},\qquad W=k[z]_{<r+t-1},
\]
where $k[z]_{<a}$ consists of polynomials of degree less than $a$.
Let $R=k\ltimes V$ be the idealization, with multiplication
$(a,v)(b,v')=(ab,av'+bv)$, and define $E=T\oplus W$ by
\[
 (a,v)(u,w)=(au,aw+vu).
\]
Then $E$ is a faithful, indecomposable, nonprojective annihilator
multiplication $R$-module. More precisely,
\[
 \begin{gathered}
 \Soc_R(E)=\mathfrak mE=0\oplus W,\qquad
 \mathfrak m=0\ltimes V,\\
 \mu_R(E)=t,\qquad \ell_R(E)=r+2t-1,\qquad
 \End_R(E)\cong k\ltimes\Hom_k(T,W).
 \end{gathered}
\]
Here $\mu_R(E)$ is the least number of generators, and the ideal
$\Hom_k(T,W)$ in the displayed endomorphism ring has square zero.
Thus the lower bound in Theorem~\ref{thm:squarezero-bound} is attained
for every pair $r,t\ge2$ over every algebraically closed field.
\end{theorem}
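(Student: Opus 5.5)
Each assertion is a direct verification, and the plan is to take them in order. First I will check that $R=k\ltimes V$ is local with maximal ideal $\mathfrak m=0\ltimes V$, that $\mathfrak m^2=0$ and $\dim_k\mathfrak m=r$, and that the displayed action makes $E$ an $R$-module. The module axiom $(a,v)\bigl((b,v')(u,w)\bigr)=((a,v)(b,v'))(u,w)$ reduces to $a(bw+v'u)+vbu=abw+(av'+bv)u$, which is commutativity of polynomial multiplication. Degrees are fine because $\deg(vu)\le (r-1)+(t-1)<r+t-1$.

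Next come the structural invariants.
\begin{itemize}[label=$\bullet$]
\item The containment $\mathfrak mE=\{(0,vu)\}$ spans $0\oplus W$. Indeed, taking $v=1$ gives all of $0\oplus T$ inside $W$, and the products $z^i u$ reach every degree up to $r+t-2$. So $\mathfrak mE=0\oplus W$.
\item For the socle, $(u,w)$ is killed by $\mathfrak m$ iff $vu=0$ for all $v$, iff $u=0$, since $k[z]$ is a domain. Hence $\Soc_R(E)=0\oplus W=\mathfrak mE$.
\item Consequently $\mu_R(E)=\dim_k E/\mathfrak mE=t$ by Nakayama, and $\ell_R(E)=t+(r+t-1)$.
\item Faithfulness: an element $(a,v)$ killing $E$ must have $a=0$ (act on $(1,0)$) and then $v\cdot1=0$.
\item For annihilator multiplication, I will use Theorem~\ref{thm:squarezero-bilinear}(iii). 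The map $\beta(v,u)=vu$ is nonsingular because $k[z]$ has no zero divisors.
\end{itemize}

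Nonprojectivity needs a short argument. Over a local ring a projective finitely generated module is free, so $E\cong R^t$. But $\ell(R^t)=t(1+r)$, while $\ell_R(E)=r+2t-1$, and the two differ once $r,t\ge2$, since $t(1+r)-(r+2t-1)=(r-1)(t-1)>0$. Alternatively, a free module $R^t$ has socle $\mathfrak m^t$ of dimension $rt\ne r+t-1$.

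The main obstacle is the endomorphism computation, which gives indecomposability as a consequence: the ring $k\ltimes\Hom_k(T,W)$ is local with square-zero radical, so it has no nontrivial idempotents. Let $f\in\End_R(E)$. Since $f$ preserves the socle $0\oplus W$, it induces $\bar f\in\End_k(T)$ and a map $f|_W\in\End_k(W)$. Linearity over $\mathfrak m$ gives
\[
 f|_W(vu)=v\,\bar f(u)\qquad\text{for all }v\in V,\ u\in T.
\]
I will show this forces $\bar f$ to be a scalar $\lambda$ and $f|_W=\lambda$. Setting $v=1$ gives $f|_W(u)=\bar f(u)$ on $T\subseteq W$. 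Setting $v=z$ gives $f|_W(zu)=z\bar f(u)$. Then, for $u$ of degree $<t-1$, compare the two expressions for $f|_W(zu)$, namely $\bar f(zu)$ and $z\bar f(u)$. Hence $\bar f$ commutes with multiplication by $z$ wherever this is defined, and so $\bar f(z^i)=z^i\bar f(1)$. Writing $p=\bar f(1)$, the degree constraint $\deg(z^{t-1}p)<t$ together with $t\ge2$ forces $p$ to be a constant $\lambda$. Here $r\ge2$ is used to have $z\in V$. Then $f|_W$ is multiplication by $\lambda$ on the span of all $vu$, which is $W$.

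So $f-\lambda\,\mathrm{id}$ kills $W$ and lands in $W$, that is, it is given by an arbitrary $k$-linear map $T\to W$. Every such map is $R$-linear, because $\mathfrak m$ acts as zero on its source component in the relevant sense: I need to check $h(vu)=0=vh(u)$, using that $vh(u)$ is computed in the socle, where $\mathfrak m$ acts by zero. This yields $\End_R(E)\cong k\ltimes\Hom_k(T,W)$ with square-zero ideal, since composing two such maps factors through $W\to T$, which is zero.

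Finally, sharpness over algebraically closed $k$ follows from comparing $\ell_R(E)$ with the bound $r+2t-1$ in Theorem~\ref{thm:squarezero-bound}.
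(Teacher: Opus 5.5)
Your proposal is correct and follows essentially the same route as the paper. Both proofs use nonsingularity of polynomial multiplication fed into Theorem~\ref{thm:squarezero-bilinear}, Nakayama and dimension counts for $\mu_R(E)$ and $\ell_R(E)$, and the endomorphism computation in which $\mathfrak m$-linearity with $v=1$ and $v=z$ forces $\alpha(z^j)=z^j\alpha(1)$ and the degree constraint makes $\alpha(1)$ a scalar; both then use locality of $k\ltimes\Hom_k(T,W)$ for indecomposability and the length comparison $(r+1)t-(r+2t-1)=(r-1)(t-1)>0$ for nonprojectivity. Your additions are minor and valid: a direct socle computation, an explicit check that every $\gamma\in\Hom_k(T,W)$ gives an $R$-linear map, and an alternative socle-dimension argument against freeness.
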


\begin{proof}
Polynomial degrees ensure $vu\in W$, and a direct multiplication
shows that the displayed action is associative and unital.
If $(a,v)$ kills $E$, its action on $T$ gives $a=0$, and its action
on $(1,0)$ then gives $v=0$. Hence $E$ is faithful.
Because $k[z]$ is a domain, $vu\ne0$ for nonzero $v\in V$ and
$u\in T$. Thus an element $(u,w)$ with $u\ne0$ has zero
annihilator, whereas $(0,w)\ne0$ has annihilator $\mathfrak m$.
Theorem~\ref{thm:squarezero-bilinear} proves the required property
and the socle equality.
Every monomial of degree at most $r+t-2$ is a product of one of degree
at most $r-1$ and one of degree at most $t-1$.
Consequently $\mathfrak mE=0\oplus W$.
Nakayama's lemma gives $\mu_R(E)=\dim_kT=t$, and dimension gives
$\ell_R(E)=t+(r+t-1)$.

An $R$-endomorphism preserves $0\oplus W$ and therefore has the form
\[
 f(u,w)=(\alpha(u),\gamma(u)+\delta(w)),
\]
where $\alpha:T\to T$, $\gamma:T\to W$, and $\delta:W\to W$
are $k$-linear and satisfy $\delta(vu)=v\alpha(u)$.
Taking $v=1$ gives $\delta|_T=\alpha$.
Taking $v=z$ and $u=z^j$ for $0\le j\le t-2$ gives
$\alpha(z^{j+1})=z\alpha(z^j)$.
Hence $\alpha(z^j)=z^j\alpha(1)$ for $0\le j<t$.
Since $z^{t-1}\alpha(1)$ must belong to $T$, the polynomial
$\alpha(1)$ is a scalar $c\in k$.
It follows that $\alpha=c\operatorname{id}_T$ and, since the products
$vu$ span $W$, $\delta=c\operatorname{id}_W$.
There is no restriction on $\gamma$.
Composition is
$(c,\gamma)(d,\eta)=(cd,c\eta+d\gamma)$, which proves the asserted
ring isomorphism. This ring is local, so it has no idempotents other
than zero and one. A nontrivial decomposition of $E$ would give a
nontrivial idempotent endomorphism. Thus $E$ is indecomposable.

A finitely generated projective module over the local ring $R$ is free.
If $E$ were free, its rank would be $t$ and its length would be
$(r+1)t$. But
$(r+1)t-(r+2t-1)=(r-1)(t-1)>0$.
Therefore $E$ is not projective.
\end{proof}

The cyclic embedding criterion does not imply torsionlessness of the
whole module, even for a finitely generated faithful module. Recall that
an $R$-module is \emph{torsionless} if its homomorphisms to $R$ separate
its elements, equivalently if it embeds in a direct product of copies of $R$.

\begin{corollary}\label{cor:cyclic-not-torsionless}
For the modules $E=T\oplus W$ in
Theorem~\ref{thm:sharp-indecomposable-family}, every cyclic submodule
embeds in $R$, but $E$ is not torsionless. In fact,
\[
 \Hom_R(E,R)\cong\Hom_k(T,V),\qquad
 \bigcap_{f\in\Hom_R(E,R)}\ker f=0\oplus W.
\]
\end{corollary}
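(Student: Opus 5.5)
Three things need proof: every cyclic submodule embeds in $R$, the description of $\Hom_R(E,R)$, and the computation of the common kernel. The common kernel being nonzero then shows $E$ is not torsionless.

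\emph{Cyclic submodules.} By the proof of Theorem~\ref{thm:sharp-indecomposable-family}, every nonzero $x\in E$ has annihilator $0$ or $\mathfrak m$. If $\ann_R(x)=0$, then $Rx\cong R$. If $\ann_R(x)=\mathfrak m$, then $Rx\cong k$, and $k$ embeds in $R$ via $1\mapsto(0,v)$ for any nonzero $v\in V$. The zero submodule embeds trivially.

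\emph{Description of $\Hom_R(E,R)$.} Let $f\colon E\to R$ be $R$-linear. Since $W=\mathfrak mE$, we get $f(0\oplus W)\subseteq\mathfrak m R=\mathfrak m$, so $f$ maps $0\oplus W$ into $0\ltimes V$.

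Write $f(u,0)=(\lambda(u),g(u))$ with $\lambda\colon T\to k$ and $g\colon T\to V$ both $k$-linear. Linearity under $(0,v)$ gives
\[
 f(0,vu)=(0,v)\,f(u,0)=(0,\lambda(u)v).
\]
Now suppose some $u$ has $\lambda(u)\ne0$, and fix such a $u$. The vector $vu$ lies in $W$, and $f$ restricted to $0\oplus W$ is a single $k$-linear map $h\colon W\to V$. So $h(vu)=\lambda(u)v$ for all $v\in V$.

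Take $u'=zu$ when $u$ has degree below $t-1$; otherwise first replace $u$ by a suitable element. The aim is to compare $h(v\cdot zu)$ with $h((zv)\cdot u)$. The relation $h(vu)=\lambda(u)v$ for all $u,v$ then says that $h(p)$ depends only on the product $p$, while the right side is "$\lambda(u)v$". For $t\ge2$, the element $z^{r-1}\in W$ factors as $z^{r-1}\cdot1$ and as $z^{r-2}\cdot z$. These factorizations force $\lambda(1)z^{r-1}=\lambda(z)z^{r-2}$, hence $\lambda(1)=\lambda(z)=0$. Iterating this comparison over all monomials shows $\lambda=0$.

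This factorization argument, which kills $\lambda$, is the step I expect to be the main obstacle. The cleanest route is to use that each monomial $z^{j}$ with $j\le r+t-2$ has at least two factorizations whenever $r,t\ge2$. Combined with linear independence of the monomials in $V$, this forces every $\lambda(z^i)=0$.

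Once $\lambda=0$, the identity $f(0,vu)=(0,\lambda(u)v)$ gives $f(0\oplus W)=0$. Hence $f(u,w)=(0,g(u))$, and any $k$-linear $g\colon T\to V$ defines an $R$-map, because $\mathfrak m$ kills both $T$-images and $0\ltimes V$. This gives $\Hom_R(E,R)\cong\Hom_k(T,V)$.

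\emph{Common kernel.} Every $f$ vanishes on $0\oplus W$. Conversely, for $u\ne0$ choose $g$ with $g(u)\ne0$, so $(u,w)\notin\ker f$. Thus the common kernel is exactly $0\oplus W$, which is nonzero, so homomorphisms to $R$ do not separate the points of $E$ and $E$ is not torsionless.
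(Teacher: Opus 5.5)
Your proof is correct and follows essentially the paper's route: you derive $h(vu)=\lambda(u)v$ (the paper's $\delta(vu)=\alpha(u)v$), compare two factorizations of one monomial and use linear independence of monomials in $V$ to get $\lambda=0$, then conclude using that products span $W$ and that $g$ can be chosen freely. One small correction: not every $z^j$ has two factorizations ($z^0$ and $z^{r+t-2}$ have only one), so state the key step as the paper does, comparing $z^{j+1}=1\cdot z^{j+1}=z\cdot z^j$ for $0\le j\le t-2$; this kills $\lambda(z^j)$ and $\lambda(z^{j+1})$, and since $t\ge2$ these cover a basis of $T$.
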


\begin{proof}
An element outside $0\oplus W$ generates a copy of $R$.
A nonzero element of $0\oplus W$ generates a copy of $k$, which
embeds in any nonzero one-dimensional subspace of $\mathfrak m$.
This proves the cyclic assertion.

Any $R$-homomorphism $f:E\to R=k\oplus V$ has the form
$f(u,w)=(\alpha(u),\gamma(u)+\delta(w))$, with $k$-linear
maps $\alpha:T\to k$, $\gamma:T\to V$, and $\delta:W\to V$,
satisfying $\delta(vu)=\alpha(u)v$.
For $0\le j\le t-2$, the choices $(v,u)=(1,z^{j+1})$ and
$(z,z^j)$ give
$\alpha(z^{j+1})1=\delta(z^{j+1})=\alpha(z^j)z$.
The linear independence of $1$ and $z$ in $V$ forces
$\alpha(z^j)=\alpha(z^{j+1})=0$; these pairs exhaust a basis of $T$.
Thus $\alpha=0$, and the products $vu$ span $W$, so $\delta=0$.
Conversely, every $\gamma\in\Hom_k(T,V)$ gives such a homomorphism.
These maps separate the elements of $T$, but kill all of $W$, proving
both equalities and the failure of torsionlessness.
\end{proof}

\begin{example}\label{ex:five-dimensional-squarezero}
Taking $r=t=2$ gives $R=k[x,y]/(x,y)^2$ and a five-dimensional
module with basis $u_0,u_1,w_0,w_1,w_2$ and action
\[
 \begin{aligned}
 xu_0&=w_0,& xu_1&=w_1,\\
 yu_0&=w_1,& yu_1&=w_2,
 \end{aligned}
 \qquad xw_i=yw_i=0\quad(0\le i\le2).
\]
This module is faithful, indecomposable, generated by two elements,
and not projective. Its nonzero element annihilators are exactly
$0$ and $(x,y)$. Over an algebraically closed field it has the least
possible length among faithful annihilator multiplication modules
with $\dim_k\mathfrak m=2$ and
$\dim_k(E/\Soc_R(E))=2$.
\end{example}

\section*{Conclusions}

The cyclic embedding criterion separates the defining property from any
finite-generation assumption on the whole module. It yields different
ring classifications for faithful modules and for arbitrary modules, and
it identifies the role of finite annihilator detection in localization,
support, torsion, and graph rigidity.
The reduced finite-minimal-prime theorem and the square-zero bilinear
criterion describe two complementary settings: regular-element torsion
controls the reduced case, whereas nonsingularity of the multiplication
map controls the square-zero case.

The counterexamples delimit these results. Maximal localizations alone
do not give the global property for arbitrary modules over a Noetherian
ring, regular-torsion-freeness alone is insufficient with infinitely many
minimal primes, and a quasi-Frobenius base ring does not make all of its
nonfaithful modules annihilator multiplication.
The indecomposable square-zero family attains the algebraically closed
length bound and shows that the theory includes substantial nonprojective
module constructions.


\begin{thebibliography}{9}

\bibitem{ElKhalfaouiEtAl2021}
R. El Khalfaoui, N. Mahdou, P. Sahandi, and N. Shirmohammadi,
Amalgamated modules along an ideal,
\emph{Commun. Korean Math. Soc.} \textbf{36} (2021), no.~1, 1--10.
doi:10.4134/CKMS.c200064.

\bibitem{KocAMM}
S.~Ko\c{c}, \emph{On annihilator multiplication modules},
J.~Algebra Appl., accepted; preprint
\href{https://arxiv.org/abs/2510.03791v2}{arXiv:2510.03791v2} (2026).

\bibitem{Lam99}
T.~Y. Lam, \emph{Lectures on Modules and Rings},
Graduate Texts in Mathematics, Vol.~189,
Springer-Verlag, New York, 1999.
\href{https://doi.org/10.1007/978-1-4612-0525-8}{doi:10.1007/978-1-4612-0525-8}.

\bibitem{WK24}
F.~Wang and H.~Kim,
\emph{Foundations of Commutative Rings and Their Modules}, 2nd ed.,
Algebra and Applications, Vol.~31,
Springer, Singapore, 2024.
\href{https://doi.org/10.1007/978-981-97-5284-3}{doi:10.1007/978-981-97-5284-3}.


\bibitem{JTK22}
C.~Jayaram, \"U.~Tekir, and S.~Ko\c{c},
\emph{On Baer modules}, Rev.~Uni\'on Mat.~Argentina \textbf{63} (2022),
no.~1, 109--128.
\href{https://doi.org/10.33044/revuma.1741}{doi:10.33044/revuma.1741}.

\bibitem{Storrer69}
H.~H. Storrer, \emph{A note on quasi-Frobenius rings and ring epimorphisms},
Canad.~Math.~Bull. \textbf{12} (1969), no.~3, 287--292.
\href{https://doi.org/10.4153/CMB-1969-036-9}{doi:10.4153/CMB-1969-036-9}.

\bibitem{Hartshorne77}
R.~Hartshorne, \emph{Algebraic Geometry},
Graduate Texts in Mathematics, Vol.~52,
Springer-Verlag, New York--Heidelberg, 1977.
\href{https://doi.org/10.1007/978-1-4757-3849-0}{doi:10.1007/978-1-4757-3849-0}.

\end{thebibliography}
\end{document}